%% file: llg_bdf2_a_priori_estimates.tex
\documentclass[11pt,reqno]{NumPDEsArticle}

\usepackage{NumPDEsMacros}

\usepackage{csquotes}
\usepackage{enumitem}
\usepackage{amsmath,amssymb}
\usepackage{empheq}
\usepackage{graphicx}
\usepackage{tikz}
\usepackage{caption}
\usepackage{subcaption}
\usepackage[capitalize,nameinlink]{cleveref}
\usepackage{hyperref}
\usepackage{cleveref}

\DeclareMathAlphabet\mathbfcal{OMS}{cmsy}{b}{n}

\title[BD2-type integrator for LLG, part II]{BDF2-type integrator for Landau-Lifshitz-Gilbert equation in micromagnetics: a-priori error estimates}

\author{Michele Ald\'e}
\email{Michele.Alde@asc.tuwien.ac.at \quad\rm (corresponding author)}

\author{Michael Feischl}
\email{Michael.Feischl@asc.tuwien.ac.at}

\author{Dirk Praetorius}
\email{Dirk.Praetorius@asc.tuwien.ac.at}

\address{TU Wien, Institute of Analysis and Scientific Computing, Wiedner Hauptstra\ss{}e 8--10, 1040 Wien, Austria}

\thanks{This research was funded by the Austrian Science Fund (FWF) projects \href{https://www.doi.org/10.55776/F65}{10.55776/F65} (SFB F65 ``Taming complexity in PDE systems''), and \href{https://www.doi.org/10.55776/P33216}{10.55776/P33216} (stand-alone project P33216 ``Computational nonlinear PDEs'') and through the ERC grant \href{https://www.doi.org/10.3030/101125225}{10.3030/101125225} ``New frontiers in optimal adaptivity'' of Michael Feischl. Additionally, the \href{https://www.vsmath.at}{Vienna School of Mathematics} supports Michele Ald\'e.}

\newtheorem*{proposition*}{Proposition}

\newcommand{\mm}{\boldsymbol{m}}
\newcommand{\ee}{\boldsymbol{e}}
\newcommand{\heff}{\boldsymbol{h}_{\rm eff}}
\newcommand{\ff}{\boldsymbol{f}}
\newcommand{\vv}{\boldsymbol{v}}
\newcommand{\zz}{\boldsymbol{z}}
\newcommand{\dd}{\boldsymbol{d}}
\newcommand{\qq}{\boldsymbol{q}}
\newcommand{\rr}{\boldsymbol{r}}
\newcommand{\sss}{\boldsymbol{s}}
\newcommand{\bg}{\boldsymbol{g}}
\newcommand{\oomega}{\boldsymbol{\omega}}
\newcommand{\bD}{\boldsymbol{D}}
\newcommand{\pphi}{\vvarphi}
\newcommand{\bphi}{\pphi}
\newcommand{\bpsi}{\boldsymbol{\psi}}
\newcommand{\lamex}{\lambda_{\rm ex}}
\newcommand{\bT}{\boldsymbol{T}}
\newcommand{\bTh}{\boldsymbol{T}_{\!h}}
\newcommand{\bL}{\mathbf{L}}
\newcommand{\bH}{\mathbf{H}}
\newcommand{\bP}{\mathbf{P}}
\newcommand{\bPh}{\mathbf{P}_{\!h}}
\newcommand{\bR}{\mathbf{R}}
\newcommand{\bW}{\mathbf{W}}
\newcommand{\bI}{\mathbf{I}}
\newcommand{\OT}{\Omega_T}
\newcommand{\mhat}{\widehat{\mm}}
\newcommand{\mhatp}{\widehat{\mm}_{h\tau}^+}
\newcommand{\mstarh}{{\mm}_{\star,h}}
\newcommand{\mstarhhat}{\widehat{\mm}_{\star,h}}
\newcommand{\vstarh}{\vv_{\star,h}}

\newcommand{\scalarproductO}[2]{\langle #1, #2 \rangle_{\Omega}}

\newcommand{\normO}[2][]{\norm[#1]{#2}_{\Omega}}
\newcommand{\normOT}[2][]{\norm[#1]{#2}_{\Omega_T}}
\newcommand{\bN}{\mathbf{N}}

\renewcommand{\set}[3][]{#1\{#2 \,:\, #3#1\}}
\newcommand{\xx}{\boldsymbol{x}}
\newcommand{\uu}{\boldsymbol{u}}
\def\III{\boldsymbol{\II}}
\def\SSS{\boldsymbol{\SS}}
\def\pphi{\boldsymbol{\phi}}
\def\vvarphi{\boldsymbol{\varphi}}

\newcounter{constantcnt}
\makeatletter

\newcommand{\newconst}[1]{%
    \@ifundefined{cst@#1}{%
        \@ifundefined{ifmeasuring@}{%
            \stepcounter{constantcnt}%
            \expandafter\xdef\csname cst@#1\endcsname{\theconstantcnt}%
            C_{\theconstantcnt}%
        }{%
            \ifmeasuring@
                C_{0}
            \else
                \stepcounter{constantcnt}%
                \expandafter\xdef\csname cst@#1\endcsname{\theconstantcnt}%
                C_{\theconstantcnt}%
            \fi
        }%
    }{%
        C_{\@nameuse{cst@#1}}%
    }%
}

\newcommand{\const}[1]{%
    \@ifundefined{cst@#1}{%
        \mathbf{C_{??}}
    }{%
        C_{\@nameuse{cst@#1}}%
    }%
}
\makeatother

\begin{document}
	
\maketitle

\begin{abstract}
We consider the Landau--Lifshitz--Gilbert equation (LLG), which models time-dependent micromagnetic phenomena. 
We analyze a fully discrete scheme that combines first-order finite elements in space with a BDF2 method in time. 
The method requires the solution of only one linear system of equations per time step and does not enforce the pointwise unit-length constraint of the magnetization.
While unconditional weak convergence has been analyzed in an earlier work, we now prove optimal-order convergence rates under sufficient regularity assumptions on the exact solution and the external field. 
In combination with our previous work, this establishes the first linear higher-order-in-time integrator that converges both to weak and strong solutions of LLG. Numerical experiments confirm first-order convergence in space and second-order convergence in time.
\end{abstract}

\section{Introduction}
Time-dependent micromagnetic phenomena are typically modeled by the phenomenological Landau--Lifshitz--Gilbert equation (LLG), a nonlinear time-dependent PDE that describes the evolution of the vector field magnetization $\mm(t,\xx)\in \R^3$ in a ferromagnetic body $\Omega \subset \R^d$, $d=2,3$.
The magnetization represents the magnetic moment per unit volume and can be interpreted as the continuum counterpart of discrete magnetic spins.
It is a fundamental physical property of the model that, for constant temperatures below the Curie temperature, the modulus of the magnetization remains constant, i.e., $|\mm(t,\xx)| = M_s$, where $M_s>0$ denotes the saturation magnetization.
The equation plays a central role in micromagnetics and underlies many areas such as sensors, actuators, memory storage devices such as hard disk drives and magnetic recording~\cite{LandauA1935,BrownB1962,BrownB1963,hubert1998magnetic}.

The analytical theory of LLG is by very well-developed.
We refer to~\cite{Alouges_Soyeur} for the global-in-time existence of weak solutions, to
\cite{csg1998,cf2001,cimrak2005,cimrak2007,ft2017b,MR4646547} for local-in-time existence of smooth strong solutions, and to
\cite{ds2014,dip2020} for weak-strong uniqueness, i.e., if a strong solution exists until a certain time, then every weak solution with the same initial data also coincides with it until that time. In particular, strong solutions are unique if they exist.

From the point of view of numerical analysis, discretization schemes of LLG may be grouped into schemes that converge, at a given rate, to sufficiently smooth strong solutions under sufficient regularity assumptions on the exact solution~\cite{prohl2001,bs2006,MR3273326,MR3542789,feischl,MR4454924,MR4362832,MR4673464,MR4617914} and schemes that only guarantee weak convergence, along a subsequence, to weak solutions of LLG~\cite{aj2006,bkp2008,bp2006,Alouges_2008,ruggeri2022}.
Some works also turn attention to coupled LLG systems~\cite{bppr2014,bpp2015,dpprs2020,akt2012,bsffgppr2014,mrs2018,hpprss2019,dppr2023,guo2026NSLLG}, employing an implicit-explicit (IMEX) discretization to reduce the computational cost.

One of the first works that proved unconditional weak convergence to weak solutions of LLG is \cite{Alouges_2008}, where a projection-based first-order in time tangent-plane integrator is proposed and analyzed. The main idea of the scheme is to exploit inherited properties of the equation to turn the non-linear PDE in the variable $\mm$ into a linear variational formulation for the variable $\vv \coloneqq \partial_t\mm$, so that only one linear system is solved at every time-step of the algorithm. This integrator relies on first-order finite elements in space and weak convergence to weak solutions of LLG was proved. Being a linear scheme for a non-linear equation, this method has therefore inspired a large number of subsequent works.
In particular, \cite{akst2014,dpprs2020} propose a variant of the algorithm which is still unconditionally weakly convergent to weak solutions and empirically of second order in time, but the corresponding \textit{a-priori} error analysis for smooth strong solutions remains open. More recently, the scheme was even extended to higher-order finite element space discretization and higher-order BDF$\ell$ ($\ell=1,\dots,5$) discretization in time in \cite{feischl}, where \textit{a-priori} error estimates in the presence of sufficiently smooth strong solutions were proved, while unconditional weak convergence was not addressed there and indeed remains open. Other higher-order discretizations in time were recently proposed in \cite{XieWang2025, Xie2025}.

So far, the first-order tangent-plane scheme is the only integrator which converges both weakly to a weak solution of LLG \cite{Alouges_2008} and, under suitable regularity assumptions on the exact solution, with optimal convergence rates to the exact solution \cite{AnLiSun2024}. A similar weak-and-strong convergence picture is available also to a projection-free variant of the scheme, which adapts ideas for projection-free integrators for harmonic maps \cite{bartels2016} to LLG; see \cite{Spin-polarized} for the weak convergence and an IMEX time-stepping and \cite{ft2017} for the \textit{a-priori} analysis. In both cases, the schemes are first-order in both time and space. Related work includes \cite{Li2026}.

This work is the second of a series of three papers, where we aim to formulate and analyze these properties for an integrator which has higher-order accuracy in time, employing the BDF2 time-stepping discretization.  Based on the connection between LLG and the harmonic map heat flow, our previous work~\cite{part1} adapted~\cite{abp24}, which proved unconditional weak convergence for a projection-free BDF2 scheme applied to the harmonic map heat flow. This paper is complementary to~\cite{part1} in the following sense:~\cite{part1} proves unconditional weak convergence to weak solutions, while the present paper proves optimal-order \textit{a-priori} error estimates for a sufficiently regular strong solution of LLG, if the latter exists and is sufficiently smooth. To the best of our knowledge, this is the first second-order in time linear integrator for LLG for which this full picture is available. While these two papers focus on the mathematical core difficulties and thus restrict to a simplified effective field consisting only of exchange and exterior field, they will be completed by an upcoming preprint~\cite{part3}, where the integrator is extended via an IMEX strategy from the mathematical core model to the more physically relevant fully effective field which includes further energy contributions.

The analysis in the present paper combines ideas from~\cite{feischl} and~\cite{bkw2024}.
A key difference to~\cite{feischl} lies in the construction of the discrete tangent space.
In~\cite{feischl}, the fully discrete scheme employs a variational definition of the discrete tangent space $\widetilde{\bT}_h(\mm)$ in which the orthogonality constraint is enforced only weakly, namely in the averaged sense
\(
\Pi_h(\mm \cdot \bphi_h) = 0
\)
with the $L^2$-orthogonal projection $\Pi_h$ onto the finite element tangent space $\widetilde{\bT}_h(\mm)$.
In contrast, our algorithm \cite{part1} follows the seminal works \cite{bp2006,Alouges_2008} and employs a more efficient node-wise variant of the discrete tangent space $\bT_h(\mm)$ in which the orthogonality constraint is enforced pointwise at every node of the triangulation. This choice is more efficient, but also tailored to lowest-order finite elements in space.
In Section~\ref{sec: preliminary results}, we show that the associated orthogonal projection operator onto $\bT_h(\mm)$ satisfies suitable approximation and stability properties that are crucial for the convergence analysis.
This node-wise construction of the discrete tangent space has also been used in~\cite{bkw2024} for the numerical analysis of the harmonic map heat flow into spheres, a problem that is closely related to the LLG equation.
However, the scheme in~\cite{bkw2024} is only first order in time, whereas in the present work we extend this approach to a second-order accurate BDF2 time discretization. 

The resulting scheme of the present paper uses a predictor-corrector structure with a BDF2 corrector step and a node-wise orthogonal discrete tangent space.
As in~\cite{Alouges_2008} and the subsequent literature, each step of the algorithm leads to a sequence of approximations $\vv_h^j\approx \partial_t\mm(t_{j+1})$ at discrete times $t_{j+1}$, which are then employed to construct $\mm_h^{j+1}\approx \mm(t_{j+1})$.
The analysis follows a classical argument, where consistency and stability imply convergence with optimal rates. However, ensuring that the intrinsic physical properties of LLG are preserved by the proposed structure-preserving scheme makes the analysis highly non-trivial.

\textbf{Outline.}
%
The outline of the present work reads as follows: 
In Section~\ref{section:model}, we formulate the LLG equation \eqref{eq: LLG system} in the presence of a simplified effective field consisting only of an applied exterior field and the leading-order exchange contribution. Sections~\ref{section:discretization}--\ref{section:numerical integrator} present the proposed numerical integrator in detail (Algorithm~\ref{alg: full discr}). In Section~\ref{section: weak convergence}, we recall the definition of weak solutions to the LLG equation from \cite{Alouges_Soyeur}, along with Theorem~\ref{thm: main result part1} from \cite{part1}, which establishes weak convergence of the scheme to weak solutions of LLG \eqref{eq: LLG system}. The main result of this paper is stated in Section~\ref{sec: main result} (Theorem~\ref{thm: main result}), where we prove optimal convergence rates for the proposed integrator under suitable regularity assumptions on the exact solution. After recalling some preliminary lemmas in Section~\ref{sec: preliminary results}, the convergence proof of Theorem~\ref{thm: main result} is divided into two parts: consistency in Section~\ref{sec: consistency} and stability together with the main argument in Section~\ref{sec: stability and main proof}. Finally, Section~\ref{section:numerics} provides numerical experiments that support the theoretical findings.

\textbf{General notation.}
%
Throughout this paper, let $\Omega\subset \R^d$ with $d=2,3$ be a bounded domain. We adhere to the standard notation for Lebesgue, Sobolev, and Bochner spaces, using bold letters to indicate spaces of vector valued functions, e.g., $\bL^2(\Omega) \coloneqq L^2(\Omega,\R^3)$ and $\bH^1(\Omega)\coloneqq H^1(\Omega,\R^3)$.
Given $\uu, \vv \in \bL^2(\Omega)$, we write 
\begin{align*}
 \langle\uu,\vv\rangle_\Omega 
 \coloneqq 
 \langle\uu,\vv\rangle _{\bL^2(\Omega)}\coloneqq \int_\Omega\uu(\xx)\cdot\vv(\xx)\,\d\xx
\end{align*}
for the $\bL^2$-scalar product in $\Omega$ and $\normO{\,\cdot\,}$ for the induced norm, i.e., $\normO{\uu} \coloneqq \langle\uu,\uu\rangle_\Omega^{1/2}$.
Similar notation is applied for the space-time domain $\OT\coloneqq \Omega \times [0,T]$ and the corresponding scalar product $\langle\cdot,\cdot\rangle_{\OT}$ and norm $\normOT{\,\cdot\,}$. To abbreviate the notation for norms in space-time Bochner spaces, we write, e.g., $\norm{\cdot}_{L^\infty(\bH^1)} \coloneqq \norm{\cdot}_{L^\infty(0,T;\bH^1(\Omega))}$ and $\norm{\cdot}_{L^2(\bL^2)} \coloneqq \norm{\cdot}_{L^2(0,T;\bL^2(\Omega))}$.
 All other norms will be explicitly stated.
We write $A\lesssim B$ to indicate that $A\leq C \, B$, and the dependency of the hidden constant $C > 0$ is either clear from the context or carefully specified. All constant dependencies are tracked in the proofs, and the final dependencies of the constants in the main results are explicitly stated.
Finally, $A \simeq B$ abbreviates $A \lesssim B \lesssim A$.
\section{Numerical BDF2 integrator and main result}\label{sec: main result}

\subsection{Mathematical model}\label{section:model}

Let $\Omega\subset \R^d$, $d=2,3$ be a bounded Lipschitz domain describing a ferromagnetic body. We consider the (simplified) effective field $\heff=\lamex^2\Delta\mm + \ff$, corresponding to the total Gibbs free energy 
\begin{equation}\label{eq: Gibbs free energy}
	\mathcal{E}(\mm,\ff) = \frac{\lamex^2}{2}\int_\Omega |\nabla \mm|^2 \, \d \xx - \int_\Omega \ff \cdot \mm \, \d \xx,	
\end{equation}
where $\ff\colon \Omega_T\to\R^3$ is a given external field and $\lamex>0$ is the so-called exchange constant. While the effective field is simplified with respect to the general physical model (see, e.g., \cite{mrs2018,hpprss2019,davoli2020}), we stress that, mathematically, the leading-order term and hence the main mathematical challenge is indeed included. The evolution of the normalized magnetization $\mm\colon \Omega_T\to\mathbb{S}^2\coloneqq\{\xx\in\R^3 \colon |\xx|=1\}$ is governed by the Landau--Lifshitz--Gilbert equation (LLG): Given a final time $T>0$, we seek $\mm$ satisfying
\begin{subequations}\label{eq: LLG system}
	\begin{empheq}[left=\empheqlbrace]{align}
	 &\partial_t\mm = -\mm\times\heff + \alpha \mm\times\partial_t\mm 
	  &&\hspace{-2em} \text{in} \quad \Omega \times [0,T] \label{eq: LLG}, \\
	 &\partial_{\vec{n}}\mm = 0 
	  &&\hspace{-2em} \text{in} \quad \partial\Omega \times [0,T] \label{eq:subeq_b}, \\
	 &\mm(0) = \mm^0 
	  &&\hspace{-2em} \text{in} \quad \Omega \label{eq:subeq_c},
	\end{empheq}
\end{subequations}
where $\alpha>0$ is the so-called Gilbert damping parameter. 

Multiplying	\eqref{eq: LLG} by $\mm$, we note that $\partial_t|\mm|^2 = 2\mm\cdot\partial_t\mm = 0,$ in $\OT$. If the initial condition \eqref{eq:subeq_c} satisfies $|\mm^0|=1$, then it is PDE-inherent that the solution $\mm$ satisfies $|\mm|=1$ in $\OT$. Furthermore, the time-derivative $\partial_t\mm$ is pointwise orthogonal to $\mm$ in $\OT$. This built-in orthogonality suggests introducing the tangent space 
\begin{equation}\label{eq: tangent space}
\bT(\mm(t))\coloneqq\{\vvarphi\in\bH^1(\Omega)\colon \mm(t)\cdot\vvarphi=0 \text{ a.e.}\}\ni \partial_t\mm(t) \quad \text{at each time} \quad t\in~[0,T].
\end{equation}

\subsection{Discretization}\label{section:discretization}

We partition the time domain $[0,T]$ into $N$ intervals of length $\tau \coloneqq T/N$ with time steps $t_j \coloneqq j\tau$ for all $j = 0, \ldots, N$.
Given a sequence of functions $\{\mm^j\}_{j=0}^N$ associated with these time steps, we define the discrete time derivative
\begin{align*}
 \d _t{\mm}^{j+1} 
 \coloneqq 
 \frac{\mm^{j+1}-\mm^j}{\tau}
\end{align*}
and the second-order discrete time derivative
\begin{equation}\label{eq: second order discrete time derivative}
 \d_t^2{\mm}^{j+1}
 \coloneqq 
 \frac{\d _t \mm^{j+1}-\d _t \mm^j}{\tau}
 =
 \frac{\mm^{j+1}-2\mm^j+\mm^{j-1}}{\tau^2}.
\end{equation}
For the spatial discretization, we employ a $\gamma$-quasi-uniform family of conforming simplicial meshes $\mathcal{T}_h$ of $\Omega$ with mesh size $h > 0$, i.e., there exists a constant $\gamma > 0$ such that 
\begin{align*}
 \gamma^{-1} h \le |K|^{1/d} \le \text{diam}(K) \le h
 \quad \text{for all } K \in \mathcal{T}_h \text{ and all } h > 0.
\end{align*}
We use the finite element space of continuous, $\mathcal{T}_h$-piecewise affine vector-valued functions
\begin{align*}
 \boldsymbol{\mathcal{S}}^1(\mathcal{T}_h) \coloneqq \set{\vv_h \in C(\Omega; \R^3)}{\forall K \in \mathcal{T}_h: \vv_h|_{K} \text{ is affine}},
\end{align*}
and denote as $\mathcal{N}_h$ the set of nodes of the triangulation $\mathcal{T}_h$.

To mimic the unit-length constraint at a discrete level, we introduce
\begin{align*}
 \boldsymbol{\mathcal{M}}_h 
 \coloneqq 
 \set{\bphi_h\in\SSS^1(\TT_h) }{|\bphi_h(\vec{z})|=1 \text{ for all } \vec{z}\in\NN_h}
\end{align*}
as the set of discrete magnetizations satisfying the unit-length constraint at all nodes of $\mathcal{T}_h$.
Analogously, also the discrete orthogonality constraint is imposed at every node $\zz\in \mathcal{N}_h$ via the discrete tangent space, defined at $\pphi_h\in\SSS^1(\TT_h)$ as 
\begin{equation*}
 \bT_h(\pphi_h) 
 \coloneqq 
 \set[\big]{\bpsi_h\in\SSS^1(\TT_h)}{\bphi_h(\vec{z}) \cdot \bpsi_h(\vec{z}) = 0 \text{ for all } \vec{z} \in \NN_h}.
\end{equation*}
\subsection{Numerical integrator}\label{section:numerical integrator}
Under the constraint $|\mm|=1$, it is possible to prove that~\eqref{eq: LLG} can be rewritten as 
\begin{equation}\label{eq: alternative LLG}
 \alpha\partial_t\mm + \mm\times\partial_t\mm = \heff - (\mm\cdot\heff)\mm,
\end{equation}
see, e.g., \cite{goldenits2012konvergente} for the elementary calculation. To derive the discrete variational formulation for the numerical integration of~\eqref{eq: LLG system}, we note that the time derivative $\partial_t\mm(t)\in\bT(\mm(t))$ satisfies, for all test functions $\vvarphi\in\bT(\mm(t))$,
\begin{equation}\label{eq: weak formulation}
	\alpha\scalarproductO{\partial_t\mm(t)}{\vvarphi} + \scalarproductO{\mm(t)\times\partial_t\mm(t)}{\vvarphi} + \lamex^2\scalarproductO{\nabla\mm(t)}{\nabla \vvarphi} = \scalarproductO{\ff(t)}{\vvarphi},
\end{equation}
which follows from \eqref{eq: alternative LLG} by multiplying LLG with $\vvarphi$ and integrating over $\Omega$.

The following Algorithm~\ref{alg: full discr} states the time-marching scheme from \cite{part1}, which exploits a finite element discretization of~\eqref{eq: weak formulation} for the time-derivative $\vv \coloneqq \partial_t\mm\in\bT(\mm)$ and uses second-order backward differences (BDF2) in time. We use discrete initial data $\mm_h^0\approx\mm^0$ and assume that 
\begin{equation}\label{eq: assumption m0}
	\norm{\mm_h^0-\mm^0}_{\bH^1(\Omega)}\le C_0 h
\end{equation}
for a constant $C_0>0$ independent of $h$.
Since BDF2 requires an additional initial value $\mm_h^1 \approx \mm(t_1)$ beyond $\mm_h^0 \approx \mm^0$, we employ one step of the projection-free first-order tangent-plane integrator from \cite{Spin-polarized}. We recall from \cite{ft2017} that the error in the first time step satisfies
\begin{equation}\label{eq: assumption m1}
	\norm{\mm(t_1)-\mm_h^1}_{\bH^1(\Omega)}\le \newconst{feischl_tran} (h+\tau^2).
\end{equation}

Successively, for $j=1,\dots,N-1$, we define a predictor term $\mhat_h^{j+1}\coloneqq 2\mm_h^{j}-\mm_h^{j-1}$, solve a linear system to approximate $\partial_t\mm_h(t_{j+1})\approx\vv_h^{j}\in\bTh(\mhat_h^{j+1})$, and use $\vv_h^{j}$ to approximate $\mm(t_{j+1})\approx \mm_h^{j+1}\in \SSS^1(\mathcal{T}_h)$.
\begin{algorithm}[\texorpdfstring{\cite[Algorithm A]{part1}}{[Algorithm A, part1]}]\label{alg: full discr}
	\textbf{Input:} Conforming mesh $\TT_h$ of $\Omega$, $\mm(0) \approx \mm_h^0\in \boldsymbol{\MM}_h$, $T>0$, $N\in\N$, $\tau\coloneqq T/N$, $t_j\coloneqq j\tau$, and $\ff_h^j\approx \ff(t_j)$ for all $j=0,\dots,N$.
	\begin{enumerate}[label=\rm (\roman*), ref=\rm \roman*]
	\item\label{eq: first step full discr item} 
	For $j=0,$ 
	compute $\vv_h^0\in \bTh(\mm_h^0)$ such that, for all $\pphi_h\in \bTh(\mm_h^0)$,
	\begin{equation}\label{eq: first step full discr}
	 \alpha\scalarproductO{\vv_h^0}{\pphi_h} + \scalarproductO{\mm_h^0\times\vv_h^0}{\pphi_h} + \lamex^2\tau\scalarproductO{\nabla\vv_h^0}{\nabla\pphi_h}
	 =
	 \scalarproductO{\ff_h^1}{\pphi_h} - \lamex^2\scalarproductO{\nabla\mm_h^0}{\nabla\pphi_h}.
	\end{equation}
	\item\label{item: def mm1}
	Set $\mm_h^1\coloneqq \mm_h^0+\tau\vv_h^0$.
	\item\label{item: next steps}  
	For $j=1,\dots,N-1$, repeat the following steps~\eqref{item: def mhat}--\eqref{item: def mmhj}:
	\begin{enumerate}[label=\rm(\alph*), ref=\rm \alph*]
	 \item\label{item: def mhat}
	 Set $\widehat{\mm}_h^{j+1}=2\mm_h^{j}-\mm_h^{j-1}$.
	 \item\label{item: var form}
	 Compute $\vv_h^j\in \bTh(\widehat{\mm}_h^{j+1})$ such that, for all $\pphi_h\in \bTh(\widehat{\mm}_h^{j+1})$,
	 \begin{equation}\label{eq: full discr alg B}
	 \begin{split}
	 \alpha\scalarproductO{\vv_h^j}{\pphi_h} &+ \scalarproductO{\widehat{\mm}_h^{j+1}\times\vv_h^j}{\pphi_h} + \frac{2}{3}\lamex^2\tau\scalarproductO{\nabla\vv_h^j}{\nabla\pphi_h}
	  \\&
	  =
	  \scalarproductO{\ff_h^{j+1}}{\pphi_h} - \frac{1}{3}\lamex^2\scalarproductO{\nabla[4\mm_h^{j}-\mm_h^{j-1}]}{\nabla\pphi_h}.
	 \end{split}
	 \end{equation}
	 \item\label{item: def mmhj}
	 Set $\mm_h^{j+1}\coloneqq \dfrac{4}{3}\mm_h^{j}-\dfrac{1}{3}\mm_h^{j-1}+\dfrac{2}{3}\tau \vv_h^j$.
	\end{enumerate}
	\end{enumerate}
	\textbf{Output:} Sequences $\vv_h^j\approx\partial_t\mm(t_{j+1})$ and $\mm_h^{j+1}\approx\mm(t_{j+1})$ for all $j=0,\dots,N-1$.
\end{algorithm}
Thanks to the Lax--Milgram lemma, the linear systems in \eqref{eq: first step full discr}--\eqref{eq: full discr alg B} admit unique solutions $\vv_h^j$ so that Algorithm~\ref{alg: full discr} is indeed unconditionally well-defined.

\subsection{Unconditional weak convergence to weak solutions}\label{section: weak convergence}

In this subsection, we briefly recall the main convergence result of \cite{part1} for Algorithm~\ref{alg: full discr}, which states unconditional weak convergence to weak solutions of LLG~\eqref{eq: LLG system}. To this end, we recall the definition of weak solutions in the sense of \cite[Definition~1.2]{Alouges_Soyeur}.
{\begin{definition}[Weak solution]\label{def: weak solution}
Let $\mm^0\in\bH^1(\Omega)$ satisfy $|\mm^0|=1$ a.e.\ in $\Omega$.
A function $\mm: \OT \rightarrow \mathbb{R}^3$ is a \text{\rm weak solution} of~\eqref{eq: LLG system} if the following properties {\rm (i)--(iv)} are satisfied:
\begin{enumerate}[label={\rm(\roman*)}, ref={\rm\roman*}]
\item\label{def: weak solution i} 
$\mm \in  \bH^1(\OT) \cap L^{\infty}(0, T ; \bH^1(\Omega))$ and $|\mm|=1$ a.e.\ in $\OT$;
\item\label{def: weak solution ii}
$\mm(0)=\mm^0$ in the sense of traces;
\item\label{def: weak solution iii}
for all $\vvarphi \in \bH^1(\OT)$, it holds that 
$$
\begin{aligned}
& \int_0^T\scalarproductO{\partial_t \mm(t)}{\vvarphi(t)} \d t-\alpha \int_0^T\scalarproductO{\mm(t) \times \partial_t \mm(t)}{\vvarphi(t)} \d t \\
&= \lambda_{\mathrm{ex}}^2 \int_0^T\scalarproductO{\mm(t) \times {\nabla} \mm(t)}{{\nabla} \vvarphi(t)} \d t-\int_0^T\scalarproductO{\mm(t) \times \boldsymbol{f}(t)}{\vvarphi(t)} \d t;
\end{aligned}
$$
\item\label{def: weak solution iv} 
for a.e.\ $t' \in(0, T)$, $\mm$ satisfies the energy inequality 
$$
\begin{aligned}\label{eq: energy inequality}
\EE(\mm(t'), \boldsymbol{f}(t'))+\alpha \int_0^{t'}\normO[\big]{\partial_t \mm(t)}^2 \mathrm{~d} t+\int_0^{t'}\langle\partial_t \boldsymbol{f}(t), \mm(t)\rangle _\Omega \d t\leq \EE(\mm^0, \boldsymbol{f}(0)).
\end{aligned}
$$
\end{enumerate}
\end{definition}}
Using the approximations $\{\mm_h^j\}_{0\le j\le N}$ obtained by Algorithm~\ref{alg: full discr}, it is possible to define the space-time interpolands $\mm_{h\tau}\in\bH^1(\OT)$, $\mm_{h\tau}^{\pm},\mhatp\in L^2(0,T;\bH^1(\Omega))$, and $\ff_{h\tau}^+\in L^2(0,T; \bL^2(\Omega))=\bL^2(\OT)$ as follows:
For all $0\le j\le N-1$ and all $t\in[t_j,t_{j+1}),$ define
	\begin{align*}
		\mm_{h\tau}(t) & \coloneqq \frac{t-t_j}{\tau}\mm_h^{j+1} + \frac{t_{j+1}-t}{\tau}\mm_h^{j},\\
		\mm_{h\tau}^-(t) & \coloneqq \mm_h^j, \\
		\mm_{h\tau}^+(t) & \coloneqq \mm_h^{j+1},\\
		\widehat{\mm}_{h\tau}^+(t) & \coloneqq
			\begin{cases}
				\mhat_h^1\coloneqq\mm_h^0 & \text{for } j=0,\\
				\widehat{\mm}_h^{j+1} & \text{for } 1 \leq j \leq N-1,
			\end{cases} \\
		\ff_{h\tau}^{+}(t) & \coloneqq \ff_h^{j+1}.
	\end{align*}
The following theorem is the main result of \cite{part1} and states that the interpolands $\mm_{h\tau}$, $\mm_{h\tau}^\pm$ and $\mhatp$, converge weakly towards a weak solution of LLG~\eqref{eq: LLG system} as $(h,\tau)\to(0,0)$.
{\begin{theorem}[\texorpdfstring{\cite[Theorem~2]{part1}}{[Theorem~2, part1]}]\label{thm: main result part1}
For $h > 0$, let $\TT_h$ be a family of $\gamma$-quasi-uniform meshes of $\Omega$.
Suppose that $\mm_h^0$ satisfies \eqref{eq: assumption m0} and $\ff\in C^1([0,T], \bL^2(\Omega))$ such that $\norm{\ff-\ff_{h\tau}^+}_{\bL^2(\OT)}~\xrightarrow{h,\tau \to 0}~0$. Then, the sequences of discrete functions $\mm_{h\tau}$, $\mm_{h\tau}^\pm$ and $\mhatp$, admit subsequences (not relabeled) that unconditionally converge to a function $\mm$ which is a weak solution of LLG~\eqref{eq: LLG system} in the sense of Definition~\ref{def: weak solution}\eqref{def: weak solution i}--\eqref{def: weak solution iii}.
More precisely, it holds that $\mm_{h\tau}\rightharpoonup\mm$ weakly in~$\bH^1(\OT)$ (and hence $\mm_{h\tau} \to \mm$ strongly in $\bL^2(\OT)$) and $\mm_{h\tau},\mm_{h\tau}^\pm,\mhatp\stackrel{*}{\rightharpoonup}\mm$ in $L^\infty(0,T;\bH^1(\Omega))$ as $(h,\tau)\to(0,0)$,
where all convergences hold for the same subsequence.
Moreover, define 
\begin{equation}\label{eq: eta0n}
	\eta_0 \coloneqq \normO{\nabla\mm_h^1}^2-\normO{\nabla\mm_h^0}^2\qquad\text{and}\qquad\eta_n\coloneqq\normO{\nabla\mm_h^{n-1}}^2-\normO{\nabla\mm_h^{n}}^2,
\end{equation}
where $\N\ni n = \mathcal{O}(\tau^{-1})$ denotes the final time step.
Then, the limit $\mm$ satisfies also Definition~\ref{def: weak solution}\eqref{def: weak solution iv}, provided that 
\begin{equation}\label{eq: condition eta0n}
\eta_0+\eta_n\xrightarrow{(h,\tau)\to(0,0)} 0.
\end{equation}
In particular, \eqref{eq: condition eta0n} is guaranteed under the CFL-type condition $\tau = o(h^2)$, i.e., $\sqrt{\tau}h^{-1}\to0$ as $(h,\tau)\to(0,0)$, which is, however, only required for the first and last time step.
\end{theorem}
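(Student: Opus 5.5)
The plan is the standard compactness argument of \cite{Alouges_2008,abp24}, adapted to the BDF2 structure of Algorithm~\ref{alg: full discr}.

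\textbf{Step~1: discrete energy estimate.} For $j\ge1$ we test \eqref{eq: full discr alg B} with $\pphi_h=\vv_h^j\in\bTh(\mhat_h^{j+1})$. The cross-product term vanishes because $\mhat_h^{j+1}\times\vv_h^j\cdot\vv_h^j=0$.

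For the gradient term we rewrite $\tfrac23\tau\vv_h^j=\mm_h^{j+1}-\tfrac43\mm_h^j+\tfrac13\mm_h^{j-1}$. This turns
\[
\tfrac23\lamex^2\tau\|\nabla\vv_h^j\|^2+\tfrac13\lamex^2\langle\nabla[4\mm_h^j-\mm_h^{j-1}],\nabla\vv_h^j\rangle
\]
into $\tfrac32\tau^{-1}\lamex^2\langle\nabla\mm_h^{j+1},\nabla(\mm_h^{j+1}-\tfrac43\mm_h^j+\tfrac13\mm_h^{j-1})\rangle$, i.e.\ a BDF2 pairing. The BDF2 $G$-stability identity (Dahlquist / Nevanlinna--Odeh) then gives
\[
\langle 3a^{j+1}-4a^j+a^{j-1},a^{j+1}\rangle=\tfrac12\bigl(|a^{j+1}|^2+|2a^{j+1}-a^j|^2-|a^j|^2-|2a^j-a^{j-1}|^2+|a^{j+1}-2a^j+a^{j-1}|^2\bigr).
\]
Summing in $j$ and using Young's inequality on $\langle\ff_h^{j+1},\vv_h^j\rangle$, we aim for
\[
\|\nabla\mm_h^n\|^2+\alpha\tau\sum_j\|\vv_h^j\|^2+\tau^4\sum_j\|\nabla \d_t^2\mm_h^{j+1}\|^2\lesssim \|\nabla\mm_h^0\|^2+\|\nabla\mm_h^1\|^2+\|\ff\|^2_{L^2(\bL^2)}.
\]
The first step~\eqref{eq: first step full discr} is handled by the usual tangent-plane estimate and yields $\|\nabla\mm_h^1\|\lesssim1$.

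\textbf{Step~2: $L^2$ bound and near-unit length.} The scheme is projection-free, so $|\mm_h^j|$ is not $1$ at the nodes. We bound it via the nodewise orthogonality $\vv_h^j(\zz)\cdot\mhat_h^{j+1}(\zz)=0$. Expanding $|\mm_h^{j+1}(\zz)|^2$ with $\mm_h^{j+1}=\mhat_h^{j+1}+\tfrac23\tau\vv_h^j-\tfrac13\tau^2\d_t^2\mm_h^{j+1}$ expresses the increments of $|\mm_h^j(\zz)|^2-1$ in terms of $\tau^2|\vv_h^j|^2$ and $\tau^2$-scaled second differences. Summing with the mass-lumped norm (equivalent to the $L^2$ norm on $\SSS^1$) and using Step~1 gives $\|\,|\mm_h^j|^2-1\|_{L^1}\to0$ uniformly in $j$, hence $L^\infty(\bL^2)$ bounds as well. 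I expect this to be the main obstacle: the $\tau^2\d_t^2$ term is controlled only by the numerical dissipation in Step~1, possibly through an inverse estimate, and this must be done without any CFL condition. This is exactly where the BDF2 extra terms demand care, and I would try the trick from \cite{abp24} of bounding $\sum_j\tau^4\|\d_t^2\mm_h^{j+1}\|^2$ via a discrete Poincaré argument.

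\textbf{Step~3: compactness and identification of the limit.} Boundedness of $\mm_{h\tau}$ in $\bH^1(\OT)$ and of $\mm_{h\tau}^\pm,\mhatp$ in $L^\infty(\bH^1)$ follows from Steps~1--2. Differences such as $\|\mm_{h\tau}-\mm_{h\tau}^\pm\|_{\bL^2(\OT)}\lesssim\tau\|\partial_t\mm_{h\tau}\|$ yield a common limit. Aubin--Lions and Step~2 then give property~(i), including $|\mm|=1$, while (ii) follows from \eqref{eq: assumption m0}.

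For property~(iii), we test \eqref{eq: full discr alg B} with $\pphi_h=\II_h(\mhat_h^{j+1}\times\vvarphi)$ for smooth $\vvarphi$, and use standard interpolation estimates for discrete products. Passing to the limit requires weak convergence of the discrete derivative, $\vv_{h\tau}\rightharpoonup\partial_t\mm$, which follows because $\vv_h^j-\d_t\mm_h^{j+1}=O(\tau\, \d_t^2\mm_h)$. The $\tau\nabla\vv_h$ term vanishes in the limit, since it is bounded by $\sqrt\tau$ times an energy-controlled quantity.

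\textbf{Step~4: energy inequality.} Here we sum the identity of Step~1 up to $n$ and use weak lower semicontinuity. The telescoped $G$-norm $\tfrac12(\|\nabla\mm_h^n\|^2+\|\nabla(2\mm_h^n-\mm_h^{n-1})\|^2)$ differs from $\|\nabla\mm_h^n\|^2$ by terms of the form $\eta_n$ from \eqref{eq: eta0n}, and the starting terms produce $\eta_0$. Hence \eqref{eq: condition eta0n} is exactly what removes these discrepancies. Under $\tau=o(h^2)$, the inverse estimate $\|\nabla(\mm_h^n-\mm_h^{n-1})\|\lesssim h^{-1}\tau\|\vv_h^{n-1}\|$ together with $\tau\|\vv_h^{n-1}\|^2\lesssim1$ gives $\eta_n\lesssim\sqrt\tau h^{-1}\to0$, and similarly for $\eta_0$.
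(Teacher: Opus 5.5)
This theorem is only quoted here from \cite{part1}, so the paper contains no proof to compare with. Judged on its own, your compactness strategy is the right one, but Step~2 has a genuine gap. Your splitting $\mm_h^{j+1}=\mhat_h^{j+1}+\tfrac23\tau\vv_h^j-\tfrac13\tau^2\d_t^2\mm_h^{j+1}$ is incorrect; the right one is simply $\mm_h^{j+1}=\mhat_h^{j+1}+\tau^2\d_t^2\mm_h^{j+1}$. Together with $\vv_h^j(\zz)\cdot\mhat_h^{j+1}(\zz)=0$, this gives the exact nodal identity recorded in Lemma~\ref{lem: well-posedness discrete projection}(a), and hence $0\le|\mm_h^n(\zz)|^2-1\le\tfrac32\big(\tau^2|\vv_h^0(\zz)|^2+\sum_j\tau^4|\d_t^2\mm_h^{j+1}(\zz)|^2\big)$. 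More seriously, the mechanism you propose for $\sum_j\|\tau^2\d_t^2\mm_h^{j+1}\|^2\to0$ does not work. The numerical dissipation only yields $\sum_j\|\nabla(\tau^2\d_t^2\mm_h^{j+1})\|^2\lesssim1$; a Poincar\'e inequality turns this into an $\mathcal{O}(1)$ bound rather than a vanishing one, and inverse estimates point the wrong way. The missing observation is that the $\bL^2$-dissipation already suffices. The update $\mm_h^{j+1}=\tfrac43\mm_h^j-\tfrac13\mm_h^{j-1}+\tfrac23\tau\vv_h^j$ is equivalent to $\d_t\mm_h^{j+1}=\tfrac23\vv_h^j+\tfrac13\d_t\mm_h^j$ with $\d_t\mm_h^1=\vv_h^0$. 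A discrete Young inequality then gives $\tau\sum_j\|\d_t\mm_h^{j+1}\|^2\le\tfrac94\,\tau\sum_j\|\vv_h^j\|^2\lesssim1$, whence $\sum_j\|\tau^2\d_t^2\mm_h^{j+1}\|^2\le2\tau^2\sum_j\big(\|\d_t\mm_h^{j+1}\|^2+\|\d_t\mm_h^j\|^2\big)\lesssim\tau$, with no CFL condition. Step~1 controls $\vv_h^j$ and not $\d_t\mm_h^{j+1}$, so you need the same recursion for the $\bH^1(\OT)$-bound on $\mm_{h\tau}$ in Step~3. You also need it for $\tau\|\d_t\mm_h^n\|^2\lesssim1$ in Step~4, where $\mm_h^n-\mm_h^{n-1}=\tau\d_t\mm_h^n$, not $\tau\vv_h^{n-1}$.

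In Step~3 the exchange term is not handled correctly either. Bounding $\tfrac23\lamex^2\tau\langle\nabla\vv_h^j,\nabla\pphi_h\rangle$ by $\sqrt\tau$ times an energy quantity would need $\tau^2\sum_j\|\nabla\vv_h^j\|^2\lesssim1$. The BDF2 energy does not give this (unlike the $\theta>1/2$ tangent-plane scheme); it only controls the second differences $\sum_j\|\nabla(\mm_h^{j+1}-2\mm_h^j+\mm_h^{j-1})\|^2$. Moreover, the remaining term carries $\tfrac13(4\mm_h^j-\mm_h^{j-1})=\mhat_h^{j+1}-\tfrac23(\mm_h^j-\mm_h^{j-1})$ rather than $\mhat_h^{j+1}$. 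So the pointwise cancellation $\nabla\mhat_h^{j+1}\cdot(\nabla\mhat_h^{j+1}\times\vvarphi)=0$ underlying the standard argument is not directly available. The clean repair is not to split. Both gradient terms add up to $\lamex^2\langle\nabla\mm_h^{j+1},\nabla\pphi_h\rangle=\lamex^2\langle\nabla\mhat_h^{j+1},\nabla\pphi_h\rangle+\lamex^2\langle\nabla(\tau^2\d_t^2\mm_h^{j+1}),\nabla\pphi_h\rangle$. The gradient dissipation from Step~1 gives precisely $\|\nabla(\mm_{h\tau}^+-\mhatp)\|_{\bL^2(\OT)}\lesssim\sqrt\tau$, so the second term vanishes. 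The first is treated by the usual cancellation plus interpolation estimates for $\III_h(\mhatp\times\vvarphi)$. Splitting can be rescued only through an extra summation by parts showing $\tau\sum_j\|\nabla(\mm_h^j-\mm_h^{j-1})\|^2\lesssim\sqrt\tau$. Similarly, $\vv_h^j-\d_t\mm_h^{j+1}=\tfrac{\tau}{2}\d_t^2\mm_h^{j+1}$ is only bounded in $\bL^2(\OT)$ by the available estimates. It vanishes weakly, being half the difference of $\partial_t\mm_{h\tau}$ and its $\tau$-shift. In short, your outline has the two dissipation terms of Step~1 in each other's places: the $\bL^2$-dissipation drives the constraint estimate, and the gradient dissipation is what makes the exchange term consistent.
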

}

\subsection{Optimal \emph{a-priori} error estimate in the presence of a strong solution}\label{section:strong solutions}

Throughout this work, we suppose that the applied external field satisfies
\begin{subequations}\label{eq: regularity assumptions}
\begin{align}\label{eq: regularity f}
	\ff\in C^1([0,T],\bL^\infty(\Omega))
\end{align}
\text{
and that LLG \eqref{eq: LLG system} admits a strong solution satisfying} 
\begin{align}\label{eq: regularity}
	\mm\in C^3\big([0,T\,],\bH^1(\Omega)\big)\,\cap\, C^1\big([0,T\,],\bH^2(\Omega)\cap\bW^{1,\infty}(\Omega)\big)\,\cap \,C\big([0,T\,],\bW^{2,\infty}(\Omega)\big)
\end{align}
\end{subequations}
Note that LLG \eqref{eq: LLG system} satisfies a weak-strong uniqueness principle, i.e., any weak solution to LLG coincides with $\mm$ until time $T$; see \cite{ds2014, dip2020}. Moreover, the chain rule and integration in time prove that $\mm$ satisfies the energy equality, for all times $t'\in[0,T]$,
	\begin{equation*}\label{eq: energy equality}
		\begin{aligned}
			\mathcal{E}(\mm(0), \ff(0)) = \mathcal{E}(\mm(t'), \ff(t')) + \alpha \int_0^{t'}\normO[\big]{\partial_t \mm(t)}^2 \mathrm{~d} t + \int_0^{t'}\langle\partial_t \ff(t), \mm(t)\rangle _\Omega \d t. 
		\end{aligned}
	\end{equation*}

The following theorem is the main result of this work. It states optimal-order error bounds for the approximations $\mm_h^n$ derived by Algorithm \ref{alg: full discr}.
\begin{theorem}\label{thm: main result}
	Consider Algorithm \ref{alg: full discr} for a $\gamma$-quasi-uniform family of conforming simplicial meshes $\mathcal{T}_h$ of $\Omega$. Suppose the regularity assumptions \eqref{eq: regularity assumptions}. Suppose that the initial value $\mm_h^0$ satisfies the assumption \eqref{eq: assumption m0}
	and that $\ff$ satisfies $\max\limits_{n=0,\dots,N}\norm{\ff(t_n)-\ff_h^n}_{\bL^{\infty}(\Omega)}\le C_{\ff}h$ for some constant $C_{\ff}>0$. Let $C_{\rm CFL}>0$ and $0<\varepsilon<1$.
	Then, there exist constants $C_{\rm conv}$, $\bar{h}$, $\bar{\tau}>0$ such that, if $h \le \bar{h}$ and $\tau \le \bar{\tau}$ satisfy the (very mild) CFL-condition 
	$\tau\le C_{\rm CFL} h^{(1+\varepsilon)/4}$, then the error is bounded by
	\begin{equation}\label{eq: optimal order error estimate}
	\norm{\mm(t_n)-\mm_h^n}_{\bH^1(\Omega)}\le C_{\rm conv}(h+\tau^2)\quad \text{ for all }\quad t_n = n\tau \le T \quad \text{with} \quad n\ge 1.
	\end{equation}
	The constant $C_{\rm conv}$ depends only on $\gamma$, $\alpha$, $\lamex$, $|\Omega|$, $T$, $C_0$, $C_{\ff}$, $C_{\rm CFL}$, $\varepsilon$, as well as $\bar{h}$ and $\bar{\tau}$ and on the regularity assumptions \eqref{eq: regularity assumptions} on $\ff$and $\mm$, but is independent of $h, \tau$, and $n$.
\end{theorem}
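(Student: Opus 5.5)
We follow the classical consistency-plus-stability strategy of \cite{feischl}, adapted to the node-wise tangent space as in \cite{bkw2024}.

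\textbf{Reference sequence and defects.} Let $\mm_\star(t)\coloneqq R_h\mm(t)$ be a suitable projection of the exact solution; first-order nodal interpolation or a Ritz projection with $\bH^1$-error $\mathcal O(h)$ would do. Set $\vv_\star^j\coloneqq$ the $\bL^2$-orthogonal (node-wise) projection of $I_h\partial_t\mm(t_{j+1})$ onto $\bT_h(\mhat_\star^{j+1})$, where $\mhat_\star^{j+1}=2\mm_\star^j-\mm_\star^{j-1}$.

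Next, insert $(\mm_\star^j,\vv_\star^j)$ into the scheme \eqref{eq: full discr alg B} and into the update step~\eqref{item: def mmhj} of Algorithm~\ref{alg: full discr}. This defines defects $\dd^j$ (in the variational equation, measured in the dual norm of $\bT_h(\mhat_\star^{j+1})$ w.r.t.\ $\bH^1$) and $\rr^j$ (in the BDF2 relation). Using Taylor expansion with the $C^3(\bH^1)$ regularity from \eqref{eq: regularity}, the weak form \eqref{eq: weak formulation}, and the $\bW^{2,\infty}$-regularity to control $|\mhat_\star^{j+1}|-1=\mathcal O(\tau^2+h^2)$ nodewise, we aim to show
\[
\|\dd^j\|_{\star}+\|\rr^j\|_{\bH^1}\lesssim h+\tau^2 .
\]
The delicate part of this consistency step is to compare the test spaces $\bT(\mm(t_{j+1}))$ and $\bT_h(\mhat_\star^{j+1})$. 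Here we rely on the stability and approximation properties of the node-wise orthogonal projection announced in Section~\ref{sec: preliminary results}, namely that it is $\bH^1$-stable and $\mathcal O(h)$-accurate for smooth arguments. These allow us to project the continuous test function and to absorb the normal component of the right-hand side, since $\heff-(\mm\cdot\heff)\mm$ is tangent.

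\textbf{Stability via energy technique.} We write the errors $\ee^j=\mm_h^j-\mm_\star^j$ and $\ee_v^j=\vv_h^j-\vv_\star^j$. The difficulty is that $\vv_h^j$ and $\vv_\star^j$ live in different tangent spaces. We therefore test the error equation with $\bPh(\mhat_h^{j+1})$ applied to $\ee_v^j$, and bound the projection mismatch by $\|\mhat_h^{j+1}-\mhat_\star^{j+1}\|_{\bL^\infty}$ times data.

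Combining $\ee^{j+1}=\frac43\ee^j-\frac13\ee^{j-1}+\frac23\tau\ee_v^j+\tau\rr^j$ with the BDF2 multiplier/G-stability identity of Dahlquist type (BDF2 is A-stable, so the Nevanlinna--Odeh-type multiplier is zero), the gradient term $\langle\nabla(\text{BDF2 combination}),\nabla\ee_v\rangle$ telescopes into a G-norm of $(\nabla\ee^{j+1},\nabla\ee^j)$. The damping term then gives $\alpha\|\ee_v^j\|^2$. The skew term $\langle\mhat\times\ee_v,\ee_v\rangle$ vanishes, and its perturbation $\langle(\mhat_h-\mhat_\star)\times\vv_\star,\ee_v\rangle$ is bounded using $\vv_\star\in\bL^\infty$.

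Nonlinear perturbations require $\|\ee^j\|_{\bL^\infty}$, and also $\|\nabla\ee^j\|_{\bL^\infty}$-type bounds for the projection terms. These are handled by a bootstrap argument. Assume $\|\ee^k\|_{\bH^1}\le h^{(1+\varepsilon)/2}$-ish for $k\le j$. Inverse estimates, $\|\cdot\|_{\bL^\infty}\lesssim h^{-d/2}\|\cdot\|_{\bL^2}$ and its $W^{1,\infty}$ analogue, then yield $\mathcal O(1)$ control of $\mhat_h$ in $\bW^{1,\infty}$ as long as $h+\tau^2$ times the inverse factors is small. This is exactly where the CFL-type condition $\tau\le C_{\rm CFL}h^{(1+\varepsilon)/4}$ enters: it gives $\tau^2\le C h^{(1+\varepsilon)/2}$, and hence $(h+\tau^2)h^{-1/2}\to0$. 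A discrete Gronwall inequality then gives $\|\ee^{j+1}\|_{\bH^1}\lesssim h+\tau^2$ with a constant independent of the bootstrap, which closes the induction for $h\le\bar h$, $\tau\le\bar\tau$.

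\textbf{Conclusion.} Combining this with $\|\mm(t_n)-\mm_\star^n\|_{\bH^1}\lesssim h$ and the starting errors from \eqref{eq: assumption m0} and \eqref{eq: assumption m1} gives \eqref{eq: optimal order error estimate}. The main obstacle I expect is the stability step. One difficulty is handling the changing, solution-dependent test spaces $\bT_h(\mhat_h^{j+1})$ within the BDF2 energy identity. The other is making the G-stability telescoping compatible with the projection commutator terms without losing a power of $h$. I would first try bounding the commutator $(\bPh(\mhat_h)-\bPh(\mhat_\star))\vv_\star$ in $\bH^1$ by $\|\mhat_h-\mhat_\star\|_{\bH^1}$, using the nodal structure and $\vv_\star\in\bW^{1,\infty}$.
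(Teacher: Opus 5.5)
Your architecture matches the paper's: Ritz reference $\bR_h\mm(t_n)$, testing with $\bPh(\mhat_h^{j+1})\ee_v^j$, BDF2 $G$-stability, an $\bL^\infty$-bootstrap closed by the CFL condition, and discrete Gronwall. Building $\vv_\star^j$ from $\III_h\partial_t\mm(t_{j+1})$ instead of the BDF2 quotient of $\bR_h\mm$ is harmless, since your $\rr^j$ then plays the role of the paper's $\sss_h^n$ and is still $\mathcal O(h+\tau^2)$ in $\bH^1(\Omega)$.

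The first genuine gap is that your consistency and stability norms do not match. The energy argument controls $\ee_v^j$ only through $\alpha\|\ee_v^j\|_{\bL^2(\Omega)}^2$. The $G$-stability identity yields the dissipation $\frac{1}{4\tau}\|\nabla(\ee^{j+1}-2\ee^j+\ee^{j-1})\|_{\bL^2(\Omega)}^2$, not $\tau\|\nabla\ee_v^j\|_{\bL^2(\Omega)}^2$. Hence the defect must be bounded in $\bL^2(\Omega)$; a bound in the $\bH^1$-dual norm on $\bT_h(\mhat_\star^{j+1})$ cannot be paired with your test function. This is why the paper needs the Ritz projection rather than the nodal interpolant. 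By \eqref{eq: Ritz projection 3}, $\langle\nabla\bR_h\mm(t_n),\nabla\pphi_h\rangle_\Omega=-\langle\Delta\mm(t_n),\pphi_h\rangle_\Omega$, so the whole defect is represented by the explicit function $\bD_h^n$, and Proposition~\ref{prop: consistency error} gives $\|\dd_h^n\|_{\bL^2(\Omega)}\lesssim h+\tau^2$. With $\III_h\mm$, the term $\langle\nabla(\III_h\mm-\mm),\nabla\pphi_h\rangle_\Omega$ is in general only $\mathcal O(h)\,\|\nabla\pphi_h\|_{\bL^2(\Omega)}$, i.e. $\mathcal O(1)$ in the $\bL^2$-dual sense. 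So the nodal interpolant would not do here.

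Moreover, the defect lives on $\bT_h(\mhat_\star^{j+1})$, while your test function lies in $\bT_h(\mhat_h^{j+1})$. The transfer (Lemma~\ref{lem: bounds r}) must put $\bL^\infty$ on the data and $\bL^2$ on the error:
\[
\begin{aligned}
&\langle\bD_h^n,(\bPh(\mhat_h^n)-\bPh(\mstarhhat^n))\pphi_h\rangle_\Omega\\
&\quad\le\|\bD_h^n\|_{\bL^\infty(\Omega)}\,\|(\bPh(\mhat_h^n)-\bPh(\mstarhhat^n))\pphi_h\|_{\bL^1(\Omega)}
\lesssim\|\widehat{\ee}_h^n\|_{\bL^2(\Omega)}\|\pphi_h\|_{\bL^2(\Omega)},
\end{aligned}
\]
using \eqref{eq: discrete projection lipschitz estimate L1} and $\lamex^2\Delta\mm+\ff\in\bL^\infty$. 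A mismatch factor $\|\mhat_h^{j+1}-\mhat_\star^{j+1}\|_{\bL^\infty(\Omega)}$, as you propose, is not controlled by the Gronwall quantity $\|\widehat{\ee}\|_{\bH^1(\Omega)}$. It leaves either a non-vanishing $\mathcal O(\rho)$ contribution or a factor $h^{-1}$ in the Gronwall constant.

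The second gap is the $\bW^{1,\infty}$-bootstrap, which cannot close. From $\|\ee^j\|_{\bH^1(\Omega)}\lesssim h+\tau^2$, inverse estimates give at best $\|\nabla\ee^j\|_{\bL^\infty(\Omega)}\lesssim h^{-d/2}(h+\tau^2)$. This is unbounded for $d=3$, and also for $d=2$ under $\tau\le C_{\rm CFL}h^{(1+\varepsilon)/4}$, since $\tau^2/h$ may then be of size $h^{(\varepsilon-1)/2}$. The factor $h^{-1/2}$ you quote is the $\bH^1\to\bL^\infty$ factor of \eqref{eq: sobolev}, and it only gives $\|\ee^j\|_{\bL^\infty(\Omega)}\to0$.

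Gradient control of $\mhat_h$ is not needed, however. The paper's induction hypothesis is only $\|\ee_h^j\|_{\bL^\infty(\Omega)}\le\rho=1/12$, which yields $1/2\le|\mhat_h^j|\le 3/2$ a.e.\ in $\Omega$. The nodal bound of Lemma~\ref{lem: well-posedness discrete projection} alone is not enough for the normalization and interpolation estimates. For the $\bH^1$-bound of $\qq_h^n=(\bPh(\mstarhhat^n)-\bPh(\mhat_h^n))\vstarh^{n-1}$ (Lemma~\ref{lem: q_h^n}), $\bW^{1,\infty}$-bounds on $\mstarhhat^n$ and $\vstarh^{n-1}$ suffice. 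Wherever $\nabla\bN(\mhat_h^n)$ multiplies $\bN(\mhat_h^n)-\bN(\mstarhhat^n)$, split $\nabla\mhat_h^n=\nabla\mstarhhat^n+\nabla\widehat{\ee}_h^n$ and use $|\nabla\widehat{\ee}_h^n|\,|\widehat{\ee}_h^n|\le\|\widehat{\ee}_h^n\|_{\bL^\infty(\Omega)}|\nabla\widehat{\ee}_h^n|$.

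Finally, the $G$-norm only controls $\|\nabla\ee^n\|_{\bL^2(\Omega)}$. Before applying Gronwall you still have to recover the $\bL^2$-part of $\|\ee^n\|_{\bH^1(\Omega)}$ from $\tau\sum_j\|\ee_v^j\|_{\bL^2(\Omega)}^2$ by inverting the BDF2 difference operator, as in Step~4 of the proof of Proposition~\ref{prop: stability full discretization}.
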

\begin{corollary}\label{cor: optimal order error estimate interpoland}
	Under the assumptions of Theorem~\ref{thm: main result}, the space-time interpoland $\mm_{h\tau}\in C([0,T];\\\bH^1(\Omega))$ satisfies that, for all $h\le \bar{h}$ and $\tau \le \bar{\tau}$ with $\tau\le C_{\rm CFL} h^{(1+\varepsilon)/4}$,
	\begin{equation}\label{eq: optimal order error interpoland}
		\norm{\mm-\mm_{h\tau}}_{L^\infty([0,T];\bH^1(\Omega))}\le C_{\rm conv}'(h+\tau^2),
	\end{equation}
	where the constant $C_{\rm conv}'>0$ depends only on $C_{\rm conv}$ from Theorem~\ref{thm: main result} and on the regularity assumptions \eqref{eq: regularity assumptions} on $\ff$ and $\mm$.
\end{corollary}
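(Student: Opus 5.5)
The corollary follows from Theorem~\ref{thm: main result} by a triangle inequality on each subinterval. It also needs a Taylor-type estimate for the piecewise linear interpolation of the exact solution.

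Fix $t\in[t_j,t_{j+1})$, with $0\le j\le N-1$. Let $I_\tau\mm(t)\coloneqq \frac{t-t_j}{\tau}\mm(t_{j+1})+\frac{t_{j+1}-t}{\tau}\mm(t_j)$ denote the piecewise affine-in-time interpolant of the exact solution. We split
\begin{equation*}
\norm{\mm(t)-\mm_{h\tau}(t)}_{\bH^1(\Omega)}\le \norm{\mm(t)-I_\tau\mm(t)}_{\bH^1(\Omega)} + \frac{t-t_j}{\tau}\norm{\mm(t_{j+1})-\mm_h^{j+1}}_{\bH^1(\Omega)}+\frac{t_{j+1}-t}{\tau}\norm{\mm(t_{j})-\mm_h^{j}}_{\bH^1(\Omega)}.
\end{equation*}

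For the two discrete terms, Theorem~\ref{thm: main result} bounds the nodal errors at $t_{j+1}$ and, for $j\ge1$, also at $t_j$ by $C_{\rm conv}(h+\tau^2)$. In the case $j=0$ the theorem excludes $n=0$, so we use assumption~\eqref{eq: assumption m0} instead, which gives $\norm{\mm^0-\mm_h^0}_{\bH^1(\Omega)}\le C_0h$. Since the weights are convex, these contributions are bounded by $\max\{C_{\rm conv},C_0\}(h+\tau^2)$.

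The interpolation term is the one point needing care, because linear interpolation in time is only second-order accurate, with error $\mathcal{O}(\tau^2)$. That is exactly the required rate. By the regularity~\eqref{eq: regularity}, $\mm\in C^2([0,T];\bH^1(\Omega))$. The standard Peano-kernel (Taylor with integral remainder) representation of the linear interpolation error in Bochner spaces then gives
\begin{equation*}
\norm{\mm(t)-I_\tau\mm(t)}_{\bH^1(\Omega)}\le \frac{(t-t_j)(t_{j+1}-t)}{2}\max_{s\in[t_j,t_{j+1}]}\norm{\partial_t^2\mm(s)}_{\bH^1(\Omega)}\le \frac{\tau^2}{8}\norm{\mm}_{C^2([0,T];\bH^1(\Omega))}.
\end{equation*}
To justify this bound, I would apply the scalar estimate to $t\mapsto\langle \mm(t),\bphi\rangle_{\bH^1}$ for arbitrary $\bphi$ and take the supremum.

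At $t=T$ the bound holds directly by Theorem~\ref{thm: main result} with $n=N$. Taking the supremum over $t\in[0,T]$ yields \eqref{eq: optimal order error interpoland} with $C_{\rm conv}'\coloneqq\max\{C_{\rm conv},C_0\}+\frac18\norm{\mm}_{C^2([0,T];\bH^1(\Omega))}$. The constant $C_0$ is among the dependencies of $C_{\rm conv}$ anyway.
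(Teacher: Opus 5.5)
Your proof is correct and is essentially the paper's argument. You use the same splitting via the piecewise affine time interpolant of $\mm$, bound the interpolation error by $\frac{\tau^2}{8}\|\partial_t^2\mm\|_{L^\infty(0,T;\bH^1(\Omega))}$ (the paper reaches the same $\frac{(t-t_n)(t_{n+1}-t)}{2}$ weight by Taylor expansion with integral remainder about $t$), and bound the discrete part as a convex combination of nodal errors from Theorem~\ref{thm: main result}; your separate handling of the node $t_0$ via \eqref{eq: assumption m0} is a small point the paper passes over silently and only changes the constant harmlessly.
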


	\begin{remark}
		According to the optimal-order error estimate \eqref{eq: optimal order error estimate}, the natural choice is $\tau = \OO(h^{1/2})$, which also satisfies the CFL-condition $\tau = \OO(h^{(1+\varepsilon)/4})$ for any $0<\varepsilon\leq1$.
	\end{remark}

\section{Preliminaries}\label{sec: preliminary results}
\subsection{Tangential projection}\label{section:projection}
Recall the tangent space $\bT(\uu)$ from~\eqref{eq: tangent space}. We define
$$
\mathcal{M}^+\coloneqq \set{\uu\in \bH^1(\Omega)}{|\uu| > 0 \,\text{ a.e. in } \,\Omega}.
$$
For all $\uu\in \mathcal{M}^+$, we define the pointwise orthogonal projection onto $\bT(\uu)$ as
$$
\bP(\uu)\colon \mathcal{M}^+\to \bT(\uu), \qquad \bP(\uu)\coloneqq \mathbf{I}-\frac{\uu\otimes\uu}{|\uu|^2}.
$$
\begin{remark}
Thanks to the definition of the tangential projection and the inherent constraint $|\mm|=1$ for the solution of LLG \eqref{eq: LLG system}, it holds that
$
\bP(\mm)= \mathbf{I}-\mm\mm^T.
$
Therefore, equation \eqref{eq: LLG} can be equivalently rewritten as 
\begin{equation}\label{eq: alternative LLG with P}
	\alpha\partial_t\mm + \mm\times\partial_t \mm = \heff-(\mm\cdot \heff)\mm =  \bP(\mm)\heff= \bP(\mm)(\lamex^2\Delta\mm+\ff).
\end{equation}
\end{remark}

Firstly, we prove stability estimates for the tangential projection $\bP(\cdot)$. To this end, we define the normalization operator
$$
	\bN\colon \mathcal{M}^+\to \bH^1(\Omega), \qquad \bN(\uu) \coloneqq \frac{\uu}{|\uu|}.
$$
The following lemma from \cite{bkw2024} shows boundedness and Lipschitz continuity of $\bN(\cdot)$. To make this manuscript self-contained, the proofs are given in Appendix~\ref{section: appendix}.
\begin{lemma}[Normalization bounds \texorpdfstring{\cite[Lemma~2.1, Lemma~2.2]{bkw2024}}{[Lemma~2.1, Lemma~2.2, bkw2024]}]\label{lem: normalization bounds}
	Let $\uu\in \bW^{1,\infty}(\Omega)$ and $\uu_h\in \boldsymbol{\mathcal{S}}^1(\mathcal{T}_h)$. Let $c>0$ such that $0<c\leq|\uu|,|\uu_h|\leq c^{-1}$ a.e. in $\Omega$. Then, it holds that
	\begin{subequations}\label{eq: normalization bounds}
		\begin{align}
		\norm{\bN(\uu)}_{\bL^{\infty}(\Omega)}&\le C_\bN\norm{\uu}_{\bL^{\infty}(\Omega)},\label{eq: normalization bounds 0}\\
		\norm[\big]{\nabla \bN(\uu)}_{\bL^\infty(\Omega)} &\le C_\bN \norm{\nabla \uu}_{\bL^\infty(\Omega)},\label{eq: normalization bounds i}\\
		\norm[\big]{\mathrm{D}_h^2 \bN(\uu)}_{\bL^\infty(\Omega)} &\le C_\bN \big(\norm{\nabla \uu}^2_{\bL^\infty(\Omega)} + \norm{\mathrm{D}_h^2 \uu}_{\bL^\infty(\Omega)}\big),\label{eq: normalization bounds ii}\\
		\norm[\big]{\mathrm{D}_h^2 \bN(\uu_h)}_{\bL^\infty(\Omega)} &\le C_\bN \norm{\nabla \uu_h}^2_{\bL^\infty(\Omega)}.\label{eq: normalization bounds iii}
		\end{align}
	\end{subequations}
	Moreover, let $k\in\{0,1\}$, $1\le p \le \infty$, $c>0$, and $\uu,\widetilde{\uu}\in \bW^{k,p}(\Omega)$ such that $0 < c \le |\uu|,|\widetilde{\uu}|\le c^{-1}$ a.e. in $\Omega$. Then, it holds that
	\begin{equation}\label{eq: normalization bounds 2}
		\norm[\big]{\bN(\uu)-\bN(\widetilde{\uu})}_{\bW^{k,p}(\Omega)} \le C_\bN \norm{\uu-\widetilde{\uu}}_{\bW^{k,p}(\Omega)}.
	\end{equation}
	In either estimate \eqref{eq: normalization bounds}--\eqref{eq: normalization bounds 2}, the constant $C_\bN> 0$ depends only on $c$.
\end{lemma}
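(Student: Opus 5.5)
The plan is to prove each estimate by writing out $\bN(\uu)=\uu/|\uu|$ and its derivatives explicitly, and to control every factor $|\uu|^{-k}$ by the pointwise bounds $c\le|\uu|,|\uu_h|\le c^{-1}$. For \eqref{eq: normalization bounds 0}, note that $|\bN(\uu)|=1\le c^{-1}|\uu|$ pointwise, so $C_\bN=c^{-1}$ suffices. For \eqref{eq: normalization bounds i}, differentiate $\bN(\uu)$ using the chain rule for the smooth map $\boldsymbol{y}\mapsto \boldsymbol{y}/|\boldsymbol{y}|$ on $\{c\le|\boldsymbol{y}|\le c^{-1}\}$; this map is valid because $\uu\in\bW^{1,\infty}$ is Lipschitz. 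This gives
\[
\partial_i\bN(\uu)=\frac{1}{|\uu|}\bP(\uu)\,\partial_i\uu ,
\]
so $|\nabla\bN(\uu)|\le c^{-1}|\nabla\uu|$ pointwise.

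For the second-order bounds, I interpret $\mathrm{D}_h^2$ as the elementwise (broken) Hessian on each $K\in\TT_h$. On each element $\uu$ is smooth enough to differentiate once more. The product rule applied to $|\uu|^{-1}\bP(\uu)\partial_i\uu$ yields terms of two types. Terms like $\partial_j(|\uu|^{-1}\bP(\uu))\,\partial_i\uu$ are bounded by $C(c)|\nabla\uu|^2$, since all derivatives of the smooth map $\boldsymbol{y}\mapsto|\boldsymbol{y}|^{-1}\bP(\boldsymbol{y})$ are bounded on the annulus. The remaining term $|\uu|^{-1}\bP(\uu)\partial_{ij}\uu$ is bounded by $c^{-1}|\mathrm{D}_h^2\uu|$. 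Together these give \eqref{eq: normalization bounds ii}. For $\uu_h\in\SSS^1(\TT_h)$, the function $\uu_h$ is affine on each $K$, so $\mathrm{D}_h^2\uu_h=0$ and \eqref{eq: normalization bounds iii} follows from the same computation.

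For the Lipschitz estimate \eqref{eq: normalization bounds 2}, I use that $G(\boldsymbol{y})=\boldsymbol{y}/|\boldsymbol{y}|$ is smooth with bounded first and second derivatives on $A=\{c\le|\boldsymbol{y}|\le c^{-1}\}$.
\begin{itemize}
\item For $k=0$, the mean value theorem only applies along segments that stay in $A$, which is not convex. I would instead use the elementary identity
\[
\frac{\uu}{|\uu|}-\frac{\widetilde\uu}{|\widetilde\uu|}=\frac{\uu-\widetilde\uu}{|\uu|}+\widetilde\uu\,\frac{|\widetilde\uu|-|\uu|}{|\uu||\widetilde\uu|}.
\]
This gives $|\bN(\uu)-\bN(\widetilde\uu)|\le 2c^{-1}|\uu-\widetilde\uu|$ pointwise, and taking $L^p$ norms finishes this case.
\item For $k=1$, write
\[
\nabla\bN(\uu)-\nabla\bN(\widetilde\uu)=\mathrm{D}G(\uu)\nabla(\uu-\widetilde\uu)+\big(\mathrm{D}G(\uu)-\mathrm{D}G(\widetilde\uu)\big)\nabla\widetilde\uu.
\]
The first term is bounded by $C(c)|\nabla(\uu-\widetilde\uu)|$. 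The second term is the main obstacle. Pointwise it is bounded by $C(c)|\uu-\widetilde\uu|\,|\nabla\widetilde\uu|$, again via a difference-quotient identity for $\mathrm{D}G(\boldsymbol{y})=|\boldsymbol{y}|^{-1}(\mathbf{I}-\boldsymbol{y}\otimes\boldsymbol{y}/|\boldsymbol{y}|^2)$, but this bound requires control of $\nabla\widetilde\uu$ in $L^\infty$.
\end{itemize}

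The plan for the obstacle is therefore to keep the $k=1$ case in the form actually used in the paper. Either one function is the smooth exact solution, so that $\norm{\nabla\widetilde\uu}_{\bL^\infty}$ is bounded by the regularity assumptions, or one passes to discrete functions and uses an inverse estimate on quasi-uniform meshes. The constant $C_\bN$ then absorbs $\norm{\nabla\widetilde\uu}_{\bL^\infty}$; as stated, with dependence on $c$ only, I would check whether the bound is intended to hold under this implicit $W^{1,\infty}$ assumption, following the cited \cite{bkw2024}. With that granted, Hölder's inequality gives the $\bW^{1,p}$ bound.
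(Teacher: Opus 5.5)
Your argument follows the paper's proof essentially line by line. Both use explicit formulas for the first and second derivatives of $\uu/|\uu|$, bounded pointwise by powers of $c^{-1}$. Your form $\partial_i\bN(\uu)=|\uu|^{-1}\bP(\uu)\partial_i\uu$ is a tidier packaging and even gives $c^{-1}$ instead of the paper's $2/c$. Both use $\mathrm{D}_h^2\uu_h=0$ elementwise for \eqref{eq: normalization bounds iii} and the same elementary splitting for the $k=0$ Lipschitz bound. For $k=1$, both use a pointwise estimate of the same type as the paper's \eqref{eq: in appendix 2}.

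The obstacle you flag for $k=1$ is genuine, and it is a defect of the statement rather than of your approach. The paper arrives at
\[
|\partial_j\bN(\uu)-\partial_j\bN(\widetilde\uu)|\lesssim|\partial_j(\uu-\widetilde\uu)|+(|\partial_j\uu|+|\partial_j\widetilde\uu|)\,|\uu-\widetilde\uu|
\]
and then simply ``integrates over $\Omega$''. That step silently uses $\nabla\uu,\nabla\widetilde\uu\in\bL^\infty(\Omega)$ and makes the constant depend on their size.

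In fact \eqref{eq: normalization bounds 2} with $k=1$ and $C_\bN$ depending only on $c$ is false. Take $\widetilde\uu(\xx)=(\cos(\lambda x_1),\sin(\lambda x_1),0)$ and $\uu=\widetilde\uu+\varepsilon(0,0,1)$ with $0<\varepsilon\le 1$. Then $|\widetilde\uu|=1$ and $|\uu|=(1+\varepsilon^2)^{1/2}$, so $c=1/2$ works for every $\lambda$, and
\[
\nabla\big(\bN(\uu)-\bN(\widetilde\uu)\big)=\big((1+\varepsilon^2)^{-1/2}-1\big)\nabla\widetilde\uu .
\]
Its $\bL^2$-norm is of order $\varepsilon^2\lambda|\Omega|^{1/2}$, while $\norm{\uu-\widetilde\uu}_{\bH^1(\Omega)}=\varepsilon|\Omega|^{1/2}$. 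Letting $\lambda\to\infty$, the ratio is unbounded.

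What can be proved, and what your decomposition proves, is
\[
\norm{\bN(\uu)-\bN(\widetilde\uu)}_{\bW^{1,p}(\Omega)}\le C(c)\big(1+\norm{\nabla\widetilde\uu}_{\bL^\infty(\Omega)}\big)\norm{\uu-\widetilde\uu}_{\bW^{1,p}(\Omega)}.
\]
This is all the paper needs downstream, since Lemma~\ref{lem: projection} and Lemma~\ref{lem: stability discrete projection} assume $\bW^{k,\infty}$-bounds $M$ and let the constants depend on $M$. You should state the $k=1$ case in this form rather than leave it as something to check. Your asymmetric version, with a gradient bound on only one function, is slightly sharper than the symmetric estimate the paper's argument yields. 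Drop the inverse-estimate alternative, because it would only give $h$-dependent constants of size $h^{-1}$.
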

The following result provides stability results for the tangential projection $\bP(\cdot)$, generalizing \cite[Lemma~4.1]{feischl}. The proof is given in Appendix~\ref{section: appendix}.
\begin{lemma}\label{lem: projection}
Let $k\in\{0,1\}$, $p\in\{2,\infty\}$. Let $M,c>0$ and $\uu, \widetilde{\uu}\in \bW^{k,\infty}(\Omega)$ such that  
$$
0<c\le|\uu|,|\widetilde{\uu}|\le c^{-1} \,\text{ a.e. in }\Omega \quad \text{ and } \quad\norm{\uu}_{\bW^{k,\infty}(\Omega)},\norm{\widetilde{\uu}}_{\bW^{k,\infty}(\Omega)}\le M.
$$ 
Then, the tangential projection $\bP(\cdot)$ satisfies
	\begin{equation}\label{eq: stability projection}
		\norm{\bP(\uu)\vv}_{\bW^{k,p}(\Omega)}\le \newconst{stab_P} \norm{\vv}_{\bW^{k,p}(\Omega)}\norm{\uu}_{\bW^{k,\infty}(\Omega)}^2 \quad\text{for all} \quad \vv \in \bW^{k,\infty}(\Omega),
	\end{equation}
	where the constant $\const{stab_P}>0$ depends only on $c$.
Moreover, it holds that
	\begin{align}\label{eq: projection}
		\norm{(\bP(\uu)-\bP(\widetilde{\uu}))\vv}_{\bH^k(\Omega)} &\le \newconst{lipsch_P} \norm{\vv}_{\bW^{k,\infty}(\Omega)}\norm{\uu-\widetilde{\uu}}_{\bH^k(\Omega)},
	\end{align}
	and 
	\begin{align}\label{eq: projection L1}
		\norm{(\bP(\uu)-\bP(\widetilde{\uu}))\vv}_{\bL^1(\Omega)} &\le \newconst{lipsch_P} \normO{\vv}\normO{\uu-\widetilde{\uu}},
	\end{align}
where the constant $\const{lipsch_P}>0$ depends only on $M$ and $c$.
\end{lemma}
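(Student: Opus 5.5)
The proof works directly with the explicit formula $\bP(\uu)\vv=\vv-\frac{(\uu\cdot\vv)\,\uu}{|\uu|^2}=\vv-(\bN(\uu)\cdot\vv)\,\bN(\uu)$, together with the normalization bounds of Lemma~\ref{lem: normalization bounds}. We treat the cases $k=0$ and $k=1$ separately for each of the three estimates.

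\textbf{Stability estimate.} For $k=0$ the bound is pointwise. Since $\bP(\uu)$ is an orthogonal projection, $|\bP(\uu)\vv|\le|\vv|$ a.e. Using $c\le|\uu|$, we have $\norm{\uu}_{\bL^\infty}^2\ge c^2$, so the factor $\norm{\uu}^2_{\bW^{k,\infty}}$ may be inserted at the cost of the constant $c^{-2}$. This gives \eqref{eq: stability projection} for $p\in\{2,\infty\}$.

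For $k=1$ we differentiate the product:
\begin{equation*}
\nabla\big[(\bN(\uu)\cdot\vv)\,\bN(\uu)\big]=(\nabla\bN(\uu)^T\vv)\otimes\bN(\uu)+(\bN(\uu)\cdot\vv)\,\nabla\bN(\uu)+\big(\bN(\uu)\otimes\bN(\uu)\big)\nabla\vv .
\end{equation*}
The terms are bounded as follows.
\begin{itemize}
\item Since $|\bN(\uu)|=1$, the last term is controlled by $|\nabla\vv|$.
\item The first two terms are controlled by $2|\nabla\bN(\uu)|\,|\vv|$. By \eqref{eq: normalization bounds i}, $\norm{\nabla\bN(\uu)}_{\bL^\infty}\le C_\bN\norm{\nabla\uu}_{\bL^\infty}$.
\end{itemize}
Taking $\bL^p$ norms yields $\norm{\bP(\uu)\vv}_{\bW^{1,p}}\lesssim\norm{\vv}_{\bW^{1,p}}\,(1+\norm{\uu}_{\bW^{1,\infty}})$. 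Finally, $1+\norm{\uu}_{\bW^{1,\infty}}\le(c^{-2}+c^{-1})\norm{\uu}_{\bW^{1,\infty}}^2$, again because $\norm{\uu}_{\bW^{1,\infty}}\ge c$. This gives the quadratic factor with a constant depending only on $c$.

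\textbf{Lipschitz estimates.} Write $\bN=\bN(\uu)$ and $\widetilde\bN=\bN(\widetilde\uu)$. Then
\begin{equation*}
(\bP(\uu)-\bP(\widetilde\uu))\vv=-\big((\bN-\widetilde\bN)\cdot\vv\big)\,\bN-\big(\widetilde\bN\cdot\vv\big)\,(\bN-\widetilde\bN).
\end{equation*}

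For $k=0$, the modulus is pointwise at most $2|\bN-\widetilde\bN|\,|\vv|$.
\begin{itemize}
\item For \eqref{eq: projection}, take $\norm{\vv}_{\bL^\infty}$ out of the integral and apply \eqref{eq: normalization bounds 2} with $k=0$, $p=2$.
\item For \eqref{eq: projection L1}, apply Cauchy--Schwarz and then \eqref{eq: normalization bounds 2} with $k=0$, $p=2$.
\end{itemize}

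For $k=1$, differentiate each of the two terms. Every derivative falls either on $\vv$, on $\bN-\widetilde\bN$, or on $\bN$ or $\widetilde\bN$. The resulting terms are handled as follows.
\begin{itemize}
\item Terms containing the undifferentiated difference $\bN-\widetilde\bN$ multiplied by $\nabla\vv$, $\nabla\bN$ or $\nabla\widetilde\bN$ are bounded by $\norm{\vv}_{\bW^{1,\infty}}(1+2C_\bN M)\,\normO{\bN-\widetilde\bN}$. Here we use \eqref{eq: normalization bounds i} together with the bound $M$.
\item Terms containing $\nabla(\bN-\widetilde\bN)$ are bounded by $\norm{\vv}_{\bL^\infty}\normO{\nabla(\bN-\widetilde\bN)}$.
\end{itemize}
Applying \eqref{eq: normalization bounds 2} with $k=1$, $p=2$ then gives \eqref{eq: projection} with a constant depending on $M$ and $c$.

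\textbf{Main obstacle.} The one point needing care is the $k=1$ Lipschitz bound. The $\bL^2$ norm must be placed only on the difference $\bN-\widetilde\bN$ and its gradient, while every other factor is estimated in $\bL^\infty$. This is exactly why the hypothesis $\norm{\uu}_{\bW^{1,\infty}},\norm{\widetilde\uu}_{\bW^{1,\infty}}\le M$ is needed, and it explains why the constant depends on $M$.
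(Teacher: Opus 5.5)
Your proof is correct and follows essentially the same route as the paper: both write $\bP(\uu)\vv=\vv-(\bN(\uu)\cdot\vv)\,\bN(\uu)$ and express the difference as $-\big(\bN(\uu)\ee^T+\ee\,\bN(\widetilde{\uu})^T\big)\vv$ with $\ee=\bN(\uu)-\bN(\widetilde{\uu})$ (your two-term formula is exactly this). Both then differentiate for $k=1$, keep only $\ee$ and $\nabla\ee$ in $\bL^2$, and conclude with Lemma~\ref{lem: normalization bounds} and $\norm{\uu}_{\bL^\infty(\Omega)}\ge c$ to produce the quadratic factor. The only differences are cosmetic, e.g.\ the pointwise contraction $|\bP(\uu)\vv|\le|\vv|$ for $k=0$. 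As in the paper, the $k=1$ case of \eqref{eq: normalization bounds 2} that you invoke actually needs $\norm{\nabla\uu}_{\bL^\infty(\Omega)},\norm{\nabla\widetilde{\uu}}_{\bL^\infty(\Omega)}\le M$, which is harmless since the constant may depend on $M$.
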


\subsection{Nodal interpolation}\label{section: nodal interpolation}
We use the vector-valued nodal interpolation operator
\begin{align}\label{eq: nodal interpolation}
 \III_h \colon C(\overline{\Omega}; \R^3) \to \boldsymbol{\mathcal{S}}^1(\mathcal{T}_h), \quad
 \III_h \vv \coloneqq \sum_{\zz \in \mathcal{N}_h} \vv(\zz) \zeta_{\zz},
\end{align}
where $\zeta_{\zz}\in C(\overline{\Omega})$ is the hat function satisfying that $\zeta_{\zz}|_K$ is affine for all $K \in \mathcal{T}_h$ and that $\zeta_{\zz}(\zz') = \delta_{\zz,\zz'}$ for all $\zz, \zz' \in \mathcal{N}_h$. Let $\mathrm{D}_h^2$ be the $\mathcal{T}_h$-elementwise defined Hessian operator. For all $\vv\in C(\overline{\Omega};\R^3)$ with $\vv|_K\in\bH^2(K)$ and all $k\in\{0,1\}$, \cite[Section~4.4]{brennerscott} proves 
\begin{equation}\label{eq: interpolation estimate}
	\norm{\vv-\III_h\vv}_{\bH^k(\Omega)} \le \newconst{nodal_interpol} h^{2-k}\norm{\mathrm{D}_h^2\vv}_{\bL^2(\Omega)}.
\end{equation}

The following lemma from \cite{bkw2024} shows stability of the nodal interpolation operator $\III_h$ on rational expressions. For completeness, the proof is given in Appendix~\ref{section: appendix}.
\begin{lemma}[Stability of $\III_h$ on rational expressions \texorpdfstring{\cite[Lemma~2.3]{bkw2024}}{[Lemma~2.3, bkw2024]}]\label{lem: stability I_h}
Let $r\in \N_0$. Let $q_h, r_h \in C(\overline{\Omega})$ be elementwise polynomial functions, i.e., $\left.q_h\right|_K, \left.r_h\right|_K \in \boldsymbol{\mathcal{P}}_r(K)$ for all $K \in \mathcal{T}_h$, and assume that $0<c\le\left|q_h\right|\le c^{-1}$. Then, for any $k \in \{0,1\}$ and $1 \leq p \leq \infty$, there holds
\begin{equation}\label{eq: stability I_h}
\norm[\Big]{\III_h\Big(\frac{r_h}{\left|q_h\right|}\Big)}_{\bW^{k, p}(\Omega)} \leq \newconst{stab_I}\,\norm[\Big]{\frac{r_h}{\left|q_h\right|}}_{\bW^{k, p}(\Omega)}.
\end{equation}
The constant $\const{stab_I} > 0$ depends only on $\gamma$, $r$, $k$, and $p$.
\end{lemma}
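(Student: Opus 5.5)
The plan is to argue elementwise and then sum over $K\in\mathcal{T}_h$. Set $g\coloneqq r_h/|q_h|$. Since $|q_h|\ge c>0$, $g$ is continuous on $\overline\Omega$ and smooth on each element, so $\III_h g$ is well defined and $(\III_h g)|_K$ is the affine interpolant of the vertex values of $g|_K$. Because all quantities are local, it suffices to prove
\[
\norm{\III_h g}_{\bW^{k,p}(K)}\lesssim \norm{g}_{\bW^{k,p}(K)}\quad\text{for every } K\in\mathcal{T}_h,
\]
with a constant depending only on $\gamma,r,k,p$. For $p<\infty$ one then raises to the $p$-th power and sums over $K$; for $p=\infty$ one takes the maximum over $K$.

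\textbf{Case $k=0$.} On $K$ the affine function $\III_h g$ attains its maximum modulus at a vertex. Hence $\norm{\III_h g}_{\bL^\infty(K)}\le\norm{g}_{\bL^\infty(K)}$, which already settles $p=\infty$. For $p<\infty$ we have $\norm{\III_h g}_{\bL^p(K)}\le |K|^{1/p}\norm{g}_{\bL^\infty(K)}$, so it remains to establish the inverse-type estimate
\[
\norm{g}_{\bL^\infty(K)}\lesssim |K|^{-1/p}\norm{g}_{\bL^p(K)} .
\]
The route is to pull back to the reference simplex $\widehat K$ via the affine map $F_K$. Since $g$ is invariant under $q\mapsto\lambda q$, one may normalize $q$, for instance $\max_{\widehat K}|\widehat q|=1$. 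Quasi-uniformity then only enters through the Jacobian of $F_K$, which is what produces $|K|^{-1/p}$ with a $\gamma$-dependent constant.

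\textbf{Case $k=1$.} Here $\nabla(\III_h g)|_K$ is constant and is determined by the differences of $g$ along the edges of $K$. By the mean value theorem on each edge and shape regularity,
\[
|\nabla \III_h g|_K\lesssim \norm{\nabla g}_{\bL^\infty(K)} .
\]
For $p<\infty$ one again needs an inverse estimate, now in the form $\norm{\nabla g}_{\bL^\infty(K)}\lesssim|K|^{-1/p}\norm{\nabla g}_{\bL^p(K)}$. The lower-order part of the $\bW^{1,p}$ norm is handled exactly as in the case $k=0$.

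\textbf{Main obstacle: uniformity of the reference-element inverse estimates.} The key step is to show that, on $\widehat K$,
\[
\norm{\widehat g}_{\bL^\infty}\le C\norm{\widehat g}_{\bL^p}\quad\text{and}\quad\norm{\nabla\widehat g}_{\bL^\infty}\le C\norm{\nabla \widehat g}_{\bL^p}
\]
hold uniformly over the whole class $\{\widehat r/|\widehat q|\}$ with $\widehat r,\widehat q\in\boldsymbol{\mathcal{P}}_r$ and $\widehat q$ nonvanishing, so that $C=C(r,p)$. The plan is a compactness argument over the finite-dimensional, normalized set of pairs $(\widehat r,\widehat q)$, for instance with $\norm{\widehat q}=1$.

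One must, however, control the degenerate limits in which $\widehat q$ develops a zero on $\widehat K$. I expect this to be where the argument is hardest, and I am not sure it can be closed. A pair such as $\widehat q=(x,\epsilon)$, $\widehat r=\epsilon$ produces a spike of height $1$ and width about $\epsilon$, whose $\bL^p$ norm tends to $0$ as $\epsilon\to0$, so a naive compactness argument fails.

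The first thing I would try is to use that $|q_h|\ge c$ together with quasi-uniformity to rule out such near-degeneracy on a single element. The natural tool is the Taylor/inverse bound $h\norm{\nabla q_h}_{\bL^\infty(K)}\lesssim\norm{q_h}_{\bL^\infty(K)}$ applied to the pulled-back $\widehat q$. Alternatively, I would exploit the structure $g^2=|r_h|^2/|q_h|^2$, a ratio of polynomials of degree $2r$, to obtain a uniform inverse inequality.

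If neither device yields uniformity, the fallback is to let the constant depend additionally on $c$. That is weaker than the dependence on $\gamma,r,k,p$ only that is asserted in Lemma~\ref{lem: stability I_h}, and it would indicate that the statement as worded requires this extra dependence.
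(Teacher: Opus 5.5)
Your proposal stops at the step that carries the whole proof. The uniform reference-element inverse estimates are left open, so as written this is not a proof; your $p=\infty$ arguments are fine and need no $c$. Your suspicion is correct, though, and it settles the question: for $k=0$ and $p<\infty$ the lemma cannot hold with a constant independent of $c$.

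Take the reference triangle with vertices $(0,0),(1,0),(0,1)$, and set $\widehat q=(\widehat x_1,\epsilon,0)$, $\widehat r=(\epsilon,0,0)$; this is admissible with $c=\epsilon/2$. Then $\widehat g=\widehat r/|\widehat q|$ equals $(1,0,0)$ at the two vertices on $\{\widehat x_1=0\}$, so $\|\widehat{\III}\widehat g\|_{L^p}\gtrsim 1$. On the other hand, $\|\widehat g\|_{L^p}^p\lesssim\epsilon\log(2/\epsilon)\to 0$. The same happens on any fixed mesh with $r_h\equiv(\epsilon,0,0)$ and nodal values $(\phi(\zz),\epsilon,0)$, where $\phi=0$ at one node and $\phi=1$ elsewhere. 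This example satisfies the Taylor/inverse bound for $\widehat q$ and has $\widehat g^2$ a ratio of polynomials, so neither of your proposed devices can rule it out.

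The paper's proof has exactly this dependence, because its compact set $\mathcal K$ is defined through $c\le|q_h|\le c^{-1}$. So your ``fallback'' is the correct statement, and omitting $c$ is a slip in the lemma. It is harmless downstream, since Lemma~\ref{lem: stability discrete projection} lists $c$ among its dependencies.

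Once $c$ is fixed, the missing step takes two lines. The set $\{(\widehat r,\widehat q):\|\widehat r\|=1,\ c\le|\widehat q|\le c^{-1}\}$ is compact, and no zeros of $\widehat q$ can appear in limits. The ratio is continuous on it because $\|\widehat g\|_{L^p}\ge c\|\widehat r\|_{L^p}>0$. The paper applies this compactness directly to $\|\III_h g\|_{L^p(K)}/\|g\|_{L^p(K)}$, which is invariant under the affine pull-back. That plays the same role as your $L^\infty$--$L^p$ inverse estimate.

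For $k=1$ your route has a second gap. Normalizing $\widehat r$ does not keep $\|\widehat\nabla\widehat g\|_{L^p}$ away from zero; it vanishes, for instance, for constant $\widehat r,\widehat q$. So the same compactness argument does not give $\|\nabla g\|_{L^\infty(K)}\lesssim|K|^{-1/p}\|\nabla g\|_{L^p(K)}$. What closes it is that the gradient stays in the class:
\[
\nabla\frac{r_h}{|q_h|}=\frac{|q_h|^2\,\nabla r_h-r_h\,(q_h^{T}\nabla q_h)}{|q_h|^3}=\frac{R_h}{|\widetilde q_h|},\qquad \widetilde q_h\coloneqq|q_h|^2q_h .
\]
Here $R_h$ and $\widetilde q_h$ are elementwise polynomials of degree at most $3r$, and $c^3\le|\widetilde q_h|\le c^{-3}$. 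The compactness argument then applies verbatim with matrix-valued numerators. Your edge-difference bound for the constant value of $\nabla\III_h g$ on $K$ then finishes the proof.

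The paper argues differently. It subtracts the elementwise mean $\alpha_K$, applies the inverse estimate to $\III_h(g-\alpha_K)$, then uses the $k=0$ bound and Poincar\'e's inequality. Strictly speaking, $g-\alpha_K=(r_h-\alpha_K|q_h|)/|q_h|$ no longer has a polynomial numerator. That use of the $k=0$ bound is therefore not covered by the compactness over $\mathcal K$. Your gradient route, completed as above, avoids this issue.
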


Finally, we will repeatedly use the following inverse estimate \cite[Section~4.5]{brennerscott}:
\begin{equation}\label{eq: inverse estimate L^p}
		\norm{\nabla \vv_h}_{\bL^p(\Omega)} \le \newconst{inverse_est} h^{-1}\norm{\vv_h}_{\bL^p(\Omega)} \quad \text{for all}\quad \vv_h\in \boldsymbol{\mathcal{S}}^1(\mathcal{T}_h).
\end{equation}
The constants in \eqref{eq: interpolation estimate}--\eqref{eq: inverse estimate L^p} depend only on $\gamma$.
\subsection{Discrete tangential projection}\label{section:discrete projection}
Recall $\III_h$ from \eqref{eq: nodal interpolation}. Define the discrete counterpart of $\mathcal{M}^+$, where the positivity is imposed at all mesh nodes, i.e.,
$$
\mathcal{M}_h^+\coloneqq \set{\uu\in C(\overline{\Omega},\R^3)}{|\uu(\zz)| > 0 \quad \text{for every}\quad \zz\in \mathcal{N}_h}.
$$
For all $\uu\in\mathcal{M}_h^+$, we consider the discrete tangential projection onto $\bTh(\uu)$
\begin{equation*}
	\begin{aligned}
	&\bPh(\uu) \colon  \boldsymbol{\mathcal{S}}^1(\mathcal{T}_h)  \to \bTh(\uu), \quad \bP_h(\uu)\vv_h \coloneqq \III_h[\bP(\uu)\vv_h] = \sum_{\zz\in\mathcal{N}_h} \left(\bI - \frac{\uu(\zz)\otimes\uu(\zz)}{|\uu(\zz)|^2} \right)\vv_h(\zz)\zeta_{\zz}.\end{aligned}
\end{equation*}

The following lemma proves that $\mhat_h^{j+1}\in \mathcal{M}_h^+$ for every $j=0,\dots,N-1$ and hence the well-posedness of the discrete projection $\bP_h(\mhat_h^{j+1})$.
\begin{lemma}\label{lem: well-posedness discrete projection}
	For every node $\zz\in\mathcal{N}_h$, Algorithm \ref{alg: full discr} guarantees
	\begin{enumerate}[label=\rm(\alph*)]
		\item $|\mm_h^{j+1}(\zz)|^2 = \frac{4}{3} |\mm_h^{j}(\zz)|^2 -\frac{1}{3} |\mm_h^{j-1}(\zz)|^2 + \tau^4 |\d_t^2\mm_h^{j+1}(\zz)|^2$ for all $j=2,\dots,N-~1$;
		\item $|\mm_h^{j+1}(\zz)|^2 \geq |\mm_h^{j}(\zz)|^2\geq 1$ for all $j=0,\dots,N-1$;
		\item\label{point c} $|\mhat_h^{j+1}(\zz)|^2\geq 1$ for all $j=0,\dots,N-1$.
	\end{enumerate}
	This implies that the discrete projection $\bPh(\mhat_h^{j+1})$ and the discrete tangent space $\bT_h(\mhat_h^{j+1})$ are well-defined for every $j=0,\dots,N-1$.
\end{lemma}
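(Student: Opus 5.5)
The plan is to work node by node, using only that at each node $\zz\in\NN_h$ the update $\vv_h^j(\zz)$ is orthogonal to the vector defining the tangent space, and to turn this orthogonality into an algebraic BDF2-type identity for squared lengths. Fix a node $\zz$. For $j\ge1$ I abbreviate $\boldsymbol a\coloneqq\mm_h^{j+1}(\zz)$, $\boldsymbol b\coloneqq\mm_h^{j}(\zz)$ and $\boldsymbol c\coloneqq\mm_h^{j-1}(\zz)$. Step~\eqref{item: def mmhj} of Algorithm~\ref{alg: full discr} says $3\boldsymbol a-4\boldsymbol b+\boldsymbol c=2\tau\vv_h^j(\zz)$. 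Since $\vv_h^j\in\bTh(\mhat_h^{j+1})$, this gives
\begin{equation*}
(3\boldsymbol a-4\boldsymbol b+\boldsymbol c)\cdot(2\boldsymbol b-\boldsymbol c)=0 .
\end{equation*}
Of course this relies on $\bTh(\mhat_h^{j+1})$ being defined, which is what (c) secures; the argument is therefore an induction in $j$ in which (a)--(c) are established together.

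For (a), I expand $|\boldsymbol a-2\boldsymbol b+\boldsymbol c|^2$, where $\boldsymbol a-2\boldsymbol b+\boldsymbol c=\tau^2\d_t^2\mm_h^{j+1}(\zz)$. A direct computation gives
\begin{equation*}
|\boldsymbol a|^2-\tfrac43|\boldsymbol b|^2+\tfrac13|\boldsymbol c|^2-|\boldsymbol a-2\boldsymbol b+\boldsymbol c|^2
=\tfrac23\big(6\boldsymbol a\cdot\boldsymbol b-3\boldsymbol a\cdot\boldsymbol c-8|\boldsymbol b|^2+6\boldsymbol b\cdot\boldsymbol c-|\boldsymbol c|^2\big)
=\tfrac23(3\boldsymbol a-4\boldsymbol b+\boldsymbol c)\cdot(2\boldsymbol b-\boldsymbol c).
\end{equation*}
By the orthogonality above the right-hand side vanishes, which is exactly (a). The identity in fact holds for every $j\ge1$, not only for $j\ge2$. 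The only care needed here is checking the coefficients of this polynomial identity.

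For (b), I argue by induction. Since $\mm_h^0\in\boldsymbol{\MM}_h$, we have $|\mm_h^0(\zz)|=1$. Moreover, $\mm_h^1=\mm_h^0+\tau\vv_h^0$ with $\vv_h^0(\zz)\perp\mm_h^0(\zz)$, so
\begin{equation*}
|\mm_h^1(\zz)|^2=1+\tau^2|\vv_h^0(\zz)|^2\ge|\mm_h^0(\zz)|^2 .
\end{equation*}
For the induction step, (a) can be rewritten as
\begin{equation*}
|\boldsymbol a|^2-|\boldsymbol b|^2=\tfrac13\big(|\boldsymbol b|^2-|\boldsymbol c|^2\big)+\tau^4|\d_t^2\mm_h^{j+1}(\zz)|^2 .
\end{equation*}
The hypothesis $|\boldsymbol b|\ge|\boldsymbol c|$ therefore yields $|\boldsymbol a|\ge|\boldsymbol b|$, and chaining down to $j=0$ gives $|\boldsymbol b|\ge1$.

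For (c), the case $j=0$ is $\mhat_h^1=\mm_h^0$, which has unit length at the nodes. For $j\ge1$, I use the parallelogram-type identity
\begin{equation*}
|2\boldsymbol b-\boldsymbol c|^2=2|\boldsymbol b|^2-|\boldsymbol c|^2+2|\boldsymbol b-\boldsymbol c|^2 .
\end{equation*}
Together with $|\boldsymbol b|\ge|\boldsymbol c|$ from (b) at the previous index, this gives $|\mhat_h^{j+1}(\zz)|^2\ge|\boldsymbol b|^2\ge1$. Hence $\mhat_h^{j+1}\in\mathcal M_h^+$, so $\bPh(\mhat_h^{j+1})$ and $\bTh(\mhat_h^{j+1})$ are well-defined.

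The logical order is the following. (c) at index $j$ needs (b) up to index $j$. That makes $\vv_h^j$ well-defined, which gives the orthogonality, which gives (a) and then (b) at index $j+1$.
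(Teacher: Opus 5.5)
Your proof is correct and follows the paper's argument for (b) and (c): the same induction driven by (a) with base case $|\mm_h^1(\zz)|^2=1+\tau^2|\vv_h^0(\zz)|^2$, and the same lower bound for $|2\mm_h^j(\zz)-\mm_h^{j-1}(\zz)|^2$. Your parallelogram identity is exactly the paper's use of $2\mm_h^j(\zz)\cdot\mm_h^{j-1}(\zz)\le|\mm_h^j(\zz)|^2+|\mm_h^{j-1}(\zz)|^2$.

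The only difference is that the paper cites (a) from its companion work, whereas you derive it directly from the nodal orthogonality $(3\boldsymbol a-4\boldsymbol b+\boldsymbol c)\cdot(2\boldsymbol b-\boldsymbol c)=0$, and your coefficient check is right. Your remark that (a) already holds for $j=1$ is exactly what the induction step from $|\mm_h^1(\zz)|\ge|\mm_h^0(\zz)|$ to $|\mm_h^2(\zz)|\ge|\mm_h^1(\zz)|$ requires, a point the paper leaves implicit.
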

\begin{proof}
The statement of {\rm (a)} is found in \cite[Equation (40)]{part1}. It follows from {\rm (a)} that
\begin{align*}
	|\mm_h^{j+1}(\zz)|^2 &\stackrel{{\rm (a)}}{\geq} |\mm_h^{j}(\zz)|^2 + \frac{1}{3} \big(|\mm_h^{j}(\zz)|^2 -|\mm_h^{j-1}(\zz)|^2\big).
\end{align*}
Since $|\mm_h^1(\zz)|^2 = |\mm_h^0(\zz)|^2 + \tau^2 |\vv_h^0(\zz)|^2 \ge |\mm_h^0(\zz)|^2 = 1$, induction on $j$ thus proves~{\rm (b)}.
Moreover, elementary calculation yields
$$
2\mm_h^{j}(\zz)\cdot\mm_h^{j-1}(\zz) \leq |\mm_h^{j}(\zz)|^2 + |\mm_h^{j-1}(\zz)|^2,
$$
and hence
\begin{align*}
	|\mhat_h^{j+1}(\zz)|^2 &= \left|2\mm_h^{j}(\zz)-\mm_h^{j-1}(\zz)\right|^2 = 4|\mm_h^{j}(\zz)|^2 - 4\mm_h^{j}(\zz)\cdot\mm_h^{j-1}(\zz) + |\mm_h^{j-1}(\zz)|^2\\
	&\geq 4|\mm_h^{j}(\zz)|^2 - 2|\mm_h^{j}(\zz)|^2 - 2|\mm_h^{j-1}(\zz)|^2 + |\mm_h^{j-1}(\zz)|^2\\
	&= |\mm_h^{j}(\zz)|^2+ \left(|\mm_h^{j}(\zz)|^2 - |\mm_h^{j-1}(\zz)|^2\right)\stackrel{{\rm (b)}}{\geq} |\mm_h^{j}(\zz)|^2 \stackrel{{\rm (b)}}{\geq} 1.
\end{align*}
This concludes also the proof of \textit{{\rm (c)}}.
\end{proof}

The next lemma states that the discrete projection $\bPh(\cdot)$ approximates the orthogonal projection $\bP(\cdot)$ with optimal order.
\begin{lemma}\label{lem: approximation discrete projection}
Let $k\in\{0,1\}$, $\uu_h\in\boldsymbol{\mathcal{S}}^1(\mathcal{T}_h)$ and $c>0$ such that $0<c\le |\uu_h|\le c^{-1}$ a.e.~in~$\Omega$. 
Then, there holds
\begin{equation}\label{eq: approximation discrete projection}
	\norm{(\bP(\uu_h)-\bPh(\uu_h))\vv_h}_{\bH^k(\Omega)}\le \newconst{P_Ph_approx} h^{2-k}\norm{\vv_h}_{\bH^1(\Omega)}\norm{\uu_h}^2_{\bW^{1,\infty}(\Omega)}\quad\text{for all $\vv_h\in\boldsymbol{\mathcal{S}}^1(\mathcal{T}_h)$,}
\end{equation}
where the constant $\const{P_Ph_approx}>0$ depends only on $\gamma$ and $c$.
\end{lemma}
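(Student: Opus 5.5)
The key observation is that $\bPh(\uu_h)\vv_h = \III_h[\bP(\uu_h)\vv_h]$, so the difference $(\bP(\uu_h)-\bPh(\uu_h))\vv_h$ is exactly the nodal interpolation error of the function $\ww \coloneqq \bP(\uu_h)\vv_h$. This function is continuous on $\overline\Omega$, since $|\uu_h|\ge c>0$ everywhere. It is also smooth on each element $K\in\mathcal{T}_h$, being a rational expression in affine functions. Hence the elementwise interpolation estimate \eqref{eq: interpolation estimate} applies. It gives
\begin{equation*}
\norm{(\bP(\uu_h)-\bPh(\uu_h))\vv_h}_{\bH^k(\Omega)} \le \const{nodal_interpol}\, h^{2-k}\,\norm{\mathrm{D}_h^2[\bP(\uu_h)\vv_h]}_{\bL^2(\Omega)},
\end{equation*}
and everything reduces to bounding the broken Hessian of $\ww$ by $\norm{\vv_h}_{\bH^1(\Omega)}\norm{\uu_h}^2_{\bW^{1,\infty}(\Omega)}$.

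To do so, write $\bP(\uu_h)\vv_h = \vv_h - (\bN(\uu_h)\cdot\vv_h)\,\bN(\uu_h)$ and apply the Leibniz rule elementwise. Because $\vv_h$ is affine on each $K$, we have $\mathrm{D}_h^2\vv_h=0$. The remaining terms are of three types:
\begin{itemize}
\item terms with both derivatives on the normalizations, bounded by $|\mathrm{D}_h^2\bN(\uu_h)|\,|\bN(\uu_h)|\,|\vv_h|$ or $|\nabla\bN(\uu_h)|^2|\vv_h|$;
\item mixed terms, bounded by $|\nabla\bN(\uu_h)|\,|\bN(\uu_h)|\,|\nabla\vv_h|$;
\item terms with $\mathrm{D}_h^2\vv_h$, which vanish.
\end{itemize}
Since $|\bN(\uu_h)|=1$ pointwise, Lemma~\ref{lem: normalization bounds} applies: \eqref{eq: normalization bounds i} gives $\norm{\nabla\bN(\uu_h)}_{\bL^\infty}\le C_\bN\norm{\nabla\uu_h}_{\bL^\infty}$, and \eqref{eq: normalization bounds iii} gives $\norm{\mathrm{D}_h^2\bN(\uu_h)}_{\bL^\infty}\le C_\bN\norm{\nabla\uu_h}^2_{\bL^\infty}$. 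Taking $L^\infty$ bounds on the $\uu_h$-factors and $L^2$ norms on the $\vv_h$-factors yields
\begin{equation*}
\norm{\mathrm{D}_h^2\ww}_{\bL^2(\Omega)}\lesssim \norm{\nabla\uu_h}_{\bL^\infty}^2\normO{\vv_h} + \norm{\nabla\uu_h}_{\bL^\infty}\normO{\nabla\vv_h}.
\end{equation*}

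It remains to put this in the form of the statement. The first term is already of the form $\norm{\uu_h}^2_{\bW^{1,\infty}}\norm{\vv_h}_{\bH^1}$. The mixed term carries only one power of $\norm{\uu_h}_{\bW^{1,\infty}}$. To upgrade it, note that $|\uu_h|\ge c$ gives $\norm{\uu_h}_{\bW^{1,\infty}}\ge\norm{\uu_h}_{\bL^\infty}\ge c$. Therefore $\norm{\nabla\uu_h}_{\bL^\infty}\le c^{-1}\norm{\uu_h}^2_{\bW^{1,\infty}}$, so the mixed term is also bounded by $\norm{\uu_h}^2_{\bW^{1,\infty}}\norm{\vv_h}_{\bH^1}$. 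The resulting constant depends only on $c$ (through $C_\bN$) and on $\gamma$ (through $\const{nodal_interpol}$), as claimed.

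I expect the only real obstacle to be the elementwise Hessian of the normalization, i.e.\ the estimate \eqref{eq: normalization bounds iii}. The point there is that no $\mathrm{D}_h^2\uu_h$ term appears, because $\uu_h$ is piecewise affine. Since that estimate is available from Lemma~\ref{lem: normalization bounds}, the remaining work is bookkeeping of the product-rule terms.
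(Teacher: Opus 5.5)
Your proposal is correct and follows essentially the same route as the paper. Both arguments apply the nodal interpolation estimate to $\bP(\uu_h)\vv_h$, expand $(\bN(\uu_h)\cdot\vv_h)\bN(\uu_h)$ by the Leibniz rule using that $\vv_h$ is elementwise affine, and bound the derivatives of $\bN(\uu_h)$ via Lemma~\ref{lem: normalization bounds}. The only cosmetic difference is that you upgrade the mixed term using $\norm{\uu_h}_{\bW^{1,\infty}(\Omega)}\ge c$, whereas the paper uses Young's inequality together with $\norm{\bN(\uu_h)}_{\bL^\infty(\Omega)}=1$; these amount to the same thing.
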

\begin{proof}
We prove \eqref{eq: approximation discrete projection} elementwise for all $K\in\mathcal{T}_h$ and $\vv_h\in \boldsymbol{\mathcal{S}}^1(\mathcal{T}_h)$. Thanks to the nodal interpolation estimate (see, e.g., \cite[Theorem 4.4.4, Theorem 4.4.20]{brennerscott}), we have
\begin{align*}
	\norm{(\bP(\uu_h)-\bPh(\uu_h))\vv_h}_{\bH^k(K)} &=\norm{\bP(\uu_h)\vv_h-\III_h(\bP(\uu_h)\vv_h)}_{\bH^k(K)}\\
	&\lesssim h^{2-k}\norm{\mathrm{D}_h^2(\bP(\uu_h)\vv_h)}_{\bL^2(K)},
\end{align*}
where the hidden constant depends only on $\gamma$.
Note that
$$
\bP(\uu_h)\vv_h=\vv_h-\dfrac{({\uu_h}\cdot{\vv_h})\uu_h}{|\uu_h|^2}=\vv_h - (\bN(\uu_h)\cdot\vv_h)\,\bN(\uu_h).
$$
Since $\vv_h$ is linear on $K$, it follows that
\begin{align*}
	\begin{split}
		\partial_i\partial_j&\big((\bN(\uu_h)\cdot \vv_h)\bN(\uu_h)\big)\\
		&\mkern-20mu=\partial_i\big[(\partial_j\bN(\uu_h)\cdot \vv_h)\bN(\uu_h) + (\bN(\uu_h)\cdot \partial_j\vv_h)\bN(\uu_h)+ (\bN(\uu_h)\cdot \vv_h)\partial_j\bN(\uu_h)\big]\\
		&\mkern-20mu= (\partial_i\partial_j\bN(\uu_h)\cdot \vv_h)\bN(\uu_h) + (\partial_j \bN(\uu_h) \cdot \partial_i\vv_h)\bN(\uu_h) + (\partial_j\bN(\uu_h)\cdot \vv_h)\partial_i\bN(\uu_h)\\
		 &+(\partial_i\bN(\uu_h)\cdot \partial_j\vv_h)\bN(\uu_h) + (\bN(\uu_h)\cdot \partial_j\vv_h)\partial_i\bN(\uu_h)\\
		&+ (\partial_i\bN(\uu_h)\cdot\vv_h)\partial_j\bN(\uu_h) + (\bN(\uu_h)\cdot \partial_i\vv_h)\partial_j\bN(\uu_h) + (\bN(\uu_h)\cdot \vv_h)\partial_i\partial_j\bN(\uu_h).
	\end{split}
\end{align*}
Noting that $\norm{\bN(\uu_h)}_{\bL^\infty(K)}=1$ and using Young's inequality, this implies that
\begin{align*}
	\norm{\mathrm{D}_h^2(\bP(\uu_h)\vv_h)}_{\bL^2(K)}&\lesssim \norm{\vv_h}_{\bH^1(K)}\big(\norm{\mathrm{D}_h^2\bN(\uu_h)}_{\bL^\infty(K)}\norm{\bN(\uu_h)}_{\bL^\infty(K)}\\
	&\mkern+117mu+\norm{\nabla\bN(\uu_h)}_{\bL^\infty(K)}\norm{\bN(\uu_h)}_{\bL^\infty(K)}+\norm{\nabla\bN(\uu_h)}^2_{\bL^\infty(K)}\big)\\
	&\lesssim \norm{\vv_h}_{\bH^1(K)}\big(\norm{\mathrm{D}_h^2\bN(\uu_h)}_{\bL^\infty(K)}+\norm{\bN(\uu_h)}^2_{\bL^\infty(K)}+\norm{\nabla\bN(\uu_h)}^2_{\bL^\infty(K)}\big)\\
	&\lesssim \norm{\vv_h}_{\bH^1(K)}\big(\norm{\mathrm{D}_h^2\bN(\uu_h)}_{\bL^\infty(K)}+\norm{\bN(\uu_h)}^2_{\bW^{1,\infty}(K)}\big).
\end{align*} Summing over all elements $K\in\mathcal{T}_h$ and applying \eqref{eq: normalization bounds}, we conclude the proof.
\end{proof}
Next, we provide stability results for $\bPh$, which generalize \cite[Lemma 3.4--3.5]{bkw2024}.
\begin{lemma}\label{lem: stability discrete projection}
	Let $k\in\{0,1\}, p\in\{2,\infty\}.$ Let $M>0, c>0$ and $\uu_h, \widetilde{\uu}_h\in \boldsymbol{\mathcal{S}}^1(\mathcal{T}_h)$ such that
	$$
	0<c\le |\uu_h|,|\widetilde{\uu}_h|\le c^{-1}\, \text{a.e. in }\, \Omega \qquad\text{and}\qquad\norm{\uu_h}_{\bW^{k,\infty}(\Omega)}, \norm{\widetilde{\uu}_h}_{\bW^{k,\infty}(\Omega)}\le M.
	$$	
	Then, the discrete projection $\bPh(\cdot)$ satisfies
	\begin{equation}\label{eq: stability discrete projection}
		\norm{\bPh(\uu_h)\vv_h}_{\bW^{k,p}(\Omega)}\le \newconst{stab_Ph} \norm{\vv_h}_{\bW^{k,p}(\Omega)}\norm{\uu_h}_{\bW^{k,\infty}(\Omega)}^2,\quad\text{for all}\quad \vv_h\in \boldsymbol{\mathcal{S}}^1(\mathcal{T}_h),
	\end{equation}
	where the constant $\const{stab_Ph}>0$ depends only on $\gamma$, $c$, $k$, and $p$.	Moreover, there holds
	\begin{equation}\label{eq: discrete projection lipschitz estimate}
		\norm{(\bPh(\uu_h)-\bPh(\widetilde{\uu}_h))\vv_h}_{\bH^k(\Omega)}\le \newconst{lipsch_Ph} \norm{\vv_h}_{\bW^{k,\infty}(\Omega)}\norm{\uu_h-\widetilde{\uu}_h}_{\bH^k(\Omega)}
	\end{equation}
	and
	\begin{equation}\label{eq: discrete projection lipschitz estimate L1}
		\norm{(\bPh(\uu_h)-\bPh(\widetilde{\uu}_h))\vv_h}_{\bL^1(\Omega)}\le \const{lipsch_Ph} \normO{\vv_h}\normO{\uu_h-\widetilde{\uu}_h}
	\end{equation}
where the constant $\const{lipsch_Ph}>0$ depends only on $\gamma$, $M$, $c$, and $k$.
\end{lemma}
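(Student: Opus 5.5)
The plan is to reduce everything to elementwise estimates for the $\mathcal{T}_h$-piecewise smooth function $\bP(\uu_h)\vv_h$, followed by the $\bW^{k,p}$-stability of $\III_h$ on rational expressions from Lemma~\ref{lem: stability I_h}. By definition $\bPh(\uu_h)\vv_h=\III_h[\bP(\uu_h)\vv_h]$. Moreover,
\[
\bP(\uu_h)\vv_h=\vv_h-\frac{(\uu_h\cdot\vv_h)\,\uu_h}{|\uu_h|^2}.
\]
This is of the form $r_h/|q_h|$ with the polynomial choices $r_h=\vv_h|\uu_h|^2-(\uu_h\cdot\vv_h)\uu_h\in\mathcal{P}_3$ and $q_h=|\uu_h|^2$. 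The latter satisfies $c^2\le |q_h|\le c^{-2}$, since $|q_h|=|\uu_h|^2$.

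\textbf{Stability \eqref{eq: stability discrete projection}.} Lemma~\ref{lem: stability I_h}, applied componentwise with $r=3$, gives
\[
\norm{\bPh(\uu_h)\vv_h}_{\bW^{k,p}}\lesssim \norm{\bP(\uu_h)\vv_h}_{\bW^{k,p}}.
\]
It then remains to bound $\bP(\uu_h)\vv_h$ in $\bW^{k,p}$. Lemma~\ref{lem: projection} \eqref{eq: stability projection} requires $\vv\in\bW^{k,\infty}$ and $\uu\in\bW^{k,\infty}$, both of which hold for finite element functions. Alternatively I argue directly. For $k=0$, $|\bP(\uu_h)\vv_h|\le|\vv_h|$ pointwise. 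For $k=1$, the product rule gives
\[
\nabla(\bP(\uu_h)\vv_h)=\bP(\uu_h)\nabla\vv_h-\nabla\big(\bN(\uu_h)\otimes\bN(\uu_h)\big)\vv_h,
\]
and \eqref{eq: normalization bounds i} gives $\norm{\nabla\bN(\uu_h)}_{\bL^\infty}\lesssim\norm{\nabla\uu_h}_{\bL^\infty}$. Since $|\uu_h|\ge c$ implies $\norm{\uu_h}_{\bW^{k,\infty}}\ge c$, the lower-order terms can be absorbed into the factor $\norm{\uu_h}^2_{\bW^{k,\infty}}$ after adjusting the constant depending on $c$. This yields \eqref{eq: stability discrete projection}, with constants depending only on $\gamma,c,k,p$.

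\textbf{Lipschitz estimates \eqref{eq: discrete projection lipschitz estimate}--\eqref{eq: discrete projection lipschitz estimate L1}.} I write
\[
(\bPh(\uu_h)-\bPh(\widetilde\uu_h))\vv_h=\III_h\big[(\bP(\uu_h)-\bP(\widetilde\uu_h))\vv_h\big].
\]
The difference is again an elementwise rational expression with common denominator $|\uu_h|^2|\widetilde\uu_h|^2$, to which Lemma~\ref{lem: stability I_h} applies (as $q_h=|\uu_h|^2|\widetilde\uu_h|^2$ is polynomial and bounded above and below). This reduces the $\bH^k$ estimate to \eqref{eq: projection}, with the constant depending on $M$ and $c$.

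For the $\bL^1$ bound, Lemma~\ref{lem: stability I_h} with $p=1$, $k=0$ reduces it to \eqref{eq: projection L1}. A more robust alternative is nodewise: the hat functions $\zeta_{\zz}$ are nonnegative, so the $\bL^1$ norm of the interpolant is controlled by $\sum_{\zz}|\Omega_\zz|\,|(\bP(\uu_h(\zz))-\bP(\widetilde\uu_h(\zz)))\vv_h(\zz)|$. Using $|\bN(a)-\bN(b)|\lesssim|a-b|$ on the annulus $c\le|\cdot|\le c^{-1}$, this sum is bounded by $\sum_\zz|\Omega_\zz||\vv_h(\zz)||\uu_h(\zz)-\widetilde\uu_h(\zz)|$. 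Cauchy--Schwarz together with the discrete norm equivalence $\sum_\zz|\Omega_\zz||\ww_h(\zz)|^2\simeq\normO{\ww_h}^2$ on quasi-uniform meshes then gives \eqref{eq: discrete projection lipschitz estimate L1}.

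\textbf{Main obstacle.} The main difficulty is the $k=1$ Lipschitz estimate, specifically checking that Lemma~\ref{lem: stability I_h} legitimately applies to the difference expression; it needs a single rational form $r_h/|q_h|$. If it does not fit, I would instead split
\[
\III_h[w]=w-(w-\III_h w).
\]
The first term is handled by \eqref{eq: projection}. For the second, the interpolation error is bounded elementwise by $h\norm{\mathrm{D}_h^2 w}_{\bL^2(K)}$, with $\mathrm{D}_h^2 w$ estimated via the product rule and \eqref{eq: normalization bounds iii}. An inverse estimate \eqref{eq: inverse estimate L^p} on the polynomial factors then returns the $\bH^1$ norm of $\uu_h-\widetilde\uu_h$.
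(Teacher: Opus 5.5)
Your proposal is correct and follows the paper's proof: both write $\bPh(\uu_h)\vv_h$ and $(\bPh(\uu_h)-\bPh(\widetilde\uu_h))\vv_h$ as $\III_h$ applied to elementwise rational expressions, use Lemma~\ref{lem: stability I_h} to remove the interpolant, and conclude with \eqref{eq: stability projection}, \eqref{eq: projection} and \eqref{eq: projection L1}. Your explicit denominators $|\uu_h|^2$ and $|\uu_h|^2|\widetilde\uu_h|^2$ (bounded between $c^2$ and $c^{-2}$, respectively $c^4$ and $c^{-4}$) already resolve the obstacle you flag, so the fallback splitting is not needed --- which is fortunate, because as sketched it would require a Lipschitz-type bound on $\mathrm{D}_h^2\bN(\cdot)$, and \eqref{eq: normalization bounds iii} does not provide one.
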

\begin{proof}
	To prove \eqref{eq: stability discrete projection} it suffices to note that, by definition, it holds that 
	\begin{align*}
		\bPh(\uu_h)\vv_h = \III_h[\bP(\uu_h)\vv_h].
	\end{align*} The stability of the nodal interpolation operator from Lemma~\ref{lem: stability I_h} implies that
	\begin{align*}
		\norm{\bPh(\uu_h)\vv_h}_{\bW^{k,p}(\Omega)} \stackrel{\eqref{eq: stability I_h}}{\lesssim} \norm{\bP(\uu_h)\vv_h}_{\bW^{k,p}(\Omega)}\stackrel{\eqref{eq: stability projection}}{\lesssim}\norm{\vv_h}_{\bW^{k,p}(\Omega)}\norm{\uu_h}_{\bW^{k,\infty}(\Omega)}^2,
	\end{align*}
	where the hidden constant depends only on $\gamma$, $c$, $k$, and $p$. To prove \eqref{eq: discrete projection lipschitz estimate}, note that
	\begin{align*}
		\norm{(\bP_h(\uu_h) - \bP_h(\widetilde{\uu}_h))\vv_h}_{\bH^k(\Omega)} &= \norm{\III_h\big[(\bP(\uu_h)-\bP(\widetilde{\uu}_h))\vv_h\big]}_{\bH^k(\Omega)}\\
		&\mkern-90mu\stackrel{\eqref{eq: stability I_h}}{\lesssim} \norm{(\bP(\uu_h)-\bP(\widetilde{\uu}_h))\vv_h}_{\bH^k(\Omega)}
		\stackrel{\eqref{eq: projection}}{\lesssim}\norm{\vv_h}_{\bW^{k,\infty}(\Omega)}\norm{\uu_h - \widetilde{\uu}_h}_{\bH^k(\Omega)}.
	\end{align*}
	Similarly, it holds that
	\begin{align*}
		\norm{(\bP_h(\uu_h) - \bP_h(\widetilde{\uu}_h))\vv_h}_{\bL^1(\Omega)} &\stackrel{\eqref{eq: stability I_h}}{\lesssim} \norm{(\bP(\uu_h)-\bP(\widetilde{\uu}_h))\vv_h}_{\bL^1(\Omega)} \stackrel{\eqref{eq: projection L1}}{\lesssim} \normO{\vv_h}\normO{\uu_h - \widetilde{\uu}_h}.
	\end{align*}
	This concludes the proof.
	\end{proof}
\subsection{Ritz projection}\label{section: Ritz projection}
We recall the Ritz projection associated with the Poisson--Neumann problem and state some properties required for proving the main result. Specifically, we define $R_h\colon H^1(\Omega)\to \mathcal{S}^1(\mathcal{T}_h)$ by, for all $\varphi \in H^1(\Omega)$ and all $\psi_h\in \SS^1(\mathcal{T}_h)$,
\begin{equation}\label{eq: Ritz projection}
\scalarproductO{\nabla R_h \varphi}{\nabla\psi_h} + \scalarproductO{R_h\varphi}{1}\scalarproductO{\psi_h}{1} = \scalarproductO{\nabla\varphi}{\nabla\psi_h} + \scalarproductO{\varphi}{1}\scalarproductO{\psi_h}{1}.
\end{equation}
First, note that testing with $\psi\equiv1$ implies the mean-preserving property of the Ritz projection 
$$
\scalarproductO{R_h\varphi}{1}=\scalarproductO{\varphi}{1}.
$$
Second, considering test functions $\widetilde{\psi}\coloneqq \psi - |\Omega|^{-1}\scalarproductO{\psi}{1}$ satisfying $\scalarproductO{\widetilde{\psi}}{1}=0$ and $\nabla\widetilde{\psi}=\nabla\psi$, we see that \eqref{eq: Ritz projection} implies
\begin{equation}\label{eq: Ritz projection 2}
	\scalarproductO{\nabla R_h \varphi}{\nabla\psi_h} = \scalarproductO{\nabla\varphi}{\nabla\psi_h} \quad\text{ for all }\psi_h \in \mathcal{S}^1(\mathcal{T}_h).
\end{equation}
Third, if $\varphi\in H^2(\Omega)$ with $\nabla \varphi\cdot n = 0$ on $\partial\Omega$, then integration by parts yields
\begin{equation}\label{eq: Ritz projection 3}
	\scalarproductO{\nabla R_h \varphi}{\nabla\psi_h} = -\scalarproductO{\Delta \varphi}{\psi_h} \quad\text{ for all }\psi_h \in \mathcal{S}^1(\mathcal{T}_h).
\end{equation}
Finally, we collect some properties of the Ritz projection $R_h$, which are used later on; see~\cite[Theorem 8.1.11, Corollary 8.1.12]{brennerscott} or \cite[Section 2.1]{ciarlet} for more details. Specifically, we require that the standard approximation estimates hold:
\begin{align}
    \norm{v-R_hv}_{H^1(\Omega)} &\le C_{\rm R} h \norm{v}_{H^2(\Omega)} \quad \text{for all} \quad v\in H^2(\Omega),\label{eq: Ritz_error_1}\\
    \norm{v-R_hv}_{L^\infty(\Omega)} &\le C_{\rm R} h \norm{v}_{W^{2,\infty}(\Omega)} \quad \text{for all} \quad v\in W^{2,\infty}(\Omega).\label{eq: Ritz_error_2}
\end{align}
Moreover, we require the following stability estimate in the $W^{1,\infty}$-norm:
\begin{align}
    \norm{R_h v}_{W^{1,\infty}(\Omega)} &\le C_{\rm R} \norm{v}_{W^{1,\infty}(\Omega)} \quad \text{for all} \quad v\in W^{1,\infty}(\Omega).\label{eq: Ritz_stability}
\end{align}
\subsection{Notation}\label{section:notation}
Firstly, let $\mm_\star^n \coloneqq \mm(t_n)$ denote the exact solution of the LLG equation \eqref{eq: LLG system} at time $t_n$ and analogously 
\begin{subequations}\label{eq: exact solution quantities}
\begin{align}
	\widehat{\mm}_\star^n &\coloneqq 2\mm_\star^{n-1} - \mm_\star^{n-2},\label{eq: m_hat_star}\\
	\vv_\star^{n-1} &\coloneqq \bP(\widehat{\mm}_\star^{n})\frac{1}{\tau}\Big(\frac{3}{2}\mm_\star^{n} - 2\mm_\star^{n-1} + \frac{1}{2}\mm_\star^{n-2}\Big) \in \bT(\widehat{\mm}_\star^n).\label{eq: v_star}
\end{align}
\end{subequations}
Note that
$$
|\mhat_\star^n|=|2\mm_\star^{n-1}-\mm_\star^{n-2}|\begin{cases}
\ge 2 |\mm_\star^{n-1}| - |\mm_\star^{n-2}| = 1\\
\le 2 |\mm_\star^{n-1}| + |\mm_\star^{n-2}| = 3
  \end{cases}
$$ 
Therefore, the projection $\bP(\mhat_\star^n)$ and hence also $\vv_\star^{n-1}$ are well-defined.

With the vector-valued Ritz projection $\bR_h \coloneqq \mathbf{I}\otimes R_h\colon \bH^1(\Omega)\to \boldsymbol{\mathcal{S}}^1(\mathcal{T}_h)$, we will also consider the following quantities, related to the exact solution:
\begin{subequations}\label{eq: Ritz projections}
\begin{align}
\mm_{\star,h}^n 
&\coloneqq \bR_h\mm_\star^n,
\label{eq: m_star_ritz}
\\
\widehat{\mm}_{\star,h}^n 
&\coloneqq 2\mm_{\star,h}^{n-1} - \mm_{\star,h}^{n-2},
\label{eq: m_hat_star_ritz}
\\
\vv_{\star,h}^{n-1} 
&\coloneqq \bPh(\widehat{\mm}_{\star,h}^{n})\frac{1}{\tau}
\Big(\frac{3}{2}\mm_{\star,h}^{n}
- 2\mm_{\star,h}^{n-1}
+ \frac{1}{2}\mm_{\star,h}^{n-2}\Big)
\in \bTh(\widehat{\mm}_{\star,h}^n).
\label{eq: v_star_ritz}
\end{align}
\end{subequations}
The following lemma shows uniform bounds for the quantity $|\mstarhhat^n(\xx)|$ for all $\xx\in\Omega$ and sufficiently small $h$, which implies that $\bP_h(\mstarhhat^n)$ and hence $\vv_{\star,h}^{n-1}$ are well defined.
\begin{lemma}\label{lem: well-posedness mhat star h}
	There exists $h_{*}>0$ such that, for all $h\le h_{*},$
	\begin{equation*}\label{eq: bounds mhat star h}
		\frac{1}{4} \le |\mhat_{\star,h}^n(\xx)| \le 4 \quad \text{for all}\quad n=2,\ldots,N \,\text{ and  all }\, \xx\in \Omega.
	\end{equation*}
\end{lemma}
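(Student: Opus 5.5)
The plan is to compare $\mhat_{\star,h}^n$ pointwise with its continuous counterpart $\mhat_\star^n = 2\mm_\star^{n-1}-\mm_\star^{n-2}$. We already know that $1\le|\mhat_\star^n(\xx)|\le 3$ for all $\xx\in\Omega$ and all $n=2,\dots,N$. By linearity of the Ritz projection,
\[
\mhat_{\star,h}^n-\mhat_\star^n = 2\big(\bR_h\mm_\star^{n-1}-\mm_\star^{n-1}\big)-\big(\bR_h\mm_\star^{n-2}-\mm_\star^{n-2}\big).
\]
The triangle inequality then gives
\[
\big||\mhat_{\star,h}^n(\xx)|-|\mhat_\star^n(\xx)|\big|\le \norm{\mhat_{\star,h}^n-\mhat_\star^n}_{\bL^\infty(\Omega)}\le 3\max_{0\le k\le N}\norm{\mm_\star^k-\bR_h\mm_\star^k}_{\bL^\infty(\Omega)} .
\]

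Next I will apply the $L^\infty$ Ritz estimate \eqref{eq: Ritz_error_2} componentwise to each $\mm_\star^k=\mm(t_k)$. Since \eqref{eq: regularity} gives $\mm\in C([0,T],\bW^{2,\infty}(\Omega))$, we have
\[
\norm{\mm_\star^k-\bR_h\mm_\star^k}_{\bL^\infty(\Omega)}\le \sqrt3\, C_{\rm R}\, h \sup_{t\in[0,T]}\norm{\mm(t)}_{\bW^{2,\infty}(\Omega)} =: C_* h ,
\]
uniformly in $k$, $N$ and $\tau$. The supremum is finite by continuity in time on the compact interval. The factor $\sqrt3$ accounts for passing from the componentwise bound to the Euclidean norm.

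Combining the two bounds gives $1-3C_*h\le|\mhat_{\star,h}^n(\xx)|\le 3+3C_*h$ for all $\xx\in\Omega$. Choose $h_*\coloneqq 1/(4C_*)$, or any positive $h_*$ if $C_*=0$. Then $3C_*h\le 3/4$ for all $h\le h_*$, which yields $\tfrac14\le|\mhat_{\star,h}^n(\xx)|\le 4$. The resulting $h_*$ depends only on $\gamma$, $\Omega$ and the regularity of $\mm$, and in particular not on $\tau$ or $n$.

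The only point requiring care is the applicability of \eqref{eq: Ritz_error_2}, the $L^\infty$ Ritz error estimate on the Neumann problem. The paper takes it as given from the cited literature, and the regularity assumption \eqref{eq: regularity} provides exactly the $W^{2,\infty}$ control it needs. I therefore expect no genuine obstacle. One should only check that the bound holds at every $\xx\in\Omega$ and not just almost everywhere. This follows because $\bR_h\mm_\star^k$ is continuous and $\mm_\star^k\in\bW^{2,\infty}(\Omega)$ has a continuous representative.
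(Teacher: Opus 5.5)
Your proof is correct and is essentially the paper's argument: the $L^\infty$ Ritz estimate \eqref{eq: Ritz_error_2} combined with the triangle inequality, picking up the factor $3=2+1$ from the extrapolation. The only cosmetic difference is that you perturb around $\mhat_\star^n$ using $1\le|\mhat_\star^n|\le 3$, whereas the paper first shows that $|\mm_{\star,h}^k|$ deviates from $1$ by at most $C_{\rm R}h\,\|\mm\|_{L^\infty(\mathbf{W}^{2,\infty})}$ and then applies the reverse triangle inequality to $2\mm_{\star,h}^{n-1}-\mm_{\star,h}^{n-2}$; both routes reach the same bounds $1-3Ch\le|\mhat_{\star,h}^n|\le 3+3Ch$.
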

\begin{proof} Firstly, observe that for all $n=0,\ldots,N$, the Ritz projection $\mm_{\star,h}^n$ satisfies
$$
\norm{|\mm_{\star,h}^n| -1}_{\bL^\infty(\Omega)} \le \norm{\mm_{\star,h}^n - \mm_\star^n}_{\bL^\infty(\Omega)} \stackrel{\eqref{eq: Ritz_error_2}}{\le} C_{\rm R} h \norm{\mm_\star^n}_{\bW^{2,\infty}(\Omega)}\le C_{\rm R} h \norm{\mm}_{L^\infty(\bW^{2,\infty})}
$$
and thus
$$
1 - C_{\rm R} h \norm{\mm}_{L^\infty(\bW^{2,\infty})} \le |\mm_{\star,h}^n(\xx)| \le 1 + C_{\rm R} h \norm{\mm}_{L^\infty(\bW^{2,\infty})}\quad \text{for all $\xx\in \Omega$.}
$$
This allows to conclude that 
\begin{equation*}\label{eq: upperbound mhat}
|\mhat_{\star,h}^n(\xx)| \le 2 |\mm_{\star,h}^{n-1}(\xx)| + |\mm_{\star,h}^{n-2}(\xx)| \le 3 + 3\, C_{\rm R} h \norm{\mm}_{L^\infty(\bW^{2,\infty})}
\end{equation*}
and that
\begin{align*}
|\mhat_{\star,h}^n(\xx)| &\ge 2 |\mm_{\star,h}^{n-1}(\xx)| - |\mm_{\star,h}^{n-2}(\xx)| \ge  1 - 3\, C_{\rm R} h \norm{\mm}_{L^\infty(\bW^{2,\infty})}.
\end{align*}
Hence, there exist $h_{*}>0$ such that 
$C_{\rm R} h \norm{\mm}_{L^\infty(\bW^{2,\infty})} \le 1/4$ whenever $h\le h_{*}$. Therefore, 
\begin{equation*}\label{eq: small h}
1/4 \le |\mhat_{\star,h}^n(\xx)| \le 4  \quad \text{for all}\quad 0<h\le h_{*} \quad \text{and all}\quad\xx\in \Omega,
\end{equation*}
which guarantees the well-posedness of $\bP(\mhat_{\star,h}^n)$ and $\bP_h(\mhat_{\star,h}^n)$.
\end{proof}
The following lemma provides bounds that will be used in the subsequent proofs.
\begin{lemma}\label{lem: various v estimates}
	Suppose that the solution $\mm$ of LLG \eqref{eq: LLG system} satisfies the regularity assumptions~\eqref{eq: regularity}.
	Recall $h_*$ from Lemma~\ref{lem: well-posedness mhat star h} and suppose that $h\le h_*$. Let $M>0$ such that 
	\begin{equation}\label{eq: bounds lemma v}
	\max\big\{\norm{\mm}_{L^\infty(\bW^{1,\infty})},\norm{\partial_{t}\mm}_{L^\infty(\bW^{1,\infty})}, \norm{\mm}_{L^\infty(\bW^{2,\infty})}, \norm{\partial_t\mm}_{L^\infty(\bH^2)}\big\}\le M.
	\end{equation}
	Let $X\in\{\bH^1(\Omega),\bW^{1,\infty}(\Omega)\}$ and $Y\in\{\bH^2(\Omega),\bW^{1,\infty}(\Omega)\}$. 
	Then, there holds
	\begin{equation}\label{eq: bounds C_X C_Y}
		\norm{\vstarh^{n-1}}_X\le \newconst{C_X} \quad \text{and} \quad \norm{\vv_\star^{n-1}}_Y\le \newconst{C_Y}\quad \text{for all}\quad n=2,\ldots,N,
	\end{equation}
	where the constant $\const{C_X}>0$ depends only on $\gamma$, $|\Omega|$, $M$, and $h\le h_*$, whereas $\const{C_Y}>0$ depends only on $M$.
\end{lemma}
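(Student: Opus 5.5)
The key observation is that the BDF2 difference quotient of the exact solution is a uniformly bounded quantity, together with its Ritz projection. I would therefore write
\[
\dd^{n} \coloneqq \frac{1}{\tau}\Big(\frac{3}{2}\mm_\star^{n} - 2\mm_\star^{n-1} + \frac{1}{2}\mm_\star^{n-2}\Big).
\]
Taylor expansion with integral remainder gives $\dd^n = \frac{3}{2}\d_t\mm_\star^n - \frac12 \d_t\mm_\star^{n-1}$. Each $\d_t\mm_\star^k = \tau^{-1}\int_{t_{k-1}}^{t_k}\partial_t\mm\,\d t$ is an average of $\partial_t\mm$. Hence for any Banach space $Z$ in which $\partial_t\mm$ is uniformly bounded on $[0,T]$, we get $\norm{\dd^n}_Z \le 2\norm{\partial_t\mm}_{L^\infty(Z)}$. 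Taking $Z\in\{\bH^2(\Omega),\bW^{1,\infty}(\Omega)\}$ and using \eqref{eq: bounds lemma v} yields $\norm{\dd^n}_Z\le 2M$. By linearity, the Ritz analogue is $\dd_h^n \coloneqq \bR_h\dd^n$, which is the difference quotient appearing in \eqref{eq: v_star_ritz}.

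For the exact-solution bound I would use $\vv_\star^{n-1}=\bP(\mhat_\star^n)\dd^n$, where $1\le|\mhat_\star^n|\le 3$ and $\norm{\mhat_\star^n}_{\bW^{1,\infty}}\le 3M$. For $Y=\bW^{1,\infty}(\Omega)$, the stability estimate \eqref{eq: stability projection} with $k=1$, $p=\infty$ gives $\norm{\vv_\star^{n-1}}_{\bW^{1,\infty}}\lesssim M\cdot 9M^2$. For $Y=\bH^2(\Omega)$, Lemma~\ref{lem: projection} does not cover second derivatives, so I would compute directly. Write $\bP(\mhat_\star^n)\dd^n = \dd^n - (\bN(\mhat_\star^n)\cdot\dd^n)\bN(\mhat_\star^n)$ and differentiate twice with the product rule, exactly as in the proof of Lemma~\ref{lem: approximation discrete projection}. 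The pointwise bounds $|\bN|=1$, $\norm{\nabla\bN(\mhat_\star^n)}_{\bL^\infty}\lesssim 3M$ and $\norm{\mathrm{D}^2\bN(\mhat_\star^n)}_{\bL^\infty}\lesssim 9M^2+3M$ come from \eqref{eq: normalization bounds i}--\eqref{eq: normalization bounds ii}, with $c=1/3$ and the exact Hessian. Every term in the expansion is then either $L^\infty$ times $\mathrm{D}^2\dd^n\in L^2$, $L^\infty$ times $\nabla\dd^n\in L^2$, or $L^\infty$ times $\dd^n\in L^2$. This gives $\norm{\vv_\star^{n-1}}_{\bH^2}\lesssim M(1+M)^2$, with constants depending only on $M$.

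For the discrete bound, $\vstarh^{n-1}=\bPh(\mstarhhat^n)\bR_h\dd^n$. Lemma~\ref{lem: well-posedness mhat star h} gives $1/4\le|\mstarhhat^n|\le 4$ for $h\le h_*$. By \eqref{eq: Ritz_stability}, $\norm{\mstarhhat^n}_{\bW^{1,\infty}}\le 3C_{\rm R}M$. I would then apply Lemma~\ref{lem: stability discrete projection}, estimate \eqref{eq: stability discrete projection}, with $k=1$ and $p\in\{2,\infty\}$. It remains to bound $\norm{\bR_h\dd^n}_X$. For $X=\bW^{1,\infty}$, \eqref{eq: Ritz_stability} gives $\le C_{\rm R}\,2M$. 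For $X=\bH^1$, \eqref{eq: Ritz_error_1} gives $\norm{\bR_h\dd^n}_{\bH^1}\le \norm{\dd^n}_{\bH^1}+C_{\rm R}h\norm{\dd^n}_{\bH^2}\le 2M(1+C_{\rm R}h_*)$. This yields $\const{C_X}$ depending on $\gamma$, $|\Omega|$ (through $C_{\rm R}$), $M$, and $h_*$.

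The main obstacle is the $\bH^2$ bound for $\vv_\star^{n-1}$, since it is not covered by the earlier projection lemmas. The danger is that a Hessian falls on the normalization. This is harmless because $\mhat_\star^n$ is a combination of exact solutions bounded in $\bW^{2,\infty}$, so \eqref{eq: normalization bounds ii} applies with the true Hessian. A secondary point is checking that the hypotheses of Lemma~\ref{lem: normalization bounds} (lower bound $c$) hold uniformly in $n$, which follows from Lemma~\ref{lem: well-posedness mhat star h} and the bound $|\mhat_\star^n|\ge1$.
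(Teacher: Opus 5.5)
Your proposal is correct and follows the paper's proof essentially step by step. It writes the BDF2 quotient as $\frac32\d_t\mm_\star^n-\frac12\d_t\mm_\star^{n-1}$ (a purely algebraic identity, no Taylor expansion needed), bounds the $\bW^{1,\infty}$-norms via \eqref{eq: stability projection}, \eqref{eq: stability discrete projection} and \eqref{eq: Ritz_stability}, and gets the $\bH^2$-bound for $\vv_\star^{n-1}$ from $\bP(\mhat_\star^n)=\mathbf{I}-\bN(\mhat_\star^n)\bN(\mhat_\star^n)^T$ together with \eqref{eq: normalization bounds ii}. The only inessential deviation is that for $X=\bH^1(\Omega)$ you use $p=2$ and \eqref{eq: Ritz_error_1}, whereas the paper simply bounds the $\bH^1$-norm by the $\bW^{1,\infty}$-norm at the cost of $|\Omega|^{1/2}$; note also that your symbols $\dd^n,\dd_h^n$ clash with the paper's defect $\dd_h^n$.
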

\begin{proof}
	Firstly, with $I_n=[t_{n-1},t_n]$, notice that 
	\begin{align*}
	\norm{\d _t\mm_\star^n}_{\bW^{1,\infty}(\Omega)}=\frac{1}{\tau}\norm[\Big]{\int_{t_{n-1}}^{t_n}\partial_t\mm(s)\d s}_{\bW^{1,\infty}(\Omega)}\le \norm{\partial_t\mm}_{L^\infty(I_n,\bW^{1,\infty}(\Omega))}.
	\end{align*}
	Recall that $h\le h_*$ allows to apply Lemma~\ref{lem: stability discrete projection} and recall definitions \eqref{eq: v_star} and \eqref{eq: v_star_ritz}. Together with $\bW^{1,\infty}$-stability of the Ritz projection~\eqref{eq: Ritz_stability}, the properties of the nodal interpolation $\III_h$, and \eqref{eq: bounds lemma v}, Lemma~\ref{lem: stability discrete projection} proves for $X\in\{\bH^1(\Omega),\bW^{1,\infty}(\Omega)\}$ that
\begin{align}
	\mkern-22mu\norm{\vstarh^{n-1}}_{X}&\lesssim\norm{\vstarh^{n-1}}_{\bW^{1,\infty}(\Omega)}\stackrel{\eqref{eq: stability discrete projection}}{\lesssim} \norm[\Big]{\frac{1}{\tau}\Big(\frac{3}{2}\mm_{\star,h}^{n} - 2\mm_{\star,h}^{n-1} + \frac{1}{2}\mm_{\star,h}^{n-2}\Big)}_{\bW^{1,\infty}(\Omega)}\mkern-4mu\norm{\mstarhhat^n}_{\bW^{1,\infty}(\Omega)}^2\nonumber\\
	&\mkern-60mu\stackrel{\eqref{eq: Ritz_stability}}{\lesssim} \norm[\Big]{\frac{1}{\tau}\Big(\frac{3}{2}\mm_{\star}^{n} - 2\mm_{\star}^{n-1} + \frac{1}{2}\mm_{\star}^{n-2}\Big)}_{\bW^{1,\infty}(\Omega)}\norm{\mhat_\star^n}_{\bW^{1,\infty}(\Omega)}^2 \label{eq: bound v star h}\\
	&\mkern-55mu\le \norm{\mhat_\star^n}_{\bW^{1,\infty}(\Omega)}^2\Big(\frac{3}{2}\norm{\d_t\mm_\star^n}_{\bW^{1,\infty}(\Omega)})+\frac{1}{2}\norm{\d_t\mm_\star^{n-1}}_{\bW^{1,\infty}(\Omega)}\Big)\stackrel{\eqref{eq: bounds lemma v}}{\lesssim} \norm{\partial_t\mm}_{L^\infty(\bW^{1,\infty})}\stackrel{\eqref{eq: bounds lemma v}}{\lesssim} 1.\nonumber
\end{align}
	Overall, the hidden constants depend only on $\gamma$, $|\Omega|$, $M$, and on $h \le h_*$. To bound $\norm{\vv_\star^{n-1}}_Y$, notice that the case $Y\equiv\bW^{1,\infty}(\Omega)$ can be handled similarly as 
	\begin{equation}
		\begin{split}
	\norm{\vv_\star^{n-1}}_{\bW^{1,\infty}(\Omega)}&\stackrel{\eqref{eq: stability projection}}{\le} \norm[\Big]{\frac{1}{\tau}\big(\frac{3}{2}\mm_\star^n-2\mm_\star^{n-1}+\frac{1}{2}\mm_\star^{n-2}\big)}_{\bW^{1,\infty}(\Omega)}\norm{\mhat_\star^{n}}_{\bW^{1,\infty}(\Omega)}^2\stackrel{\eqref{eq: bound v star h}}{\lesssim} 1.
		\end{split}
	\end{equation}
	If $Y\equiv\bH^2(\Omega)$, then Lemma~\ref{lem: stability discrete projection} does not apply. However, it holds that
	\begin{equation}\label{eq: bound v star H2}
		\begin{aligned}
			\mkern-20mu\norm{\vv_\star^{n-1}}_{\bH^2(\Omega)}&\mkern-7mu\stackrel{\eqref{eq: v_star}}{=} \norm[\Big]{\bP(\mhat_\star^n)\frac{1}{\tau}\big(\frac{3}{2}\mm_\star^n-2\mm_\star^{n-1}+\frac{1}{2}\mm_\star^{n-2}\big)}_{\bH^2(\Omega)}\\
					&= \norm[\Big]{\Big(\mathbf{I} - \bN(\mhat_\star^n)\bN(\mhat_\star^n)^T\Big)\frac{1}{\tau}\big(\frac{3}{2}\mm_\star^n-2\mm_\star^{n-1}+\frac{1}{2}\mm_\star^{n-2}\big)}_{\bH^2(\Omega)}\\
					&\lesssim \norm[\big]{\mathbf{I} - \bN(\mhat_\star^n)\bN(\mhat_\star^n)^T}_{\bW^{2,\infty}(\Omega)}\norm[\Big]{\frac{3}{2}\d_t\mm_\star^n-\frac{1}{2}\d_t\mm_\star^{n-1}}_{\bH^2(\Omega)}\\
					&\lesssim\big(1+\norm{\bN(\mhat_\star^n)}^2_{\bW^{2,\infty}(\Omega)}\big) \big(\norm{\d_t\mm_\star^n}_{\bH^2(\Omega)}+\norm{\d_t\mm_\star^{n-1}}_{\bH^2(\Omega)}\big)\\
					&\mkern-19mu\stackrel{\eqref{eq: regularity},\eqref{eq: normalization bounds}}{\lesssim} \norm{\mhat_\star^n}^2_{\bW^{2,\infty}(\Omega)}(\norm{\d_t\mm_\star^n}_{\bH^2(\Omega)}+\norm{\d_t\mm_\star^{n-1}}_{\bH^2(\Omega)})\stackrel{\eqref{eq: regularity}}{\le}\norm{\partial_t\mm}_{L^\infty(\bH^2)}\mkern-5mu\stackrel{\eqref{eq: bounds lemma v}}{\lesssim}\mkern-5mu 1
		\end{aligned}
	\end{equation}
	where the hidden constants depend only on $M$. This concludes the proof.
\end{proof}

In order to derive consistency and stability estimates for the full discretization \eqref{eq: full discr alg B}, it is convenient to equivalently rewrite \eqref{eq: full discr alg B} for all $n = 2,\dots,N$ and all $\pphi_h\in\bTh(\mhat_h^n)$ as
\begin{equation}\label{eq: discrete formulation}
	\alpha \scalarproductO{\vv_h^{n-1}}{\pphi_h} + \scalarproductO{\mhat_h^n\times\vv_h^{n-1}}{\pphi_h} + \lamex^2 \scalarproductO{\nabla \mm_h^n}{\nabla \pphi_h} = \scalarproductO{\ff_h^n}{\pphi_h}
\end{equation}
with 
\begin{equation}\label{eq: discrete derivative v_h}
	\mm_h^n = \frac{4}{3}\mm_h^{n-1} - \frac{1}{3}\mm_h^{n-2} + \frac{2}{3}\tau \vv_h^{n-1}\quad \text{and}\quad
	\vv_h^{n-1} = \frac{1}{\tau}\Big(\frac{3}{2}\mm_h^n - 2 \mm_h^{n-1} + \frac{1}{2}\mm_h^{n-2}\Big).
\end{equation}

Inserting \eqref{eq: Ritz projections} in the linearly implicit BDF2 method \eqref{eq: discrete formulation}, there exists a unique defect $\boldsymbol{d}_h^n\in \bTh(\mstarhhat^n)$ such that, for all $\pphi_h\in \bTh(\mstarhhat^n)$ and all $n=2,\dots,N$,
\begin{equation}\label{eq: defect}
	\scalarproductO{\boldsymbol{d}_h^n}{\pphi_h} \coloneqq\alpha \scalarproductO{\vv_{\star,h}^{n-1}}{\pphi_h} + \scalarproductO{\mstarhhat^n\times\vstarh^{n-1}}{\pphi_h} + \lamex^2\scalarproductO{\nabla\mstarh^{n}}{\nabla\pphi_h} - \scalarproductO{\ff_h^n}{\pphi_h}.
\end{equation}
In addition, there exists a unique residual $\rr_h^n\in \bT_h(\mhat_h^n)$ such that, for all $\pphi_h \in \bTh(\widehat{\mm}_h^n)$ and all $n=2,\ldots,N$,
\begin{equation}\label{eq: residual equation}
	\scalarproductO{\boldsymbol{r}_h^n}{\pphi_h} \coloneqq\alpha \scalarproductO{\vv_{\star,h}^{n-1}}{\pphi_h} + \scalarproductO{\mstarhhat^n\times\vstarh^{n-1}}{\pphi_h} + \lamex^2\scalarproductO{\nabla\mstarh^{n}}{\nabla\pphi_h} - \scalarproductO{\ff_h^n}{\pphi_h}.
\end{equation}
Note the important difference between $\dd_h^n$ and $\rr_h^n$: \eqref{eq: defect} relies on $\bT_h(\mstarhhat^n)$, while \eqref{eq: residual equation} relies on $\bT_h(\mhat_h^n)$, i.e., the ansatz and the test spaces in \eqref{eq: defect}--\eqref{eq: residual equation} differ. 

The residual $\rr_h^n$ quantifies how much the Ritz projections of the exact solution fail to satisfy the numerical scheme, with respect to the orthogonality constraint imposed by the numerical solution. In this respect, it will also be convenient to define 
\begin{equation}\label{eq: D^n}
\bD_\star^n \coloneqq\alpha \partial_t \mm_\star^n + \mm_\star^n \times \partial_t \mm_\star^n - \lamex^2 \Delta \mm_\star^n - \ff_\star^n \quad \text{with} \quad \ff_\star^n \coloneqq \ff(t_n).
\end{equation} 
and its discrete counterpart
\begin{equation}\label{eq: D_h^n}
\mkern-160mu\boldsymbol{D}_h^n\coloneqq\alpha\vstarh^{n-1} + \mstarhhat^n\times \vstarh^{n-1}-\lamex^2\Delta\mm_\star^n-\ff_h^n.
\end{equation} 
With the notation \eqref{eq: exact solution quantities}--\eqref{eq: Ritz projections}, the relation~\eqref{eq: Ritz projection 2} becomes
$$
\scalarproductO{\nabla\mstarh^n}{\nabla\pphi_h} \stackrel{\eqref{eq: Ritz projection 2}}{=} \scalarproductO{\nabla\mm(t_n)}{\nabla\pphi_h} \stackrel{\eqref{eq: Ritz projection 3}}{=} -\scalarproductO{\Delta\mm(t_n)}{\pphi_h}.
$$
This allows to rewrite \eqref{eq: defect} and \eqref{eq: residual equation} as
\begin{subequations}\label{eq: defect and residual}
	\begin{align}
		\scalarproductO{\boldsymbol{d}_h^n}{\pphi_h} &= \scalarproductO{\boldsymbol{D}_h^n}{\pphi_h}\quad \text{for all}\quad \pphi_h\in \bTh(\mstarhhat^n), \label{eq: defect 2}\\
		\intertext{and}
		\scalarproductO{\rr_h^n}{\pphi_h} &= \scalarproductO{\boldsymbol{D}_h^n}{\pphi_h} \quad \text{for all}\quad \pphi_h\in \bTh(\mhat_h^n).\label{eq: residual 2}
	\end{align}
\end{subequations}
In order to analyze the error equation (see \eqref{eq: error equation} below), it is convenient to introduce also the following notation for the errors:
\begin{subequations}\label{eq: errors}
\begin{align}
	\boldsymbol{e}_h^n &\coloneqq\mm_h^n - \mstarh^n,  \label{eq: e_h^n}\\
	\widehat{\ee}_h^n &\coloneqq\mhat_h^n - \mstarhhat^n,  \label{eq: hat_e_h^n}\\
	\oomega_h^{n-1} &\coloneqq\vv_h^{n-1} - \vv_{\star,h}^{n-1}\stackrel{\eqref{eq: discrete derivative v_h}}{=}\frac{1}{\tau}\Big(\frac{3}{2}\ee_h^n-2\ee_h^{n-1}+\frac{1}{2}\ee_h^{n-2}\Big)+\sss_h^n,  \label{eq: omega_h^n-1}
\intertext{where} 
	\sss_h^n &\coloneqq(\mathbf{I}-\bPh(\mstarhhat^n))\frac{1}{\tau}\Big(\frac{3}{2}\mm_{\star,h}^{n} - 2\mm_{\star,h}^{n-1} + \frac{1}{2}\mm_{\star,h}^{n-2}\Big).\label{eq: def s_h^n}
\end{align}
\end{subequations}
\section{Consistency estimates}\label{sec: consistency}
\subsection{Consistency estimates of the semi-discretization in time.}
In this section, we derive consistency error estimates for Algorithm~\ref{alg: full discr}. 
The first preliminary result adapts \cite[Lemma 6.1]{feischl} to the present BDF2 algorithm. The proof is postponed to Appendix~\ref{section: appendix}.
\begin{lemma}\label{lem: semi-discrete approximation}
	Suppose that the solution $\mm$ of LLG \eqref{eq: LLG system} satisfies the regularity assumptions~\eqref{eq: regularity}. Recall $h_*$ from Lemma~\ref{lem: well-posedness mhat star h} and suppose that $h\le h_*$. Let $M>0$ such that 
	$$
		\max\big\{\norm{\partial_t\mm}_{L^\infty(\bL^\infty)},\norm{\partial^2_{t}\mm}_{L^\infty(\bL^2)}, \norm{\partial_t^{3}\mm}_{L^\infty(\bL^2)}\big\}\le M.
	$$
	Then, there exists $\newconst{semi-consistency_1}>0$ depending only on $M$ such that, for all $n=2,\ldots,N$,
	\begin{align}
		\normO{\widehat{\mm}_\star^n-\mm_\star^n} &\le \const{semi-consistency_1}\tau^2,\label{eq: consistency semi predictor}\\
		\normO{\vv_\star^{n-1}-\partial_t\mm_\star^n} &\le \const{semi-consistency_1}\tau^2.\label{eq: consistency semi derivative}
	\end{align}
\end{lemma}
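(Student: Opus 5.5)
Both estimates follow from Taylor expansion in time with integral remainder, carried out pointwise in $\Omega$ and then measured in $\bL^2(\Omega)$. The first estimate needs only the second time derivative. For the second, the projection $\bP(\mhat_\star^n)$ has to be handled separately, using the orthogonality $\mm_\star^n\cdot\partial_t\mm_\star^n=0$.

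\textbf{Predictor estimate.} I write
\[
\mhat_\star^n-\mm_\star^n=-(\mm_\star^n-2\mm_\star^{n-1}+\mm_\star^{n-2})=-\tau^2\d_t^2\mm_\star^n .
\]
By the Peano kernel representation, this second difference equals $\int_{t_{n-2}}^{t_n}K(s)\,\partial_t^2\mm(s)\,\d s$, where the hat kernel satisfies $0\le K\le\tau$. Minkowski's inequality then gives
\[
\normO{\mhat_\star^n-\mm_\star^n}\le\tau^2\norm{\partial_t^2\mm}_{L^\infty(\bL^2)}\le M\tau^2 ,
\]
which is \eqref{eq: consistency semi predictor}.

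\textbf{Derivative estimate.} Set $\bD\coloneqq\frac1\tau\big(\frac32\mm_\star^n-2\mm_\star^{n-1}+\frac12\mm_\star^{n-2}\big)$. BDF2 is exact for quadratics, so expanding around $t_n$ with third-order integral remainder gives
\[
\bD-\partial_t\mm_\star^n=\frac1\tau\int_{t_{n-2}}^{t_n}\widetilde K(s)\,\partial_t^3\mm(s)\,\d s,\qquad |\widetilde K|\lesssim\tau^2 .
\]
Hence $\normO{\bD-\partial_t\mm_\star^n}\lesssim\tau^2\norm{\partial_t^3\mm}_{L^\infty(\bL^2)}$. I then split
\[
\vv_\star^{n-1}-\partial_t\mm_\star^n=\bP(\mhat_\star^n)\big(\bD-\partial_t\mm_\star^n\big)+\big(\bP(\mhat_\star^n)-\mathbf I\big)\partial_t\mm_\star^n .
\]
The first term is bounded by the $\bL^2$ estimate just obtained, because $\bP(\mhat_\star^n)$ is a pointwise orthogonal projection with norm at most $1$. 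This is well-defined since $|\mhat_\star^n|\ge1$.

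\textbf{Main obstacle: the projection term.} The second term needs more care, since the naive Lipschitz bound $|\bP(\mhat)-\bP(\mm)|\lesssim|\mhat-\mm|$ uses that $\mm_\star^n\cdot\partial_t\mm_\star^n=0$, i.e.\ $\bP(\mm_\star^n)\partial_t\mm_\star^n=\partial_t\mm_\star^n$. Pointwise, I compute
\[
(\mathbf I-\bP(\mhat_\star^n))\partial_t\mm_\star^n=\frac{(\mhat_\star^n\cdot\partial_t\mm_\star^n)\,\mhat_\star^n}{|\mhat_\star^n|^2}=\frac{\big((\mhat_\star^n-\mm_\star^n)\cdot\partial_t\mm_\star^n\big)\,\mhat_\star^n}{|\mhat_\star^n|^2}.
\]
Using $|\mhat_\star^n|\ge1$, its modulus is at most $|\mhat_\star^n-\mm_\star^n|\,|\partial_t\mm_\star^n|$. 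Taking the $\bL^2$ norm, bounding $\partial_t\mm$ in $\bL^\infty$ and applying \eqref{eq: consistency semi predictor} yields a bound $M\cdot M\tau^2$. Combining the two terms proves \eqref{eq: consistency semi derivative} with a constant depending only on $M$.
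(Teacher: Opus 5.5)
Your proof is correct and follows essentially the same route as the paper: Taylor expansion with integral remainder for the second difference and for the BDF2 quotient, followed by the same splitting of $\vv_\star^{n-1}-\partial_t\mm_\star^n$. Your term $(\bP(\mhat_\star^n)-\mathbf I)\partial_t\mm_\star^n$ is exactly the paper's $(\bP(\mhat_\star^n)-\bP(\mm_\star^n))\partial_t\mm_\star^n$, since $\bP(\mm_\star^n)\partial_t\mm_\star^n=\partial_t\mm_\star^n$. The only difference is how you bound the projection terms. You use a direct pointwise computation (contractivity of the orthogonal projection, and $\mm_\star^n\cdot\partial_t\mm_\star^n=0$ with $|\mhat_\star^n|\ge1$), whereas the paper invokes the stability and Lipschitz bounds of Lemma~\ref{lem: projection}. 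Your version is self-contained and gives explicit constants such as $M\tau^2$ and $M^2\tau^2$.
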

\subsection{Residual estimate.}
The following lemma provides a bound on the residual $\boldsymbol{r}_h^n$ defined in \eqref{eq: residual equation}. Note that its statement makes the additional assumption that $1/2 \le |\widehat{\mm}_h^n| \le 2$. Indeed, this assumption will be proved in Proposition~\ref{prop: stability full discretization} below, provided that $h$ and $\tau$ are sufficiently small.
\begin{lemma}\label{lem: bounds r}
Let $M>0$. Suppose that the solution of the LLG equation \eqref{eq: LLG system} satisfies the regularity assumption \eqref{eq: regularity} and that $1/2\le |\mhat_h^n|\le 2$ a.e. in $\Omega$. Suppose that $\ff$ satisfies \eqref{eq: regularity f} and that $\max\limits_{n=2,\dots,N}\norm{\ff_\star^n-\ff_h^n}_{\bL^\infty(\Omega)}\le C_{\ff} h$, as well as
	\begin{equation}\label{eq: hp regularity bounds residual}
		\norm{\mm}_{L^\infty(\bW^{1,\infty})}+\norm{\partial_t\mm}_{L^\infty(\bW^{1,\infty})}+\norm{\lamex^2\Delta\mm+\ff}_{L^\infty(\bL^\infty)}\le M.
	\end{equation}
Recall $h_*>0$ from Lemma~\ref{lem: well-posedness mhat star h} and suppose $h\le h_*$.
Then, it holds that
\begin{equation}\label{eq: bound r_h^n}
	\normO{\boldsymbol{r}_h^n}\le \newconst{residual}\big(\normO{\dd_h^n}+\normO{\widehat{\ee}_h^n}\big)\quad\text{ for every }\quad n=2,\dots,N,
\end{equation}
where the constant $\const{residual}>0$ depends only on $\gamma$, $M$, $C_{\ff}$, and on $h\le h_*$.
\end{lemma}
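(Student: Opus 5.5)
Since $\rr_h^n\in\bTh(\mhat_h^n)$, we have $\normO{\rr_h^n}^2=\scalarproductO{\rr_h^n}{\rr_h^n}=\scalarproductO{\bD_h^n}{\rr_h^n}$ by \eqref{eq: residual 2}. The plan is to move the test function from $\bTh(\mhat_h^n)$ into $\bTh(\mstarhhat^n)$, where \eqref{eq: defect 2} converts $\bD_h^n$ into $\dd_h^n$. Set $\pphi_h\coloneqq\rr_h^n$ and split
\begin{equation*}
\scalarproductO{\bD_h^n}{\pphi_h}=\scalarproductO{\bD_h^n}{\bPh(\mstarhhat^n)\pphi_h}+\scalarproductO{\bD_h^n}{(\bPh(\mhat_h^n)-\bPh(\mstarhhat^n))\pphi_h},
\end{equation*}
where we use $\pphi_h=\bPh(\mhat_h^n)\pphi_h$ because $\pphi_h$ is nodally tangent to $\mhat_h^n$. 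The first term equals $\scalarproductO{\dd_h^n}{\bPh(\mstarhhat^n)\pphi_h}$, which is bounded by $\normO{\dd_h^n}\normO{\bPh(\mstarhhat^n)\pphi_h}\lesssim\normO{\dd_h^n}\normO{\pphi_h}$. This uses the $\bL^2$-stability \eqref{eq: stability discrete projection} with $k=0$, $p=2$, which is applicable by Lemma~\ref{lem: well-posedness mhat star h} since $1/4\le|\mstarhhat^n|\le4$.

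For the second term I will use the Lipschitz estimate in $\bL^1$, \eqref{eq: discrete projection lipschitz estimate L1}, together with $\bL^\infty$-boundedness of $\bD_h^n$:
\begin{equation*}
\big|\scalarproductO{\bD_h^n}{(\bPh(\mhat_h^n)-\bPh(\mstarhhat^n))\pphi_h}\big|\le\norm{\bD_h^n}_{\bL^\infty(\Omega)}\,\const{lipsch_Ph}\normO{\pphi_h}\normO{\widehat{\ee}_h^n}.
\end{equation*}
The hypotheses of Lemma~\ref{lem: stability discrete projection} hold with $k=0$: $1/2\le|\mhat_h^n|\le2$ by assumption and $1/4\le|\mstarhhat^n|\le4$, so $c=1/4$ works and the $\bL^\infty$ bound $M$ in that lemma can be taken to be $4$. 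Dividing by $\normO{\rr_h^n}$ then yields \eqref{eq: bound r_h^n}.

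The remaining task, and the main technical point, is a uniform bound on $\norm{\bD_h^n}_{\bL^\infty(\Omega)}$. Write $\bD_h^n=\alpha\vstarh^{n-1}+\mstarhhat^n\times\vstarh^{n-1}-(\lamex^2\Delta\mm_\star^n+\ff_\star^n)+(\ff_\star^n-\ff_h^n)$. The last two terms are bounded by $M+C_\ff h$ using \eqref{eq: hp regularity bounds residual} and the assumption on $\ff_h$. For the first two, $\norm{\mstarhhat^n}_{\bL^\infty}\le4$, and $\norm{\vstarh^{n-1}}_{\bL^\infty}\le\norm{\vstarh^{n-1}}_{\bW^{1,\infty}}$ is bounded by \eqref{eq: bound v star h} in Lemma~\ref{lem: various v estimates}. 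That estimate needs only $\norm{\mm}_{L^\infty(\bW^{1,\infty})}$ and $\norm{\partial_t\mm}_{L^\infty(\bW^{1,\infty})}$, which are controlled by \eqref{eq: hp regularity bounds residual}, plus the Ritz $\bW^{1,\infty}$-stability \eqref{eq: Ritz_stability}.

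One caution is that Lemma~\ref{lem: stability discrete projection} bounds $\bPh(\mstarhhat^n)$ in $\bW^{1,\infty}$ with the factor $\norm{\mstarhhat^n}_{\bW^{1,\infty}}^2$, and this requires Lemma~\ref{lem: well-posedness mhat star h}, hence $h\le h_*$. The constant therefore depends on $\gamma$, $M$, $C_\ff$, and $h_*$, as claimed.
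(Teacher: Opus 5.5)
Your proposal is correct and follows the paper's proof essentially step by step: you test with $\rr_h^n$, use $\bPh(\mhat_h^n)\rr_h^n=\rr_h^n$, and split off $\bPh(\mstarhhat^n)\rr_h^n\in\bTh(\mstarhhat^n)$ so that \eqref{eq: defect 2} turns $\bD_h^n$ into $\dd_h^n$. The remainder is then controlled by the $\bL^1$-Lipschitz estimate \eqref{eq: discrete projection lipschitz estimate L1} together with an $\bL^\infty$-bound on $\bD_h^n$ from Lemma~\ref{lem: various v estimates}. The only cosmetic difference is that you bound $\norm{\mstarhhat^n}_{\bL^\infty(\Omega)}\le 4$ directly via Lemma~\ref{lem: well-posedness mhat star h}, whereas the paper uses the Ritz $\bW^{1,\infty}$-stability \eqref{eq: Ritz_stability}.
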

\begin{proof}
	For every $\pphi_h\in \bTh(\mhat_h^n)$, it holds $\bPh(\mstarhhat^n)\pphi_h\in\bTh(\mstarhhat^n)$ and $\bPh(\mhat_h^n)\pphi_h=\pphi_h$. Therefore, we can rewrite \eqref{eq: residual equation} as
	\begin{align*}
		\scalarproductO{\boldsymbol{r}_h^n}{\pphi_h} &\mkern-3mu\stackrel{\eqref{eq: residual 2}}{=} \scalarproductO{\bD_h^n}{\pphi_h}= \scalarproductO{\bD_h^n}{\bPh(\mhat_h^n)\pphi_h}\\
		&\mkern+5mu=\scalarproductO{\bD_h^n}{\bPh(\mstarhhat^n)\pphi_h}+\scalarproductO{\bD_h^n}{\big(\bPh(\mhat_h^n)-\bPh(\mstarhhat^n)\big)\pphi_h}\\
		&\mkern-3mu\stackrel{\eqref{eq: defect 2}}{=} \scalarproductO{\dd_h^n}{\bPh(\mstarhhat^n)\pphi_h}+\scalarproductO{\bD_h^n}{\big(\bPh(\mhat_h^n)-\bPh(\mstarhhat^n)\big)\pphi_h}.
	\end{align*}
	We choose $\pphi_h=\boldsymbol{r}_h^n$. Thanks to Lemma \ref{lem: stability discrete projection} and the Hölder inequality on the second term of the right-hand side above, we deduce that
	\begin{equation}\label{eq: residual error equation}
		\begin{aligned}
			\normO{\boldsymbol{r}_h^n}^2 &\le \normO{\boldsymbol{d}_h^n}\normO{\bPh(\mstarhhat^n)\rr_h^n} + \norm{\bD_h^n}_{\bL^\infty(\Omega)}\norm[\big]{\big(\bPh(\mhat_h^n)-\bPh(\mstarhhat^n)\big)\rr_h^n}_{\bL^1(\Omega)}\\
			&\mkern-19mu\stackrel{\eqref{eq: stability discrete projection},\eqref{eq: discrete projection lipschitz estimate L1}}{\lesssim} \normO{\boldsymbol{d}_h^n}\norm{\mstarhhat^n}^2_{\bL^{\infty}(\Omega)}\normO{\rr_h^n}+\norm{\bD_h^n}_{\bL^\infty(\Omega)}\normO{\mhat_h^n-\mstarhhat^n}\normO{\rr_h^n}\\
			&\mkern-8mu\stackrel{\eqref{eq: hat_e_h^n}}{=}\big(\normO{\boldsymbol{d}_h^n}\norm{\mstarhhat^n}^2_{\bL^{\infty}(\Omega)}+\norm{\bD_h^n}_{\bL^\infty(\Omega)}\normO{\widehat{\ee}_h^n}\big)\normO{\rr_h^n},
		\end{aligned}
	\end{equation}
	where the hidden constant depends only on $\gamma$, $M$, $|\Omega|$, and on $h\le h_*$. Moreover, by definition~\eqref{eq: D_h^n}, we have that
	$$
		\norm{\bD_h^n}_{\bL^\infty(\Omega)}\le \alpha\norm{\vv_{\star,h}^{n-1}}_{\bL^\infty(\Omega)}+\norm{\mstarhhat^n\times\vv_{\star,h}^{n-1}}_{\bL^\infty(\Omega)}+\norm{\lamex^2\Delta\mm_\star^n+\ff_\star^n}_{\bL^\infty(\Omega)} + \norm{\ff_\star^n-\ff_h^n}_{\bL^\infty(\Omega)}.
	$$
	Lemma~\ref{lem: various v estimates} guarantees that $\norm{\vstarh^{n-1}}_{\bL^\infty(\Omega)}\lesssim 1$ and moreover, note that
	$$
	\norm{\mstarhhat^n}_{\bL^\infty(\Omega)}\stackrel{\eqref{eq: Ritz_stability}}{\lesssim}\norm{\mhat_\star^n}_{\bW^{1,\infty}(\Omega)}\stackrel{\eqref{eq: hp regularity bounds residual}}{\lesssim}1,
	$$ 
	where the hidden constant depends only on $\gamma$, $M$, and $h\le h_*$. Thanks to \eqref{eq: hp regularity bounds residual}, it holds that
	$$
	\norm{\mstarhhat^n\times\vv_{\star,h}^{n-1}}_{\bL^\infty(\Omega)}\le \norm{\mstarhhat^n}_{\bL^\infty(\Omega)}\norm{\vstarh^{n-1}}_{\bL^\infty(\Omega)}\stackrel{\eqref{eq: Ritz_stability}}{\lesssim} \norm{\mhat_\star^n}_{\bW^{1,\infty}(\Omega)}\norm{\vstarh^{n-1}}_{\bL^\infty(\Omega)}\stackrel{\eqref{eq: hp regularity bounds residual},\eqref{eq: bounds C_X C_Y}}{\lesssim} 1,
	$$ where the hidden constant depends only on $\gamma$, $M$, and $h\le h_*$. Together with \eqref{eq: hp regularity bounds residual}, this gives $\norm{\bD_h^n}_{\bL^\infty(\Omega)}\lesssim 1$. Combining all derived bounds with \eqref{eq: residual error equation}, we conclude the proof of \eqref{eq: bound r_h^n}.
\end{proof}
\begin{remark}\label{rem: well-defined}
	Since the quantities $|\mhat_\star^n|$ and $|\mstarhhat^n|$ are uniformly bounded away from zero for all $\xx \in \Omega$, the projections $\bP(\mhat_\star^n)$, $\bP_h(\mhat_\star^n)$, $\bP(\mstarhhat^n)$, and $\bP_h(\mstarhhat^n)$ are all well-defined and Lemmas~\ref{lem: normalization bounds}--\ref{lem: stability I_h} and Lemmas \ref{lem: approximation discrete projection}--\ref{lem: stability discrete projection} apply. 
	On the other hand, Lemma~\ref{lem: well-posedness discrete projection}\ref{point c} guarantees that $|\mhat_h^n(\zz)| \geq 1$ only at the nodes $\zz \in \mathcal{N}_h$ of the triangulation. This is sufficient to ensure that both $\bP_h(\mhat_h^n)$ and $\bP(\mhat_h^n)$ are well-defined: $\bP_h(\mhat_h^n)$ since it depends only on the nodal values, and $\bP(\mhat_h^n)$ since the set where $|\mhat_h^n|$ can be zero has measure zero. However, the estimates from Lemmas~\ref{lem: normalization bounds}--\ref{lem: stability I_h} and Lemmas~\ref{lem: approximation discrete projection}--\ref{lem: stability discrete projection} require that $|\mhat_h^n|$ is uniformly bounded away from zero almost everywhere in $\Omega$, which cannot be guaranteed by Lemma~\ref{lem: well-posedness discrete projection}\ref{point c} alone. To overcome this limitation, we employ an induction argument in the proof of Proposition~\ref{prop: stability full discretization} to establish that also $|\mhat_h^n|$ is indeed uniformly bounded away from zero, provided that $h$ and $\tau$ are sufficiently small.
\end{remark}
\subsection{Error equation.}
To obtain the error equation satisfied by the error $\ee_h^n$ from~\eqref{eq: e_h^n}, we subtract the residual equation \eqref{eq: residual equation} from the fully discrete problem \eqref{eq: discrete formulation}. According to the definitions~\eqref{eq: e_h^n}--\eqref{eq: omega_h^n-1} this reads for every $\pphi_h \in \bTh(\mhat_h^n)$ and $n=2,\ldots,N$ as:
\begin{equation}\label{eq: error equation}
	\alpha \scalarproductO{\oomega_h^{n-1}}{\pphi_h} + \scalarproductO{\widehat{\ee}_h^n\times\vv_{\star,h}^{n-1}}{\pphi_h} + \scalarproductO{\widehat{\mm}_h^n\times \oomega_h^{n-1}}{\pphi_h} + \lamex^2\scalarproductO{\nabla\ee_h^n}{\nabla\pphi_h} = -\scalarproductO{\rr_h^n}{\pphi_h}.
\end{equation}
The following lemma provides a bound with optimal order for $\sss_h^n$ defined in \eqref{eq: def s_h^n}.
\begin{proposition}\label{prop: bound s_h^n}
	Let $\mm$ satisfy the regularity assumptions \eqref{eq: regularity} and suppose that $1/2 \le |\mhat_h^n| \le 2$ a.e. in $\Omega$. Let $M>0$ such that 
	\begin{equation}\label{eq: bounds prop s}
		\max\{\norm{\mm}_{L^\infty(\bW^{1,\infty})},\norm{\partial_t\mm}_{L^\infty(\bH^2)},\norm{\partial_t^3\mm}_{L^\infty(\bH^1)}\} \le M.
	\end{equation}
	Recall $h_*$ from Lemma~\ref{lem: well-posedness mhat star h} and suppose that $h\le h_*$.
	Then, it holds that
	\begin{equation}\label{eq: bound s_h^n}
		\norm{\sss_h^n}_{\bH^1(\Omega)}\le \newconst{s}(h+\tau^2)\quad\text{ for every }\quad n=2,\dots,N,
	\end{equation}
	where the constant $\const{s}>0$ depends only on $\gamma$, $|\Omega|$, $M$, and $h\le h_*$.
\end{proposition}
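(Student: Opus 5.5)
We have $\sss_h^n = (\mathbf{I}-\bPh(\mstarhhat^n))\,\bT_\star$, where we abbreviate $\bT_\star \coloneqq \tau^{-1}\big(\tfrac32\mm_{\star,h}^n-2\mm_{\star,h}^{n-1}+\tfrac12\mm_{\star,h}^{n-2}\big)=\bR_h\boldsymbol{\delta}^n$ and $\boldsymbol{\delta}^n\coloneqq\tau^{-1}\big(\tfrac32\mm_\star^n-2\mm_\star^{n-1}+\tfrac12\mm_\star^{n-2}\big)$ is the exact BDF2 difference quotient. The idea is that $\boldsymbol{\delta}^n$ is, up to $\OO(\tau^2)$, equal to $\partial_t\mm_\star^n$. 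The latter is pointwise orthogonal to $\mm_\star^n$, which in turn is $\OO(\tau^2)$-close to $\mhat_\star^n$. Hence the normal component of $\boldsymbol{\delta}^n$ with respect to $\mhat_\star^n$ is $\OO(\tau^2)$, and the passage to Ritz projections and to the discrete projection costs $\OO(h)$.

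Concretely, I will insert intermediate terms and split
\begin{align*}
\sss_h^n &= (\mathbf{I}-\bPh(\mstarhhat^n))(\bR_h\boldsymbol{\delta}^n-\boldsymbol{\delta}^n_h) + (\bP(\mstarhhat^n)-\bPh(\mstarhhat^n))\boldsymbol{\delta}^n_h \\
&\quad + (\bP(\mhat_\star^n)-\bP(\mstarhhat^n))\boldsymbol{\delta}^n_h + (\mathbf{I}-\bP(\mhat_\star^n))\boldsymbol{\delta}^n_h,
\end{align*}
where I will simply take $\boldsymbol{\delta}^n_h\coloneqq\bR_h\boldsymbol{\delta}^n$, so that the first term vanishes. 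The second term is bounded by Lemma~\ref{lem: approximation discrete projection} with $k=1$ by $h\norm{\bR_h\boldsymbol{\delta}^n}_{\bH^1}\norm{\mstarhhat^n}_{\bW^{1,\infty}}^2\lesssim h$. Here I use Lemma~\ref{lem: well-posedness mhat star h} for the lower bound on $|\mstarhhat^n|$, \eqref{eq: Ritz_stability} for the $\bW^{1,\infty}$ bound, and $\norm{\boldsymbol{\delta}^n}_{\bH^1}\lesssim\norm{\partial_t\mm}_{L^\infty(\bH^1)}$. The third term is bounded by \eqref{eq: projection} with $k=1$ by $\norm{\bR_h\boldsymbol{\delta}^n}_{\bW^{1,\infty}}\norm{\mhat_\star^n-\mstarhhat^n}_{\bH^1}$. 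By \eqref{eq: Ritz_stability}, the first factor is bounded by $\norm{\partial_t\mm}_{L^\infty(\bW^{1,\infty})}$; if this norm is not contained in the list of $M$, I will replace it by an inverse estimate or add it, since \eqref{eq: regularity} guarantees it. By \eqref{eq: Ritz_error_1}, the second factor is $\lesssim h\norm{\mm}_{L^\infty(\bH^2)}$.

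The main term is the last one, $(\mathbf{I}-\bP(\mhat_\star^n))\bR_h\boldsymbol{\delta}^n$, for which I write $\bR_h\boldsymbol{\delta}^n=\boldsymbol{\delta}^n+(\bR_h-\mathbf{I})\boldsymbol{\delta}^n$. The Ritz part is bounded in $\bH^1$ by $\norm{\mathbf{I}-\bP(\mhat_\star^n)}_{\bW^{1,\infty}}\cdot h\norm{\boldsymbol{\delta}^n}_{\bH^2}\lesssim h\norm{\partial_t\mm}_{L^\infty(\bH^2)}$, using Lemma~\ref{lem: normalization bounds} and $|\mhat_\star^n|\ge1$. For the remaining part $(\mathbf{I}-\bP(\mhat_\star^n))\boldsymbol{\delta}^n = (\bN(\mhat_\star^n)\cdot\boldsymbol{\delta}^n)\bN(\mhat_\star^n)$, I will use $\mm_\star^n\cdot\partial_t\mm_\star^n=0$ to write
\[
\mhat_\star^n\cdot\boldsymbol{\delta}^n = (\mhat_\star^n-\mm_\star^n)\cdot\boldsymbol{\delta}^n + \mm_\star^n\cdot(\boldsymbol{\delta}^n-\partial_t\mm_\star^n).
\]
By Taylor expansion with integral remainder, $\mhat_\star^n-\mm_\star^n=-\int$ of $\partial_t^2\mm$ times a kernel of size $\tau^2$, and $\boldsymbol{\delta}^n-\partial_t\mm_\star^n$ is $\tau^2$ times an average of $\partial_t^3\mm$; both estimates hold in $\bH^1$-valued form, in the same way as in Lemma~\ref{lem: semi-discrete approximation} but measured in $\bH^1$. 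Since $\bH^1\cdot\bW^{1,\infty}\subset\bH^1$ and $\bN(\mhat_\star^n)\in\bW^{1,\infty}$ with $|\mhat_\star^n|\ge 1$, this yields an $\bH^1$ bound of order $\tau^2\big(\norm{\partial_t^2\mm}_{L^\infty(\bH^1)}+\norm{\partial_t^3\mm}_{L^\infty(\bH^1)}\big)$, with the factors $\boldsymbol{\delta}^n,\mm_\star^n$ controlled in $\bW^{1,\infty}$.

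I expect this last step to be the main obstacle. The product $(\mhat_\star^n-\mm_\star^n)\cdot\boldsymbol{\delta}^n$ has to be estimated in $\bH^1$, which needs $\boldsymbol{\delta}^n\in\bW^{1,\infty}$ together with the $\bH^1$ control of the Taylor remainders. It also needs the bound on $\partial_t^2\mm$, which is not explicitly in the list of $M$; I would obtain it from $\mm\in C^3([0,T],\bH^1)$ in \eqref{eq: regularity}. The hypothesis $1/2\le|\mhat_h^n|\le2$ appears not to be needed for this argument.
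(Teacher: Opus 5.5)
Your argument is correct. It uses the same ingredients as the paper: Lemma~\ref{lem: approximation discrete projection}, the $\bH^1$-Lipschitz bound \eqref{eq: projection}, the Ritz error estimates, and Taylor expansion. The difference is the intermediate object you compare against.

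The paper subtracts the vanishing term $(\mathbf{I}-\bP(\mm_\star^n))\partial_t\mm_\star^n$. It then bounds $\|\bR_h\boldsymbol{\delta}^n-\partial_t\mm_\star^n\|_{\bH^1(\Omega)}$ and $\|\bPh(\mstarhhat^n)\bR_h\boldsymbol{\delta}^n-\bP(\mm_\star^n)\partial_t\mm_\star^n\|_{\bH^1(\Omega)}$. In the second of these, the time error enters through the Lipschitz term $(\bP(\mstarhhat^n)-\bP(\mm_\star^n))\partial_t\mm_\star^n$ with $\|\mstarhhat^n-\mm_\star^n\|_{\bH^1(\Omega)}\lesssim h+\tau^2$, so spatial and temporal errors are mixed. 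As a by-product it gives $\|\vstarh^{n-1}-\partial_t\mm_\star^n\|_{\bH^1(\Omega)}\lesssim h+\tau^2$, which is reused in \eqref{eq: bound v and partial}.

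You compare against $\bP(\mhat_\star^n)$ instead. The Lipschitz bound is then needed only for the purely spatial perturbation $\mhat_\star^n\to\mstarhhat^n$. The whole $\mathcal{O}(\tau^2)$ contribution is the normal component $(\mathbf{I}-\bP(\mhat_\star^n))\boldsymbol{\delta}^n$, which you handle by an explicit identity based on $\mm_\star^n\cdot\partial_t\mm_\star^n=0$. This shows more transparently why the time error appears only in the normal direction. Nothing is lost: the paper's by-product follows from $\vstarh^{n-1}=\bR_h\boldsymbol{\delta}^n-\sss_h^n$ together with your bounds on $(\bR_h-\mathbf{I})\boldsymbol{\delta}^n$ and $\boldsymbol{\delta}^n-\partial_t\mm_\star^n$.

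Two small remarks on regularity:
\begin{itemize}
\item Your fallback of an inverse estimate for $\|\bR_h\boldsymbol{\delta}^n\|_{\bW^{1,\infty}(\Omega)}$ would cost a factor $h^{-1}$ and destroy the $\mathcal{O}(h)$ bound of your third term. Take $\partial_t\mm\in L^\infty(\bW^{1,\infty})$ from \eqref{eq: regularity} instead, as the paper also does.
\item Your explicit use of $\partial_t^2\mm\in L^\infty(\bH^1)$ is also tacitly needed in the paper. Its term $A_{2,3}$ invokes the $\bL^2$-bound \eqref{eq: consistency semi predictor} where an $\bH^1$-bound is required.
\end{itemize}
You are also right that the assumption $1/2\le|\mhat_h^n|\le 2$ plays no role; the paper's proof does not use it either.
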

\begin{proof}
	We begin by subtracting $(\mathbf{I}-\bP(\mm_\star^n))\partial_t\mm_\star^n=0$ from $\sss_h^n$ and have
\begin{equation}\label{eq: s_h^n}
	\begin{aligned}
		\sss_h^n & \stackrel{\eqref{eq: def s_h^n}}{=} (\mathbf{I}-\bPh(\mstarhhat^n))\frac{1}{\tau}\Big(\frac{3}{2}\mm_{\star,h}^{n} - 2\mm_{\star,h}^{n-1} + \frac{1}{2}\mm_{\star,h}^{n-2}\Big) - (\mathbf{I}-\bP(\mm_\star^n))\partial_t\mm_\star^n\\
		& \mkern+8mu= \Big[\frac{1}{\tau}\Big(\frac{3}{2}\mm_{\star,h}^{n} - 2\mm_{\star,h}^{n-1} + \frac{1}{2}\mm_{\star,h}^{n-2}\Big) - \partial_t\mm_\star^n\Big]\\
		&\mkern+50mu- \Big[\bPh(\mstarhhat^n)\frac{1}{\tau}\Big(\frac{3}{2}\mm_{\star,h}^{n} - 2\mm_{\star,h}^{n-1} + \frac{1}{2}\mm_{\star,h}^{n-2}\Big)-\bP(\mm_\star^n)\partial_t\mm_\star^n\Big].
	\end{aligned}
\end{equation}
In the following, we bound the two brackets of the right-hand side of \eqref{eq: s_h^n} separately.

\textbf{Step~1.}
The first term on the right-hand side of \eqref{eq: s_h^n} is bounded via
\begin{align*}
	A_1 &\coloneqq \norm[\Big]{\frac{1}{\tau}\Big(\frac{3}{2}\mm_{\star,h}^{n} - 2\mm_{\star,h}^{n-1} + \frac{1}{2}\mm_{\star,h}^{n-2}\Big) - \partial_t\mm_\star^n}_{\bH^1(\Omega)}\nonumber \\
	& \mkern+2mu\le \norm[\Big]{\frac{1}{\tau} \Big(\frac{3}{2}\mm_{\star,h}^{n} - 2\mm_{\star,h}^{n-1} + \frac{1}{2}\mm_{\star,h}^{n-2} -\frac{3}{2}\mm_\star^n + 2\mm_\star^{n-1} - \frac{1}{2}\mm_\star^{n-2} \Big)}_{\bH^1(\Omega)}\\
	& \mkern+35mu + \norm[\Big]{\frac{1}{\tau} \Big(\frac{3}{2}\mm_\star^n - 2\mm_\star^{n-1} + \frac{1}{2}\mm_\star^{n-2}\Big) - \partial_t\mm_\star^n}_{\bH^1(\Omega)}.\nonumber
\end{align*}
Thanks to \eqref{eq: bounds prop s}, we can bound
\begin{align*}
	\norm[\Big]{\frac{1}{\tau} &\Big(\frac{3}{2}\mm_{\star,h}^{n} - 2\mm_{\star,h}^{n-1} + \frac{1}{2}\mm_{\star,h}^{n-2} -\frac{3}{2}\mm_\star^n + 2\mm_\star^{n-1} - \frac{1}{2}\mm_\star^{n-2} \Big)}_{\bH^1(\Omega)}\\
	&\stackrel{\eqref{eq: Ritz_error_1}}{\lesssim} h \norm[\Big]{\frac{1}{\tau}\Big(\frac{3}{2}\mm_\star^n - 2\mm_\star^{n-1} + \frac{1}{2}\mm_\star^{n-2}\Big)}_{\bH^2(\Omega)}\stackrel{\eqref{eq: bound v star H2}}{\lesssim} h \norm{\partial_t\mm}_{L^\infty(\bH^2)}\stackrel{\eqref{eq: bounds prop s}}{\lesssim} h.
\end{align*}
Moreover, a Taylor expansion of $\mm$ around $t_{n-2}$ shows that 
	\begin{align*}
		\mm_\star^{n-1}&=\mm_\star^{n-2}+\tau\partial_t\mm_\star^{n-2}+\frac{\tau^2}{2}\partial_t^2\mm_\star^{n-2}+\frac{1}{2}\int_{t_{n-2}}^{t_{n-1}}(t_{n-1}-s)^2\partial_s^3\mm(s)\d s,\\
		\mm_\star^{n}&=\mm_\star^{n-2}+2\tau\partial_t\mm_\star^{n-2}+2\tau^2\partial_t^2\mm_\star^{n-2}+\frac{1}{2}\int_{t_{n-2}}^{t_{n}}(t_{n}-s)^2\partial_s^3\mm(s)\d s,\\
		\partial_t\mm_\star^n&= \partial_t\mm_\star^{n-2}+2\tau\partial_t^2\mm_\star^{n-2}+\int_{t_{n-2}}^{t_n}(t_n-s)\partial_s^3\mm(s)\d s.
	\end{align*}
	Therefore, we have 
	\begin{align*}
		&\frac{1}{\tau}\Big(\frac{3}{2}\mm_\star^n-2\mm_\star^{n-1}+\frac{1}{2}\mm_\star^{n-2}\Big)-\partial_t\mm_\star^n\\&= 
		\frac{1}{\tau}\Big(\frac{3}{4}\int_{t_{n-2}}^{t_n}(t_n-s)^2\partial_s^3\mm(s)\d s-\int_{t_{n-2}}^{t_{n-1}}(t_{n-1}-s)^2\partial_s^3\mm(s)\d s\Big)-\int_{t_{n-2}}^{t_n}(t_n-s)\partial_s^3\mm(s)\d s.
	\end{align*}
	This implies 
	\begin{equation}\label{eq: time derivative error}
		\norm[\Big]{\frac{1}{\tau}\Big(\frac{3}{2}\mm_\star^n-2\mm_\star^{n-1}+\frac{1}{2}\mm_\star^{n-2}\Big)-\partial_t\mm_\star^n}_{\bH^1(\Omega)}\lesssim \tau^2,
	\end{equation}
	where the hidden constant depends only on 
	$$
	\max_{t\in[0,T]}\norm{\partial^{3}_t\mm(t)}_{\bH^1(\Omega)}=\norm{\partial^{3}_t\mm}_{L^\infty(\bH^1)}\le M <\infty.
	$$ 
With a hidden constant depending only on $\gamma$, $|\Omega|$, and $M$, this implies that
\begin{equation}\label{eq: A1}
A_1 \lesssim h+\tau^2.
\end{equation} 

\textbf{Step~2.}
The second term of the right-hand side of \eqref{eq: s_h^n} is bounded via 

\begin{equation}\label{eq: A_2}
\begin{aligned}
	A_2\coloneqq&\norm[\Big]{\bPh(\mstarhhat^n)\frac{1}{\tau}\Big(\frac{3}{2}\mm_{\star,h}^{n} - 2\mm_{\star,h}^{n-1} + \frac{1}{2}\mm_{\star,h}^{n-2}\Big)-\bP(\mm_\star^n)\partial_t\mm_\star^n}_{\bH^1(\Omega)}\\	
	&\le\norm[\Big]{\big(\bPh(\mstarhhat^n)-\bP(\mstarhhat^n)\big)\frac{1}{\tau}\Big(\frac{3}{2}\mm_{\star,h}^{n} - 2\mm_{\star,h}^{n-1} + \frac{1}{2}\mm_{\star,h}^{n-2}\Big)}_{\bH^1(\Omega)}\\
	&\quad+ \norm[\Big]{\bP(\mstarhhat^n)\Big[\frac{1}{\tau} \Big(\frac{3}{2}\mm_{\star,h}^{n} - 2\mm_{\star,h}^{n-1} + \frac{1}{2}\mm_{\star,h}^{n-2}\Big) - \partial_t\mm_\star^n\Big]}_{\bH^1(\Omega)}\\
	&\quad+ \norm[\Big]{[\bP(\mstarhhat^n)-\bP(\mm_\star^n)]\partial_t\mm_\star^n}_{\bH^1(\Omega)}\eqcolon A_{2,1} + A_{2,2} + A_{2,3}.
\end{aligned}
\end{equation}
The first term of \eqref{eq: A_2} can be bounded by Lemma \ref{lem: approximation discrete projection} and \eqref{eq: bounds prop s} as
\begin{align*}
	A_{2,1}&=\norm[\Big]{\big(\bPh(\mstarhhat^n)-\bP(\mstarhhat^n)\big)\frac{1}{\tau}\Big(\frac{3}{2}\mm_{\star,h}^{n} - 2\mm_{\star,h}^{n-1} + \frac{1}{2}\mm_{\star,h}^{n-2}\Big)}_{\bH^1(\Omega)}\\
	&\mkern-5mu\stackrel{\eqref{eq: approximation discrete projection}}{\lesssim} h\norm[\Big]{\frac{1}{\tau}\Big(\frac{3}{2}\mm_{\star,h}^{n} - 2\mm_{\star,h}^{n-1} + \frac{1}{2}\mm_{\star,h}^{n-2}\Big)}_{\bH^1(\Omega)}\norm{\mstarhhat^n}_{\bW^{1,\infty}(\Omega)}^2\\
	&\lesssim h\norm[\Big]{\frac{3}{2}\d_t\mm_{\star,h}^n-\frac{1}{2}\d _t\mm_{\star,h}^{n-1}}_{\bW^{1,\infty}(\Omega)}\norm{\mhat_{\star,h}^n}_{\bW^{1,\infty}(\Omega)}^2\\
	&\mkern-5mu\stackrel{\eqref{eq: Ritz_stability}}{\lesssim} h\big(\norm{\d _t \mm_\star^n}_{\bW^{1,\infty}(\Omega)} + \norm{\d _t \mm_\star^{n-1}}_{\bW^{1,\infty}(\Omega)}\big)\norm{\mhat_{\star}^n}_{\bW^{1,\infty}(\Omega)}^2\\
	& \lesssim h \norm{\partial_t\mm}_{L^\infty(\bW^{1,\infty})}\norm{\mm}_{L^\infty(\bW^{1,\infty})}^2\stackrel{\eqref{eq: bounds prop s}}{\lesssim} h,
\end{align*}
where the hidden constant depends only on $\gamma$, $M$, and $h\le h_*$.

The second term of \eqref{eq: A_2} is bounded through an $\bH^1$-stability bound for $\bP$, combined with the estimate for $A_1$ obtained in Step 1: Indeed, it holds that
\begin{align*}
	A_{2,2}&=\norm[\Big]{\bP(\mstarhhat^n)\Big[\frac{1}{\tau} \Big(\frac{3}{2}\mm_{\star,h}^{n} - 2\mm_{\star,h}^{n-1} + \frac{1}{2}\mm_{\star,h}^{n-2}\Big) - \partial_t\mm_\star^n\Big]}_{\bH^1(\Omega)}\\
	&\stackrel{\eqref{eq: stability projection}}{\lesssim} \norm{\mstarhhat^n}_{\bW^{1,\infty}(\Omega)}^2 \, A_1\stackrel{\eqref{eq: Ritz_stability}}{\lesssim} \norm{\mhat_{\star}^n}_{\bW^{1,\infty}(\Omega)}^2 \, A_1\stackrel{\eqref{eq: bounds prop s}, \eqref{eq: A1}}{\lesssim}h + \tau^2,
\end{align*}
where the hidden constant depends only on $\gamma$, $|\Omega|$, $M$, and $h\le h_*$.

Finally, thanks to the Lipschitz-type continuity bounds of $\bP$ from Lemma \ref{lem: projection} and \eqref{eq: bounds prop s}, the third term of \eqref{eq: A_2} is bounded as
\begin{align*}
	A_{2,3}&=\norm[\Big]{[\bP(\mstarhhat^n)-\bP(\mm_\star^n)]\partial_t\mm_\star^n}_{\bH^1(\Omega)}\stackrel{\eqref{eq: projection}}{\lesssim} \norm{\partial_t\mm_\star^n}_{\bW^{1,\infty}(\Omega)}\norm{\mstarhhat^n-\mm_\star^n}_{\bH^1(\Omega)}\\
	&\lesssim \norm{\partial_t\mm}_{L^\infty(\bW^{1,\infty})}\big(\norm{\mstarhhat^n-\mhat_\star^n}_{\bH^1(\Omega)}+\norm{\mhat_\star^n-\mm_\star^n}_{\bH^1(\Omega)}\big)\\
	&\mkern-17mu\stackrel{\eqref{eq: Ritz_error_1},\eqref{eq: consistency semi predictor}}{\lesssim} \norm{\partial_t\mm}_{L^\infty(\bW^{1,\infty})}(h\norm{\mhat_\star^n}_{\bH^2(\Omega)} + \tau^2)\stackrel{\eqref{eq: bounds prop s}}{\lesssim} h + \tau^2,
\end{align*}
where the hidden constant depends only on $\gamma$, $M$ and $h\le h_*$.

Therefore, also $A_2$ is bounded by $\OO(h+\tau^2)$, where the hidden constant depends only on $\gamma$, $|\Omega|$, $M$, and $h\le h_*$. This concludes the proof.
\end{proof}
\subsection{Consistency of the full discretization}\label{section:consistency}
The following proposition provides an optimal bound on the defect $\dd_h^n$ of the full discretization, which is defined through \eqref{eq: defect} above.
\begin{proposition}\label{prop: consistency error}
	Let $M>0$. Suppose that the solution $\mm$ of LLG \eqref{eq: LLG system} satisfies the regularity assumptions~\eqref{eq: regularity} and $\ff$ satisfies \eqref{eq: regularity f}. Suppose that $\norm{\ff}_{L^\infty(\bL^2)}\le M$, $\max\limits_{n=2,\dots,N}\normO{\ff_\star^n-\ff_h^n}\le~C_{\ff} h$, and
	\begin{equation}\label{eq: bounds}
	\max\Bigl\{
	\norm{\mm}_{L^\infty(\bW^{2,\infty})},
		\norm{\partial_t\mm}_{L^\infty(\bW^{1,\infty})},
		\norm{\partial_t\mm}_{L^\infty(\bH^2)},\norm{\partial_t^2\mm}_{L^\infty(\bL^2)},
		\norm{\partial_t^3\mm}_{L^\infty(\bH^1)}
	\Bigr\}
	\le M.
	\end{equation}
	Let $h\le h_*$, where $h_*$ is defined in Lemma~\ref{lem: well-posedness mhat star h}. Then, for all $n=2,\ldots,N$, it holds that
	\begin{equation}\label{eq: bound d_h^n}
		\normO{\boldsymbol{d}_h^n}\le \newconst{consistency}(h+\tau^2),
	\end{equation}
	where the constant $\const{consistency}>0$ depends only on $\gamma$, $|\Omega|$, $C_{\ff}$, $M$, and on $h\le h_*$.
\end{proposition}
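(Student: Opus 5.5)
The plan is to use the representation \eqref{eq: defect 2}. Since $\dd_h^n\in\bTh(\mstarhhat^n)$, testing with $\pphi_h=\dd_h^n$ gives $\normO{\dd_h^n}^2=\scalarproductO{\bD_h^n}{\dd_h^n}$. We will subtract a suitable vanishing quantity from $\bD_h^n$ so that the remainder is $\OO(h+\tau^2)$ in $\bL^2$. The natural candidate is the exact LLG identity. Indeed, \eqref{eq: alternative LLG with P} says $\bD_\star^n=-\lamex^2\Delta\mm_\star^n-\ff_\star^n+\bP(\mm_\star^n)(\lamex^2\Delta\mm_\star^n+\ff_\star^n)=-(\mathbf{I}-\bP(\mm_\star^n))\heff(t_n)$. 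Hence $\bD_\star^n$ is parallel to $\mm_\star^n$ and is not zero. We will therefore write
\[
\scalarproductO{\bD_h^n}{\dd_h^n}=\scalarproductO{\bD_h^n-\bD_\star^n}{\dd_h^n}+\scalarproductO{\bD_\star^n}{\dd_h^n},
\]
and treat the two terms separately.

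\textbf{First term.} Here $\bD_h^n-\bD_\star^n=\alpha(\vstarh^{n-1}-\partial_t\mm_\star^n)+(\mstarhhat^n\times\vstarh^{n-1}-\mm_\star^n\times\partial_t\mm_\star^n)-(\ff_h^n-\ff_\star^n)$. We split $\vstarh^{n-1}-\partial_t\mm_\star^n$ as $(\vstarh^{n-1}-\vv_\star^{n-1})+(\vv_\star^{n-1}-\partial_t\mm_\star^n)$; the second piece is $\OO(\tau^2)$ by \eqref{eq: consistency semi derivative}. For the first piece, inserting $\bP(\mstarhhat^n)$ and $\bP(\mhat_\star^n)$ gives three contributions. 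The term $(\bPh-\bP)(\mstarhhat^n)[\cdot]$ is handled by Lemma~\ref{lem: approximation discrete projection} with $k=0$. The term $\bP(\mstarhhat^n)$ applied to the Ritz error of the BDF2 quotient is $\OO(h)$ by \eqref{eq: Ritz_error_1} and \eqref{eq: bound v star H2}. The term $(\bP(\mstarhhat^n)-\bP(\mhat_\star^n))$ applied to the BDF2 quotient is bounded by \eqref{eq: projection} with $k=0$ and \eqref{eq: Ritz_error_1}. Note that this is essentially Step~2 of Proposition~\ref{prop: bound s_h^n}, but in $\bL^2$. The cross-product difference is split in the usual way, using $\norm{\vstarh^{n-1}}_{\bL^\infty}\lesssim1$ from Lemma~\ref{lem: various v estimates}, $\normO{\mstarhhat^n-\mm_\star^n}\lesssim h+\tau^2$ from \eqref{eq: Ritz_error_1} and \eqref{eq: consistency semi predictor}, and the preceding bound on $\vstarh^{n-1}-\partial_t\mm_\star^n$. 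The $\ff$-difference is at most $C_\ff h$. Cauchy--Schwarz then bounds the first term by $(h+\tau^2)\normO{\dd_h^n}$.

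\textbf{Second term (main obstacle).} We must show $|\scalarproductO{\bD_\star^n}{\dd_h^n}|\lesssim(h+\tau^2)\normO{\dd_h^n}$. Write $\bD_\star^n=-\bg\,\mm_\star^n$ with scalar $\bg=\mm_\star^n\cdot\heff(t_n)$, which is in $L^\infty$ with bounded gradient under \eqref{eq: regularity}. The key is that $\dd_h^n$ is tangent to $\mstarhhat^n$ only at the nodes. We therefore write
\[
\mm_\star^n\cdot\dd_h^n=(\mm_\star^n-\mstarhhat^n)\cdot\dd_h^n+\big(\mstarhhat^n\cdot\dd_h^n-\III_h(\mstarhhat^n\cdot\dd_h^n)\big),
\]
which uses $\III_h(\mstarhhat^n\cdot\dd_h^n)=0$. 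The first summand contributes $\lesssim(h+\tau^2)\normO{\dd_h^n}$, since $\norm{\mm_\star^n-\mstarhhat^n}_{\bL^\infty}\lesssim h+\tau^2$ by \eqref{eq: Ritz_error_2} and a Taylor bound in $\bL^\infty$. For the second summand we use an elementwise interpolation estimate on the product of two affine functions. The elementwise Hessian is $\nabla\mstarhhat^n\cdot\nabla\dd_h^n$, so
\[
\normO{\mstarhhat^n\cdot\dd_h^n-\III_h(\mstarhhat^n\cdot\dd_h^n)}\lesssim h^2\norm{\nabla\mstarhhat^n}_{\bL^\infty}\normO{\nabla\dd_h^n}.
\]
The inverse estimate \eqref{eq: inverse estimate L^p} turns this into $\lesssim h\normO{\dd_h^n}$, and \eqref{eq: Ritz_stability} controls $\norm{\nabla\mstarhhat^n}_{\bL^\infty}$. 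Multiplying by $\norm{\bg}_{\bL^\infty}$ closes this term.

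Combining both terms and dividing by $\normO{\dd_h^n}$ yields \eqref{eq: bound d_h^n}. The constants depend on $\gamma$, $|\Omega|$, $C_\ff$ and $M$ through Lemmas~\ref{lem: various v estimates}, \ref{lem: semi-discrete approximation}, \ref{lem: approximation discrete projection} and the Ritz estimates. Quasi-uniformity enters only through the inverse estimate.
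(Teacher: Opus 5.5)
Your proof is correct, and it treats the normal-component term differently from the paper.

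\textbf{How the paper proceeds.} It inserts $\bPh(\mstarhhat^n)$, which is the identity on $\dd_h^n$, and $\bP(\mm_\star^n)$, which annihilates $\bD_\star^n$. This splits $\normO{\dd_h^n}^2$ into three terms:
\begin{enumerate}
\item[(i)] $\bD_h^n$ tested against $(\bPh-\bP)(\mstarhhat^n)\dd_h^n$, bounded by Lemma~\ref{lem: approximation discrete projection} and the inverse estimate;
\item[(ii)] $\bD_h^n-\bD_\star^n$, bounded exactly as in your first term by reusing Step~2 of Proposition~\ref{prop: bound s_h^n};
\item[(iii)] $\bD_\star^n$ tested against $(\bP(\mstarhhat^n)-\bP(\mm_\star^n))\dd_h^n$, bounded by the Lipschitz estimates of Lemma~\ref{lem: projection}.
\end{enumerate}

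\textbf{How your route differs.} You use the explicit structure $\bD_\star^n=-(\mm_\star^n\cdot\heff(t_n))\,\mm_\star^n$, so only the scalar $\mm_\star^n\cdot\dd_h^n$ enters. Nodal orthogonality then reduces the non-tangency defect to the interpolation error of an elementwise quadratic scalar, where $|\mathrm{D}_h^2(\mstarhhat^n\cdot\dd_h^n)|\lesssim|\nabla\mstarhhat^n|\,|\nabla\dd_h^n|$ is elementary.

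\textbf{What each route buys.} Yours needs two terms instead of three. In that step it avoids the normalization $\bN$, the stability of $\III_h$ on rational expressions behind Lemma~\ref{lem: approximation discrete projection}, and the $\bL^1$-Lipschitz bound for $\bP$. The non-tangency correction is also tested only against the exact normal field $\bD_\star^n$, not against all of $\bD_h^n$. The paper's route never computes $\bD_\star^n$; it uses only $\bP(\mm_\star^n)\bD_\star^n=0$. It therefore stays uniform with the projection machinery used in Lemma~\ref{lem: bounds r}.

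\textbf{One correction.} For the summand $(\mm_\star^n-\mstarhhat^n)\cdot\dd_h^n$, you should not claim $\norm{\mm_\star^n-\mstarhhat^n}_{\bL^\infty(\Omega)}\lesssim h+\tau^2$. The $\tau^2$ part would need $\partial_t^2\mm\in L^\infty(\bL^\infty)$, which \eqref{eq: regularity} does not provide; the assumptions only give $\OO(\tau)$ in $\bL^\infty$. The claim is also unnecessary. Since $g=\mm_\star^n\cdot\heff(t_n)\in\bL^\infty$, H\"older with exponents $\infty,2,2$ plus the $\bL^2$ bound from \eqref{eq: consistency semi predictor} and \eqref{eq: Ritz_error_1} suffices, exactly as you already do in the first term.

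Similarly, the assertion that $\nabla g$ is bounded does not follow from the assumptions, since it would need $\nabla\Delta\mm\in\bL^\infty$. It is never used, though.
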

\begin{proof}
	The proof is split into four steps.

	\textbf{Step~1.}
	Note that \(\partial_t\mm_\star^n\) and \(\mm_\star^n \times \partial_t \mm_\star^n\) belong to \(\bT(\mm_\star^n)\). Therefore, it holds that $\bP(\mm_\star^n)\partial_t\mm_\star^n = \partial_t\mm_\star^n$ and $\bP(\mm_\star^n)(\mm_\star^n \times \partial_t \mm_\star^n) = \mm_\star^n \times \partial_t \mm_\star^n$. Considering~\eqref{eq: D^n}, we can write equation \eqref{eq: alternative LLG with P} at time $t_n$ as
	$$
	\bP(\mm_\star^n) \bD_\star^n \stackrel{\eqref{eq: D^n}}{=}\alpha\partial_t \mm_\star^n + \mm_\star^n \times \partial_t \mm_\star^n - \bP(\mm_\star^n)\big(\lamex^2\Delta \mm_\star^n + \ff_\star^n\big) \stackrel{\eqref{eq: alternative LLG with P}}{=} 0.
	$$
	Using this and the fact that $\bP(\mm_\star^n)$ is symmetric, we can rewrite \eqref{eq: defect 2} for all $\pphi_h\in\bTh(\mstarhhat^n)$ and all $n\ge 2$ as
	\begin{align*}
		\scalarproductO{\boldsymbol{d}_h^n}{\pphi_h} &\stackrel{\eqref{eq: defect 2}}{=} \scalarproductO{\bD_h^n}{\pphi_h}=\scalarproductO{\bD_h^n}{\bPh(\mstarhhat^n)\pphi_h}-\scalarproductO{\bD_\star^n}{\bP(\mm_\star^n)\pphi_h}\\
		&\mkern+7mu= \scalarproductO{\boldsymbol{D}_h^n}{(\bPh(\mstarhhat^n)-\bP(\mstarhhat^n))\pphi_h}+\scalarproductO{\boldsymbol{D}_h^n-\bD_\star^n}{\bP(\mstarhhat^n)\pphi_h}	\\
		&\mkern+50mu+ \scalarproductO{\bD_\star^n}{{\big(}\bP(\mstarhhat^n)-\bP(\mm_\star^n){\big)}\pphi_h}.
	\end{align*}
	Choosing $\pphi_h=\boldsymbol{d}_h^n\in \bT_h(\mstarhhat^n)$, we are led to 
	\begin{equation} \label{eq: consistency error equation}
		\begin{aligned}
			\normO{\boldsymbol{d}_h^n}^2 &\le \normO{\boldsymbol{D}_h^n}\normO{\big(\bPh(\mstarhhat^n)-\bP(\mstarhhat^n)\big)\dd_h^n} 
			+ \normO{\boldsymbol{D}_h^n-\bD_\star^n}\normO{\dd_h^n} \\
			&\quad + \norm{\bD_\star^n}_{\bL^\infty(\Omega)}\normO{\bP(\mstarhhat^n)-\bP(\mm_\star^n)}\normO{\dd_h^n} 
			\eqqcolon A_1 + A_2 + A_3.
		\end{aligned}
	\end{equation}
	We estimate the terms $A_1$, $A_2$, and $A_3$ in three separate steps. More precisely, we will show that 
	$A_1\lesssim h\normO{\dd_h^n}$, $A_2\lesssim(h+\tau^2)\normO{\dd_h^n}$, and $A_3\lesssim (h+\tau^2)\normO{\dd_h^n}$ so that the claim~\eqref{eq: bound d_h^n} follows from \eqref{eq: consistency error equation}.

	\textbf{Step~2 (Bound on $\boldsymbol{A_1}$).} By definition \eqref{eq: D_h^n}, we have that
	\begin{equation}\label{eq: D_h^n triangular}
	\normO{\bD_h^n}\le \alpha\normO{\vstarh^{n-1}}+\normO{\mstarhhat^n\times\vstarh^{n-1}}+\normO{\lamex^2\Delta\mm_\star^n+\ff_h^n}.
	\end{equation}
	Moreover, we also have that
	\begin{align*}
		\normO{\mstarhhat^n\times\vstarh^{n-1}}&\le \norm{\mstarhhat^n}_{\bL^\infty(\Omega)}\normO{\vstarh^{n-1}}\stackrel{\eqref{eq: Ritz_stability}}{\lesssim}\norm{\mhat_\star^n}_{\bW^{1,\infty}(\Omega)}\normO{\vstarh^{n-1}}\stackrel{\eqref{eq: bounds},\eqref{eq: bounds C_X C_Y}}{\lesssim} 1.
	\end{align*}
	Together with $\normO{\lamex^2\Delta\mm_\star^n+\ff_\star^n}\lesssim\normO{\Delta\mm_\star^n}+\norm{\ff}_{L^\infty(\bL^2)}\le M$ from \eqref{eq: bounds} and $\normO{\ff_\star^n-\ff_h^n}\le C_{\ff} h$ by assumption, we obtain
	\begin{equation}\label{eq: bound D_h^n}
		\normO{\boldsymbol{D}_h^n} \lesssim 1.
	\end{equation}
	Thanks to Lemma	\ref{lem: approximation discrete projection}, the inverse estimate \eqref{eq: inverse estimate L^p}, and the $\bW^{1,\infty}$-stability of the Ritz projection~\eqref{eq: Ritz_stability}, we have
	\begin{align*}
		\normO{\big(\bP(\mstarhhat^n)&-\bPh(\mstarhhat^n)\big)\boldsymbol{d}_h^n} \stackrel{\eqref{eq: approximation discrete projection}}{\lesssim} h^2 \norm{\dd_h^n}_{\bH^1(\Omega)}\norm{\mstarhhat^n}_{\bW^{1,\infty}(\Omega)}^2\\
		&\stackrel{\eqref{eq: inverse estimate L^p}}{\lesssim} h \normO{\dd_h^n}\norm{\mstarhhat^n}_{\bW^{1,\infty}(\Omega)}^2\stackrel{\eqref{eq: Ritz_stability}}{\lesssim} h \normO{\dd_h^n}\norm{\widehat{\mm}_\star^n}_{\bW^{1,\infty}(\Omega)}^2\lesssim h \normO{\dd_h^n}\norm{\mm}^2_{L^\infty(\bW^{1,\infty})}.
	\end{align*}
	Together with \eqref{eq: bounds} and \eqref{eq: bound D_h^n}, this implies
	\begin{equation}\label{eq: bound A1}
		A_1\stackrel{\eqref{eq: bounds}}{\lesssim} h\normO{\boldsymbol{d}_h^n},
	\end{equation}
where the hidden constant depends only on $\gamma$, $|\Omega|$, $C_{\ff}$, $M$, and $h\le h_*$.

	\textbf{Step~3 (Bound on $\boldsymbol{A_2}$).} By definition \eqref{eq: D^n}--\eqref{eq: D_h^n}, we notice that
	\begin{align*}
		\normO{\boldsymbol{D}_h^n-\bD_\star^n}&\le \alpha \normO{\vstarh^{n-1}-\partial_t\mm_\star^n}+\normO{\mstarhhat^n\times\vstarh^{n-1}-\mm_\star^n\times\partial_t\mm_\star^n}+\normO{\ff_\star^n-\ff_h^n}.
	\end{align*}
	We note that $\partial_t\mm_\star^n=\bP(\mm_\star^n)\partial_t\mm_\star^n$, because of $\partial_t\mm_\star^n\in \bT(\mm_\star^n)$. With the term $A_2$ from \eqref{eq: A_2}, Step~2 of the proof of Proposition~\ref{prop: bound s_h^n} thus shows 
	\begin{align}\label{eq: bound v and partial}
		\normO{\vstarh^{n-1}-\partial_t\mm_\star^n} \stackrel{\eqref{eq: v_star_ritz}}{=} \normO[\Big]{\bPh(\mstarhhat^n)\frac{1}{\tau}\Big(\frac{3}{2}\mm_{\star,h}^{n} - 2\mm_{\star,h}^{n-1} + \frac{1}{2}\mm_{\star,h}^{n-2}\Big)-\bP(\mm_\star^n)\partial_t\mm_\star^n}\stackrel{\eqref{eq: A_2}}{\le} A_2\lesssim h+\tau^2,
	\end{align}
	where the hidden constant depends only on $\gamma$, $|\Omega|$, $M$, and on $h\le h_*$. Thanks to \eqref{eq: bounds}, we have
	\begin{align*}
		\normO{\mstarhhat^n\times \vstarh^{n-1}-\mhat_{\star}^{n}\times\vv_\star^{n-1}}&=\normO{\mstarhhat^n\times(\vstarh^{n-1}-\vv_\star^{n-1})+(\mstarhhat^n-\mhat_\star^n)\times\vv_\star^{n-1}}\nonumber\\
		&\mkern-200mu\le \norm{\mstarhhat^n}_{\bL^\infty(\Omega)}\big(\normO{\vv_{\star,h}^{n-1}-\partial_t\mm_{\star}^n}+\normO{\partial_t\mm_{\star}^n-\vv_\star^{n-1}} \big)+\norm{\vv_\star^{n-1}}_{\bL^\infty(\Omega)}\normO{\mstarhhat^n-\mhat_\star^n}\\
		&\mkern-233mu\stackrel{\eqref{eq: bound v and partial},\eqref{eq: consistency semi derivative},\eqref{eq: Ritz_error_1}}{\lesssim} \norm{\mstarhhat^n}_{\bL^\infty(\Omega)}(h+\tau^2)+\norm{\vv_\star^{n-1}}_{\bL^\infty(\Omega)}\norm{\mhat_\star^n}_{\bH^2(\Omega)}h\nonumber\\
		&\mkern-205mu\stackrel{\eqref{eq: Ritz_stability}}{\lesssim} \norm{\widehat{\mm}_\star^n}_{\bW^{1,\infty}(\Omega)}(h+\tau^2)+\norm{\vv_\star^{n-1}}_{\bL^\infty(\Omega)}\norm{\mhat_\star^n}_{\bH^2(\Omega)}h\nonumber\stackrel{\eqref{eq: bounds C_X C_Y}}{\lesssim} h+\tau^2.
	\end{align*}
	Thanks to \eqref{eq: bounds}, Lemma \ref{lem: semi-discrete approximation} proves
	\begin{align*}
		\normO{\mhat_\star^n\times\vv_\star^{n-1}-\mm_\star^n\times\partial_t\mm_\star^n}&=\normO{\mhat_\star^n\times(\vv_\star^{n-1}-\partial_t\mm_\star^n)+(\mhat_\star^n-\mm_\star^n)\times\partial_t\mm_\star^n}\nonumber\\
		&\mkern-200mu\lesssim \norm{\mhat_\star^n}_{\bL^\infty(\Omega)}\normO{\vv_\star^{n-1}-\partial_t\mm_\star^n}+\normO{\mhat_\star^n-\mm_\star^n}\norm{\partial_t\mm_\star^n}_{\bL^\infty(\Omega)}\stackrel{\eqref{eq: consistency semi predictor},\eqref{eq: consistency semi derivative},\eqref{eq: bounds}}{\lesssim}\tau^2.
	\end{align*}
	Combining the last two estimates, we have that 
	\begin{align}
		&\normO{\mstarhhat^n\times\vstarh^{n-1}-\mm_\star^n\times\partial_t\mm_\star^n}\nonumber\\
		&\le\normO{\mstarhhat^n\times \vstarh^{n-1}-\mhat_{\star}^{n}\times\vv_\star^{n-1}}+\normO{\mhat_\star^n\times\vv_\star^{n-1}-\mm_\star^n\times\partial_t\mm_\star^n}\lesssim h + \tau^2.\label{eq: bound two cross 3}
	\end{align}
	Finally, recall that $\normO{\ff_\star^n-\ff_h^n}\le C_{\ff} h$.
	Together with \eqref{eq: bound v and partial}--\eqref{eq: bound two cross 3}, this gives
	\begin{equation}\label{eq: bound D_h^n - D^n}
		\normO{\boldsymbol{D}_h^n-\bD_\star^n}\lesssim h+\tau^2
	\end{equation}
	and hence, with a hidden constant depending only on $\gamma$, $M$, and $h\le h_*$,
	\begin{equation}\label{eq: bound A2}
		A_2\lesssim (h+\tau^2)\normO{\boldsymbol{d}_h^n}.
	\end{equation}

	\textbf{Step~4 (Bound on $\boldsymbol{A_3}$).} Recall $\bD_\star^n$ from \eqref{eq: D^n}. The regularity assumptions \eqref{eq: bounds} guarantee that
	$
		\norm{\bD_\star^n}_{\bL^\infty(\Omega)}\lesssim 1.
	$
	Moreover, applying Lemma \ref{lem: projection} with $\vv \equiv \boldsymbol{1}$ yields
	\begin{equation*}
		\begin{split}
		\normO{\bP(\mstarhhat^n)-\bP(\mm_\star^n)}&\stackrel{\eqref{eq: projection}}{\lesssim}\normO{\mstarhhat^n-\mm_\star^n}\norm{\boldsymbol{1}}_{\bL^\infty(\Omega)}\\
		&\mkern+5mu\le \normO{(\bI-\bR_h)\mhat_\star^n}+\normO{\mhat_\star^n-\mm_\star^n}\stackrel{\eqref{eq: Ritz_error_1},\eqref{eq: consistency semi predictor}}{\lesssim}\norm{\mhat_\star^n}_{\bH^2(\Omega)}h+ \tau^2.
		\end{split}
	\end{equation*}
	Altogether, this implies
	\begin{equation}\label{eq: bound A3}
		A_3\lesssim (h+\tau^2)\normO{\boldsymbol{d}_h^n},
	\end{equation}
	where the hidden constant depends only on $\gamma$, $M$, and on $h\le h_*$.	As noted in Step 1, the estimates of Step 2--4 conclude the proof.
\end{proof}
\section{Stability estimates and proof of main result}\label{sec: stability and main proof}
\subsection{Stability of the full discretization.}
Next, we obtain an estimate on the error $\boldsymbol{e}_h^n$ from \eqref{eq: e_h^n} using the error equation \eqref{eq: error equation}. Notice that $\oomega_h^{n-1}$ is not an admissible test function for~\eqref{eq: error equation}, since $\oomega_h^{n-1}\notin\bTh(\mhat_h^n)$ in general. Following \cite{feischl,bkw2024}, we thus consider $\pphi_h\coloneqq\bPh(\mhat_h^n)\oomega_h^{n-1}\in\bTh(\mhat_h^n)$ as a test function in \eqref{eq: error equation}. The following lemma shows that $\pphi_h$ is a perturbation of $\oomega_h^{n-1}$ and provides an estimate on the correction error.

\begin{lemma}\label{lem: q_h^n}
	Suppose that $\mm$ satisfies the regularity assumption~\eqref{eq: regularity} and suppose that $1/2\le |\mhat_h^n| \le 2$. Let $M>0$ such that $\norm{\partial_t\mm}_{L^\infty(\bW^{1,\infty})}\le M$. Then, for all $n=2,\ldots,N$ there exists a function $\boldsymbol{q}_h^n\in \boldsymbol{\mathcal{S}}^1({\mathcal{T}}_h)$ such that
	\begin{equation}
		\pphi_h \coloneqq \bPh(\mhat_h^n)\oomega_h^{n-1}=\oomega_h^{n-1}+\boldsymbol{q}_h^n,
	\end{equation}
	and, for any $k\in\{0,1\}$, it holds that
	\begin{equation}\label{eq: bound q_h^n}
		\norm{\boldsymbol{q}_h^n}_{\bH^k(\Omega)}\le \newconst{const_q} \norm{\widehat{\ee}_h^{n}}_{\bH^k(\Omega)}.
	\end{equation} 
	The constant $\const{const_q}>0$ depends only on $\gamma$, $|\Omega|$, $M$, and on $h\le h_*$.
\end{lemma}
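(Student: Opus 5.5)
The plan is to define $\qq_h^n \coloneqq \bPh(\mhat_h^n)\oomega_h^{n-1}-\oomega_h^{n-1}$, which lies in $\boldsymbol{\mathcal{S}}^1(\mathcal{T}_h)$ by construction, and to exploit that both $\vv_h^{n-1}$ and $\vstarh^{n-1}$ are tangential with respect to \emph{different} predictors. Since $\vv_h^{n-1}\in\bTh(\mhat_h^n)$, we have $\bPh(\mhat_h^n)\vv_h^{n-1}=\vv_h^{n-1}$. Similarly, since $\vstarh^{n-1}\in\bTh(\mstarhhat^n)$, we have $\bPh(\mstarhhat^n)\vstarh^{n-1}=\vstarh^{n-1}$. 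Hence
\begin{equation*}
\qq_h^n = \bPh(\mhat_h^n)\vv_h^{n-1}-\vv_h^{n-1} - \big(\bPh(\mhat_h^n)\vstarh^{n-1}-\vstarh^{n-1}\big) = -\big(\bPh(\mhat_h^n)-\bPh(\mstarhhat^n)\big)\vstarh^{n-1}.
\end{equation*}
The unknown discrete velocity $\vv_h^{n-1}$ thus disappears entirely, and $\qq_h^n$ is the Lipschitz difference of the discrete projection applied to a \emph{known}, smooth discrete function.

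Next, I apply the Lipschitz estimate \eqref{eq: discrete projection lipschitz estimate} from Lemma~\ref{lem: stability discrete projection} with $\uu_h=\mhat_h^n$, $\widetilde{\uu}_h=\mstarhhat^n$, and $\vv_h=\vstarh^{n-1}$. This gives
\begin{equation*}
\norm{\qq_h^n}_{\bH^k(\Omega)}\le \const{lipsch_Ph}\norm{\vstarh^{n-1}}_{\bW^{k,\infty}(\Omega)}\norm{\widehat{\ee}_h^n}_{\bH^k(\Omega)}.
\end{equation*}
The factor $\norm{\vstarh^{n-1}}_{\bW^{1,\infty}(\Omega)}$ is bounded by $\const{C_X}$ from Lemma~\ref{lem: various v estimates} with $X=\bW^{1,\infty}(\Omega)$. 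That lemma only uses $\norm{\partial_t\mm}_{L^\infty(\bW^{1,\infty})}$ and $\norm{\mm}_{L^\infty(\bW^{1,\infty})}$, both finite under \eqref{eq: regularity}.

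The main obstacle is verifying the hypotheses of Lemma~\ref{lem: stability discrete projection}. The pointwise bounds $c\le|\uu_h|,|\widetilde{\uu}_h|\le c^{-1}$ are fine: $1/2\le|\mhat_h^n|\le 2$ by assumption, and $1/4\le|\mstarhhat^n|\le 4$ by Lemma~\ref{lem: well-posedness mhat star h} for $h\le h_*$, so $c=1/4$ works. The difficulty is the requirement $\norm{\uu_h}_{\bW^{k,\infty}}\le M$ on $\mhat_h^n$ for $k=1$, which is not given a priori.

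I would avoid this by inspecting the proof of \eqref{eq: projection}: for the difference $\big(\bP(\uu)-\bP(\widetilde{\uu})\big)\vv = (\bN(\widetilde{\uu})\cdot\vv)\bN(\widetilde{\uu})-(\bN(\uu)\cdot\vv)\bN(\uu)$, the $\bH^1$ bound only needs $\bL^\infty$ control of $\bN(\cdot)$ and of $\vv$, together with the $\bW^{1,2}$ Lipschitz bound \eqref{eq: normalization bounds 2} and the $\bW^{1,\infty}$ bound on $\vv$. Expanding the gradient of $\bN(\uu)\otimes\bN(\uu)-\bN(\widetilde{\uu})\otimes\bN(\widetilde{\uu})$ also produces terms of the form $\nabla\bN(\uu)\,(\bN(\uu)-\bN(\widetilde{\uu}))$. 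I would place the gradient on the smooth argument $\widetilde{\uu}_h=\mstarhhat^n$, whose $\bW^{1,\infty}$ norm is controlled via \eqref{eq: Ritz_stability}, and write the remaining difference symmetrically so that only $\nabla(\bN(\uu)-\bN(\widetilde{\uu}))$ in $\bL^2$ appears. For $\bPh$, the stability \eqref{eq: stability I_h} of $\III_h$ passes this to the discrete projection.

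If this bookkeeping fails, the fallback is an inverse estimate. One uses $\norm{\nabla\mhat_h^n}_{\bL^\infty}\le\norm{\nabla\mstarhhat^n}_{\bL^\infty}+Ch^{-d/2-1}\normO{\widehat{\ee}_h^n}$ and the induction hypothesis on the error that underlies the assumption $1/2\le|\mhat_h^n|\le2$. The case $k=0$ uses the $\bL^2$ estimate \eqref{eq: discrete projection lipschitz estimate} with $k=0$, needing only $\bL^\infty$ bounds, so it is unproblematic. The constant then depends on $\gamma$, $|\Omega|$, $M$, and $h\le h_*$, as claimed.
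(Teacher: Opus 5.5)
Your decomposition is the paper's. From $\bPh(\mhat_h^n)\vv_h^{n-1}=\vv_h^{n-1}$ and $\bPh(\mstarhhat^n)\vstarh^{n-1}=\vstarh^{n-1}$ the paper obtains $\qq_h^n=(\bPh(\mstarhhat^n)-\bPh(\mhat_h^n))\vstarh^{n-1}$, then applies \eqref{eq: discrete projection lipschitz estimate} together with \eqref{eq: bounds C_X C_Y}.

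You go further by checking the hypotheses of Lemma~\ref{lem: stability discrete projection}. The paper never verifies $\norm{\mhat_h^n}_{\bW^{1,\infty}(\Omega)}\le M$ for $k=1$. Nothing in the proof of Proposition~\ref{prop: stability full discretization} supplies it either, since the induction only controls $\norm{\ee_h^j}_{\bL^\infty(\Omega)}$. So your concern is legitimate, and your primary fix is what makes the $k=1$ case rigorous.

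The fix works as follows. In the gradient of $(\bP(\uu)-\bP(\widetilde{\uu}))\vv$, write $\partial_j\bN(\uu)=\partial_j\bN(\widetilde{\uu})+\partial_j\big(\bN(\uu)-\bN(\widetilde{\uu})\big)$ and use $|\bN(\uu)-\bN(\widetilde{\uu})|\le 2$. Then only $\norm{\nabla\mstarhhat^n}_{\bL^\infty(\Omega)}\lesssim\norm{\mhat_\star^n}_{\bW^{1,\infty}(\Omega)}$ enters. The transfer to $\bPh$ through \eqref{eq: stability I_h} needs no gradient information on $\mhat_h^n$, because the relevant quotient has the polynomial denominator $|\mhat_h^n|^2|\mstarhhat^n|^2$, which is bounded above and below.

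One caveat: you also cannot quote \eqref{eq: normalization bounds 2} for $k=1$ as a black box. Its proof via \eqref{eq: in appendix 2} contains the term $|\nabla\uu|\,|\uu-\widetilde{\uu}|$, so it silently uses $\bL^\infty$ bounds on both gradients. With a constant depending only on $c$ the estimate is in fact false. Take $\widetilde{\uu}=(\cos Kx_1,\sin Kx_1,0)$ and $\uu=\widetilde{\uu}+\epsilon(1,0,0)$: the left-hand side is of order $K\epsilon$ and the right-hand side of order $\epsilon$. The same one-sided bookkeeping repairs it: use $|\nabla\uu|\le|\nabla\widetilde{\uu}|+|\nabla(\uu-\widetilde{\uu})|$ together with $|\uu-\widetilde{\uu}|\le 2c^{-1}$. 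The resulting constant depends on $c$ and $\norm{\nabla\widetilde{\uu}}_{\bL^\infty(\Omega)}$, and for $\widetilde{\uu}=\mstarhhat^n$ the latter is controlled by \eqref{eq: Ritz_stability}.

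Discard the inverse-estimate fallback. The only information available on $\widehat{\ee}_h^n$ is $\norm{\widehat{\ee}_h^n}_{\bL^\infty(\Omega)}\le 3\rho$ from the induction, or $\norm{\widehat{\ee}_h^n}_{\bH^1(\Omega)}=\mathcal{O}(h+\tau^2)$. Inverse estimates turn these into bounds $h^{-1}\rho$ or $h^{-d/2}(h+\tau^2)$ for $\norm{\nabla\widehat{\ee}_h^n}_{\bL^\infty(\Omega)}$, and neither is uniformly bounded. For $d=3$, already $h^{-3/2}\cdot h\to\infty$; for $d=2$, the term $\tau^2/h$ is unbounded under \eqref{eq: CFL}. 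Besides, the lemma as stated has no induction hypothesis at its disposal.
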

\begin{proof}
	We note that $\bPh(\mhat_h^n)\vv_h^{n-1}=\vv_h^{n-1}\in \bTh(\mhat_h^n)$ by construction of the method \eqref{eq: discrete formulation} and $\bPh(\mstarhhat^n)\vstarh^{n-1} = \vstarh^{n-1}\in \bTh(\mstarhhat^n)$ by definition \eqref{eq: v_star_ritz}. Therefore,
	$\pphi_h = \bPh(\mhat_h^n)\oomega_h^{n-1}$ can be rewritten as
	\begin{align*}
		\pphi_h = \bPh(\mhat_h^n)\oomega_h^{n-1} &\stackrel{\eqref{eq: omega_h^n-1}}{=} \bPh(\mhat_h^n)\vv_h^{n-1} - \bPh(\mhat_h^n)\vv_{\star,h}^{n-1} \\
		& \mkern+7mu= \bPh(\mhat_h^n)\vv_h^{n-1}-\bPh(\mstarhhat^n)\vstarh^{n-1} + \big(\bPh(\mstarhhat^n) - \bPh(\mhat_h^n)\big)\vv_{\star,h}^{n-1} \\
		& \mkern+7mu= \vv_h^{n-1} - \vstarh^{n-1} + \big(\bPh(\mstarhhat^n) - \bPh(\mhat_h^n)\big)\vv_{\star,h}^{n-1}\eqcolon \oomega_h^{n-1} + \boldsymbol{q}_h^n.
	\end{align*}
	To bound $\norm{\boldsymbol{q}_h^n}_{\bH^k(\Omega)}$, we apply Lemma \ref{lem: stability discrete projection} to obtain
	\begin{align*}
		\norm{\boldsymbol{q}_h^n}_{\bH^k(\Omega)} &\lesssim \norm{\vv_{\star,h}^{n-1}}_{\bW^{k,\infty}(\Omega)} \norm{\mstarhhat^n-\mhat_h^n}_{\bH^{k}(\Omega)}\stackrel{\eqref{eq: bounds C_X C_Y}}{\lesssim} \norm{\mstarhhat^n-\mhat_h^n}_{\bH^k(\Omega)}\stackrel{\eqref{eq: hat_e_h^n}}{=}\norm{\widehat{\ee}_h^n}_{\bH^k(\Omega)},
	\end{align*}
	where the hidden constants depend only on $\gamma$, $|\Omega|$, $k$, $M$, and $h\le h_*$.
\end{proof}

We can finally prove the following stability result, which turns out to be the key result besides Proposition~\ref{prop: consistency error} and Proposition \ref{prop: bound s_h^n}.
\begin{proposition}[Stability of the full discretization]\label{prop: stability full discretization}
	Suppose that $\mm$ satisfies the regularity assumption \eqref{eq: regularity}. Let $M>0$ such that
		\begin{equation}\label{eq: regularity last prop}
			\max\Bigl\{
			\norm{\mm}_{L^\infty(\bW^{2,\infty})},
			\norm{\partial_t\mm}_{L^\infty(\bW^{1,\infty})},
			\norm{\partial_t\mm}_{L^\infty(\bH^{2})},
			\norm{\partial_t^{2}\mm}_{L^\infty(\bL^{2})},
			\norm{\partial_t^{3}\mm}_{L^\infty(\bH^{1})}
			\Bigr\}\le M.
		\end{equation}
		and
		$
			\norm{\ff}_{L^\infty(\bL^\infty)}\le M.
		$
		Recall the error $\ee_h^n$ from \eqref{eq: e_h^n}, recall $h_*$ from Lemma~\ref{lem: well-posedness mhat star h} and the assumption on the initial value $\mm^0$ from \eqref{eq: assumption m0}. Let $C_{\rm CFL}>0$ and $0<\varepsilon<1$. Then, there exist constants $\bar{h}, \bar{\tau}>0$ such that for any $h\le \bar{h}\le h_*$ and $\tau \le \bar{\tau}$ satisfying the (mild) condition
		\begin{equation}\label{eq: CFL}
			\tau \le C_{\rm CFL} h^{(1+\varepsilon)/4}
		\end{equation}
		Algorithm~\ref{alg: full discr} guarantees the following bound: For any $n=0,\ldots,N$, it holds that
		\begin{equation}\label{eq: stability full discretization}
			\norm{\boldsymbol{e}_h^n}_{\bH^1(\Omega)}^2\le \newconst{C_stab}\Big(\norm{\ee_h^0}_{\bH^1(\Omega)}^2+\norm{\ee_h^1}_{\bH^1(\Omega)}^2+\tau\sum_{j=2}^{n}\norm{\sss_h^j}^2_{\bH^1(\Omega)}+\tau\sum_{j=2}^{n}\normO{\dd_h^j}^2\Big).
		\end{equation}
		The constant $\const{C_stab}>0$, depends only on $\gamma$, $\alpha$, $\lamex$, $M$, $|\Omega|$, $\mu^\pm$, $T$, $C_{\rm CFL}$, $C_0$, $h\le \bar{h}$, and $\tau \le \bar{\tau}$, but is independent of $h$, $\tau$, and $n$.
\end{proposition}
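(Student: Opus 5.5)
The argument is an energy estimate for the error equation \eqref{eq: error equation}, tested with $\pphi_h=\bPh(\mhat_h^n)\oomega_h^{n-1}=\oomega_h^{n-1}+\qq_h^n$ from Lemma~\ref{lem: q_h^n}. It is wrapped in an induction on $n$ that simultaneously carries the hypothesis $1/2\le|\mhat_h^n|\le 2$ a.e.\ and an a-priori bound $\norm{\ee_h^j}_{\bW^{1,\infty}(\Omega)}\le 1$ (or smaller) for $j<n$. These are needed to apply Lemmas~\ref{lem: bounds r}, \ref{lem: q_h^n} and Proposition~\ref{prop: bound s_h^n}.

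\textbf{Energy step.} Assume the hypotheses hold up to step $n$. With this test function, the term $\alpha\langle\oomega_h^{n-1},\oomega_h^{n-1}+\qq_h^n\rangle_\Omega$ gives $\alpha\normO{\oomega_h^{n-1}}^2$ minus a perturbation bounded by $\normO{\oomega_h^{n-1}}\normO{\widehat\ee_h^n}$. The gyroscopic term $\langle\mhat_h^n\times\oomega_h^{n-1},\oomega_h^{n-1}\rangle_\Omega$ vanishes, and its $\qq_h^n$-part is controlled by $\norm{\mhat_h^n}_{\bL^\infty}\normO{\oomega_h^{n-1}}\normO{\widehat\ee_h^n}$. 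The term $\langle\widehat\ee_h^n\times\vstarh^{n-1},\pphi_h\rangle_\Omega$ is bounded using \eqref{eq: bounds C_X C_Y}. The residual is handled by Lemma~\ref{lem: bounds r}. For the exchange term, I insert \eqref{eq: omega_h^n-1}, i.e.\ $\oomega_h^{n-1}=\frac1\tau(\frac32\ee_h^n-2\ee_h^{n-1}+\frac12\ee_h^{n-2})+\sss_h^n$. The BDF2 part paired with $\nabla\ee_h^n$ is treated by the Dahlquist $G$-stability identity (the multiplier-free BDF2 identity $\langle\frac32a^n-2a^{n-1}+\frac12a^{n-2},a^n\rangle\ge \|G\text{-norm}\|^n-\|G\text{-norm}\|^{n-1}$ with $G$-norm equivalent to $\norm{\nabla\ee_h^n}^2+\norm{\nabla\ee_h^{n-1}}^2$). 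This yields a telescoping discrete $\bH^1$-seminorm energy. The remaining pieces are $\lamex^2\langle\nabla\ee_h^n,\nabla\sss_h^n\rangle_\Omega$, bounded via Young by $\norm{\sss_h^n}_{\bH^1}^2+\normO{\nabla\ee_h^n}^2$, and $\lamex^2\langle\nabla\ee_h^n,\nabla\qq_h^n\rangle_\Omega\lesssim\normO{\nabla\ee_h^n}\norm{\widehat\ee_h^n}_{\bH^1}$ by Lemma~\ref{lem: q_h^n} with $k=1$. Absorbing $\normO{\oomega_h^{n-1}}^2$ terms into the left side and using $\widehat\ee_h^n=2\ee_h^{n-1}-\ee_h^{n-2}$, I obtain
\[
E^n-E^{n-1}+\tau\tfrac{\alpha}{2}\normO{\oomega_h^{n-1}}^2\lesssim\tau\big(\norm{\ee_h^n}^2_{\bH^1}+\norm{\ee_h^{n-1}}^2_{\bH^1}+\norm{\ee_h^{n-2}}^2_{\bH^1}+\norm{\sss_h^n}^2_{\bH^1}+\normO{\dd_h^n}^2\big).
\]
The $\bL^2$ part of the $\bH^1$ norm is recovered separately. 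Since $\ee_h^n=\frac43\ee_h^{n-1}-\frac13\ee_h^{n-2}+\frac23\tau(\oomega_h^{n-1}-\sss_h^n)$, the bound on $\tau\normO{\oomega_h^{n-1}}^2$ controls $\normO{\ee_h^n}^2$ telescopically as well. Summing over $n$ and applying a discrete Gronwall lemma (for $\tau\le\bar\tau$ small, so the implicit $\tau\norm{\ee_h^n}^2$ term can be absorbed) gives \eqref{eq: stability full discretization} at step $n$.

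\textbf{Closing the induction.} This is the main obstacle. Combining \eqref{eq: stability full discretization} with \eqref{eq: bound s_h^n}, \eqref{eq: bound d_h^n}, \eqref{eq: assumption m0}, \eqref{eq: assumption m1} and \eqref{eq: Ritz_error_1} gives $\norm{\ee_h^n}_{\bH^1}\lesssim h+\tau^2$. The inverse estimate \eqref{eq: inverse estimate L^p} together with the $L^2\to L^\infty$ inverse estimate $\norm{\vv_h}_{\bL^\infty}\lesssim h^{-d/2}\normO{\vv_h}$, i.e.\ $h^{-1/2}$ from $\bH^1$ for $d=3$, gives $\norm{\ee_h^n}_{\bL^\infty}\lesssim h^{-1/2}(h+\tau^2)$. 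The CFL condition \eqref{eq: CFL} gives $\tau^2h^{-1/2}\le C_{\rm CFL}^2h^{\varepsilon/2}\to0$, which is exactly why the exponent $(1+\varepsilon)/4$ appears. Hence $\norm{\widehat\ee_h^{n+1}}_{\bL^\infty}$ is small for $h\le\bar h$. Since Lemma~\ref{lem: well-posedness mhat star h} gives $|\mstarhhat^{n+1}|$ close to $1$ (indeed within $\mathcal{O}(h+\tau^2)$ of $|\mhat_\star^{n+1}|\in[1,3]$, and $|\mhat_\star^{n+1}|=1+\mathcal{O}(\tau^2)$), I conclude $1/2\le|\mhat_h^{n+1}|\le2$ a.e. The constants must not depend on $n$, which holds because the Gronwall constant depends only on $T$. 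Where $\bW^{1,\infty}$ control of $\mhat_h^n$ is needed (in Lemma~\ref{lem: stability discrete projection} with $k=1$), I would use only the $\bH^k$ Lipschitz form with $\vstarh$ in $\bW^{1,\infty}$, which avoids requiring a $\bW^{1,\infty}$ bound on the discrete error. If that turns out to be insufficient, the fallback is to bound $\norm{\nabla\ee_h}_{\bL^\infty}\lesssim h^{-3/2}(h+\tau^2)$, which would require a stronger CFL condition. I would try to avoid that fallback.
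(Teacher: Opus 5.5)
This is essentially the paper's proof. You use the same test function $\bPh(\mhat_h^n)\oomega_h^{n-1}=\oomega_h^{n-1}+\qq_h^n$, the same BDF2 $G$-stability telescoping for the exchange term, absorption plus Gronwall with $\bar\tau$ small, and recover $\normO{\ee_h^n}$ from $\tau\sum_j\normO{\oomega_h^{j-1}}^2$. The paper solves your recursion explicitly via $1/\delta(\zeta)=\sum_j(1-3^{-(j+1)})\zeta^j$, whose bounded coefficients reflect the characteristic roots $1$ and $1/3$. Like you, it closes the induction with an $h^{-1/2}$ estimate and $\tau^2h^{-1/2}\le C_{\rm CFL}^2h^{\varepsilon/2}$.

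One point must be corrected: the induction cannot carry $\norm{\ee_h^j}_{\bW^{1,\infty}(\Omega)}\le 1$. Your fallback $\norm{\nabla\ee_h^n}_{\bL^\infty(\Omega)}\lesssim h^{-3/2}(h+\tau^2)$ contains the term $h^{-1/2}\to\infty$, so no CFL condition, however strong, closes it. The paper carries only $\norm{\ee_h^j}_{\bL^\infty(\Omega)}\le 1/12$, which yields $1/2\le|\mhat_h^j|\le 3/2$.

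Your proposed workaround then has to be made precise. In \eqref{eq: discrete projection lipschitz estimate} with $k=1$, the constant depends on the $\bW^{1,\infty}$-norms of both $\uu_h=\mstarhhat^n$ and $\widetilde{\uu}_h=\mhat_h^n$, not merely on $\vv_h=\vstarh^{n-1}\in\bW^{1,\infty}$. So $\norm{\qq_h^n}_{\bH^1(\Omega)}\lesssim\norm{\widehat{\ee}_h^n}_{\bH^1(\Omega)}$ needs a one-sided version of the Lipschitz bound, and that version does hold.
\begin{itemize}
\item Set $\boldsymbol{\eta}\coloneqq\bN(\uu)-\bN(\widetilde{\uu})$, so $|\boldsymbol{\eta}|\le 2$. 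The only term in $\nabla[(\bP(\uu)-\bP(\widetilde{\uu}))\vv]$ that seems to need $\norm{\nabla\widetilde{\uu}}_{\bL^\infty}$ is $\boldsymbol{\eta}\,\partial_j\bN(\widetilde{\uu})^T\vv$.
\item For that term, $|\boldsymbol{\eta}||\partial_j\bN(\widetilde{\uu})|\le|\boldsymbol{\eta}||\partial_j\bN(\uu)|+2|\partial_j\boldsymbol{\eta}|$.
\item From $\partial_j\bN(\uu)=|\uu|^{-1}\bP(\uu)\partial_j\uu$ one gets $|\partial_j\boldsymbol{\eta}|\lesssim|\partial_j(\uu-\widetilde{\uu})|+|\partial_j\uu||\uu-\widetilde{\uu}|$.
\end{itemize}
Hence only $\norm{\mstarhhat^n}_{\bW^{1,\infty}(\Omega)}$ enters. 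Lemma~\ref{lem: stability I_h} needs only the two-sided $\bL^\infty$ bound on $|\mhat_h^n|$. The paper applies Lemma~\ref{lem: q_h^n} with $k=1$ after establishing only the $\bL^\infty$ bound, so it relies on this one-sided form implicitly as well.

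Two smaller fixes:
\begin{itemize}
\item Your claim $\norm{\mhat_\star^{n+1}-\mm_\star^{n+1}}_{\bL^\infty(\Omega)}=\mathcal{O}(\tau^2)$ would need $\partial_t^2\mm\in L^\infty(\bL^\infty)$, which is not assumed. The paper's $\mathcal{O}(\tau)$ bound via $\norm{\partial_t\mm}_{L^\infty(\bL^\infty)}$ suffices.
\item The factor $h^{-1/2}$ comes from the embedding $\bH^1\subset\bL^{2d}$ plus the $\bL^{2d}\to\bL^\infty$ inverse estimate, as in \eqref{eq: sobolev}. The $\bL^2\to\bL^\infty$ estimate you cite costs $h^{-d/2}$, which for $d=3$ would not close.
\end{itemize}
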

\begin{proof}
We firstly emphasize that, in order to ensure the stability result on $\III_h$ from Lemma~\ref{lem: stability I_h} and the stability of the
 normalization from $\bN$ from Lemma~\ref{lem: normalization bounds}, we need to ensure that $|\mhat_h^n(\xx)|$ remains uniformly bounded 
 away from zero for all $x\in \Omega$. To this end, we establish a $\bL^\infty$-bound on the errors $\ee_h^j = \mm_h^j-\mstarh^j$. Overall, our argument is split into five steps. In Step~1, we will state the necessary parameter constraints and the hypothesis. In 
  particular, given $n\leq N$, we will suppose, as an induction hypothesis, the $\bL^\infty$-bound on $\ee_h^j$ for $j=0,\dots,n-1$. In 
  Step~2, we show that these assumptions ensure that $|\mhat_h^j(x)|$ is well-defined and uniformly bounded away from zero for all 
  $x\in\Omega$ and $j=1,\dots,n$. In Step~3, we derive a bound for the quantity $\tau \sum_{j=2}^{n} \normO{\oomega_h^{j-1}}^2$, which is 
  then used in Step~4 to obtain a bound for $\norm{\ee_h^j}_{\bH^1(\Omega)}^2$. Finally, in Step~5, we will prove the claim of the 
  proposition and, with this, that $\norm{\ee_h^j}_{\bL^{\infty}(\Omega)}$ is bounded also for $j=n$, completing the induction step.

\textbf{Step~1 (Choice of $\bar{h}$ and preliminary assumptions on $\norm{\ee_h^j}_{\bL^{\infty}(\Omega)}$).}
Recall the constants $C_0$ from \eqref{eq: assumption m0}, $\const{feischl_tran}$ from \eqref{eq: assumption m1}, $C_{\rm R}$ from \eqref{eq: Ritz_error_1}--\eqref{eq: Ritz_stability}, $M$ from \eqref{eq: regularity last prop} and $C_{\rm CFL}$ from \eqref{eq: CFL}. Let $\newconst{final_const}\coloneqq \max\{3C_{\rm R}M,2M\}$ and note that $\const{final_const}$ depends only on $\gamma$ and $M$. Moreover, recall the following inverse and Sobolev inequalities for $q = 2d$:
\begin{equation}\label{eq: sobolev}
	C_{\rm inv}^{-1}\norm{\psi_h}_{\bL^\infty(\Omega)}\leq h^{-{d/q}}\norm{\psi_h}_{\bL^q(\Omega)}\leq C_{\rm Sob} h^{-1/2}\norm{\psi_h}_{\bH^1(\Omega)}\quad \text{for all}\quad\psi_h\in \boldsymbol{\mathcal{S}}^1(\mathcal{T}_h),
\end{equation}
where the overall constant $C_{\infty}\coloneq C_{\rm inv}C_{\rm Sob}>0$ depends only on $\gamma$ and $|\Omega|$. Recall $h_*>0$ from Lemma~\ref{lem: well-posedness mhat star h}. Given $\rho\coloneq 1/12$, we choose $0<\bar{h}\le h_*$ such that 
\begin{subequations}\label{eq: bar h}
	\begin{align}
	\bar{h}^{1/2} \le \frac{\rho}{C_{\infty}(C_0+C_{\rm R}M)} \label{eq: bar h 1}\\
	\bar{h}^{1/2} + C_{\rm CFL}^2 \bar{h}^{\varepsilon/2} \le \frac{\rho}{C_{\infty}\const{feischl_tran}} \label{eq: bar h 2}\\
	\bar{h} + C_{\rm CFL} \bar{h}^{{(1+\varepsilon)}/{4}} \le \frac{1}{4\newconst{final_const}} \label{eq: bar h 3}
	\end{align}
\end{subequations}
Finally, we suppose that there holds 
\begin{equation}\label{eq: induction claim}
	\norm{\ee_h^j}_{\bL^{\infty}(\Omega)} \le \rho\quad \text{for all}\quad j=0,\dots,n-1,
\end{equation}
but stress that \eqref{eq: induction claim} will indeed later be proven by induction; see Step~6 below. In the following Steps~2--5, let $0< h \leq \bar{h}$ and $\tau>0$ satisfying \eqref{eq: CFL}. 

\textbf{Step~2 (Well definedness of $\boldsymbol{|\mhat_h^j(x)|}$).}
Thanks to the regularity assumption \eqref{eq: regularity last prop}, we estimate for all $j= 2,\ldots,n$ the following quantity:
\begin{align*}
	\norm{|\mhat_h^j|-1}_{\bL^\infty(\Omega)} &= \norm{|\mhat_h^j|-|\mm_\star^j|}_{\bL^\infty(\Omega)}\\
	&\le \norm{\mhat_h^j-\mstarhhat^j}_{\bL^\infty(\Omega)} + \norm{\mstarhhat^j-\mhat_\star^j}_{\bL^\infty(\Omega)} + \norm{\mhat_\star^j-\mm_\star^j}_{\bL^\infty(\Omega)}\\
	&\mkern-22mu\stackrel{\eqref{eq: Ritz_error_2},\eqref{eq: m_hat_star}}{\le} \norm{\widehat{\ee}_h^j}_{\bL^\infty(\Omega)} + C_{\rm R} h\norm{\mhat_\star^j}_{\bW^{2,\infty}(\Omega)} + \tau\norm{\d_t\mm_\star^{j}-\d_t\mm_\star^{j-1}}_{\bL^{\infty}(\Omega)}\\
	&\le \norm{\widehat{\ee}_h^j}_{\bL^\infty(\Omega)} + 3 C_{\rm R} h\norm{\mm}_{L^\infty(\bW^{2,\infty})} + 2\tau\norm{\partial_t\mm}_{L^\infty(\bL^{\infty})}\\
	& = \norm{2\ee_h^{j-1}-\ee_h^{j-2}}_{\bL^\infty(\Omega)} + 3C_{\rm R}h\norm{\mm}_{L^\infty(\bW^{2,\infty})} + 2\tau\norm{\partial_t\mm}_{L^\infty(\bL^{\infty})}\\
	& \mkern-21mu\stackrel{\eqref{eq: induction claim},\eqref{eq: regularity last prop}}{\le} 3\rho +3C_{\rm R}M{h} + 2M{\tau}\stackrel{\eqref{eq: CFL}}{\le} 3 \rho + 3 C_{\rm R}M h + 2 M C_{\rm CFL} h^{(1+\varepsilon)/4}\\
	& \le 3 \rho + 3 C_{\rm R}M \bar{h} + 2 M C_{\rm CFL} \bar{h}^{(1+\varepsilon)/4} \stackrel{\eqref{eq: bar h 3}}{\le} 3\rho + \const{final_const}\frac{1}{4\const{final_const}}= 3 \rho + \frac{1}{4}.
\end{align*}
This, together with the choice of $\rho=1/12$, yields $\norm{|\mhat_h^j|-1}_{\bL^\infty(\Omega)}\le 1/2$ and therefore
\begin{equation}\label{eq: bound m hat (x)}
\frac{1}{2} \le |\mhat_h^j(\xx)| \le \frac{3}{2} \quad \text{for all } \xx\in \Omega \text{ and } j=2,\ldots,n.
\end{equation}
This guarantees that the quantities $\mhat_h^j$ are uniformly bounded away from zero and therefore satisfy the assumptions of all the previous results, such as Lemmas~\ref{lem: bounds r},~\ref{lem: q_h^n}, and Proposition~\ref{prop: bound s_h^n}.

\textbf{Step~3 (Bound on $\boldsymbol{\tau\sum\limits_{j=2}^n\normO{\oomega_h^{j-1}}^2}$).}
Recall $\qq_h^j$ from Lemma~\ref{lem: q_h^n}. Consider $1\le j\le n$ and test the error equation \eqref{eq: error equation} with $\pphi_h=\oomega_h^{j-1}+\qq_h^j\in \bT_h(\mhat_h^n)$ to obtain
\begin{align*}
	\alpha \scalarproductO{\oomega_h^{j-1}}{\oomega_h^{j-1}+\qq_h^j} &+ \scalarproductO{\widehat{\ee}_h^j\times\vv_{\star,h}^{j-1}}{\oomega_h^{j-1}+\qq_h^j} + \scalarproductO{\widehat{\mm}_h^{j}\times \oomega_h^{j-1}}{\oomega_h^{j-1}+\qq_h^j} \\
	&+ \lamex^2\scalarproductO{\nabla\ee_h^j}{\nabla(\oomega_h^{j-1}+\qq_h^j)} = -\scalarproductO{\rr_h^j}{\oomega_h^{j-1}+\qq_h^j}.
\end{align*}
This can be rewritten as 
\begin{equation}\label{eq: equality_1}
	\begin{aligned}
	\alpha \normO{\oomega_h^{j-1}}^2 &+ \lamex^2\scalarproductO{\nabla\ee_h^j}{\nabla\oomega_h^{j-1}} = -\alpha \scalarproductO{\oomega_h^{j-1}}{\qq_h^j} - \scalarproductO{\widehat{\ee}_h^j\times\vv_{\star,h}^{j-1}}{\oomega_h^{j-1}+\qq_h^j}\\
	&- \scalarproductO{\widehat{\mm}_h^{j}\times \oomega_h^{j-1}}{\qq_h^j}-\lamex^2\scalarproductO{\nabla\ee_h^j}{\nabla\qq_h^j} -\scalarproductO{\rr_h^j}{\oomega_h^{j-1}+\qq_h^j}.
	\end{aligned}
\end{equation}
Algebraic computations for the BDF2 time stepping (see, e.g., \cite{Dahlquist, baiocchi_crouzeix, nevanlinna_odeh}, \cite[Section V.6]{hairer}, or \cite[Lemma 9]{part1} for details) show that
\begin{align*}
\scalarproductO{\nabla\ee_h^j}{\nabla\oomega_h^{j-1}} &\stackrel{\eqref{eq: omega_h^n-1}}{=} \frac{1}{4\tau}\normO{\tau^2\nabla\d_t^2\ee_h^{j}}^2+
\frac{1}{\tau}\Big[\frac{1}{4}\normO{\nabla \ee_h^{j-1}}^2 - \scalarproductO{\nabla \ee_h^{j}}{\nabla \ee_h^{j-1}} + \frac{5}{4}\normO{\nabla \ee_h^{j}}^2\Big]\\
&\qquad-\frac{1}{\tau} \Big[ \frac{1}{4}\normO{\nabla \ee_h^{j-2}}^2 - \scalarproductO{\nabla \ee_h^{j-1}}{\nabla \ee_h^{j-2}} + \frac{5}{4}\normO{\nabla \ee_h^{j-1}}^2 \Big] + \scalarproductO{\nabla\ee_h^j}{\nabla\sss_h^j}.
\end{align*}
With the entries $(g_{\mu\nu})_{\mu,\nu=1,2}$ of the matrix 
$G =\dfrac{1}{4} 
\begin{pmatrix}	
	1 & -2 \\
	-2 & 5	
\end{pmatrix},$
this implies that
\begin{align}\label{eq: inequality_2}
	\scalarproductO{\nabla\ee_h^j}{\nabla\oomega_h^{j-1}} &\ge
	\frac{1}{\tau}\Big[g_{11}\normO{\nabla \ee_h^{j-1}}^2 + 2g_{12}\scalarproductO{\nabla \ee_h^{j}}{\nabla \ee_h^{j-1}} + g_{22}\normO{\nabla \ee_h^{j}}^2\Big]\\
	&\mkern-50mu-\frac{1}{\tau} \Big[ g_{11}\normO{\nabla \ee_h^{j-2}}^2 + 2g_{12}\scalarproductO{\nabla \ee_h^{j-1}}{\nabla \ee_h^{j-2}} + g_{22}\normO{\nabla \ee_h^{j-1}}^2 \Big] + \scalarproductO{\nabla\ee_h^j}{\nabla\sss_h^j}.\nonumber
\end{align}
Combining \eqref{eq: equality_1} with \eqref{eq: inequality_2} and using the Cauchy-Schwarz inequality together with the uniform bounds \eqref{eq: bounds C_X C_Y} on $\norm{\vstarh^{j-1}}_{\bL^\infty(\Omega)}$ from Lemma~\ref{lem: various v estimates} and on $\norm{\mhat_h^j}_{\bL^\infty(\Omega)}$ guaranteed by~\eqref{eq: bound m hat (x)}, we derive that
\begin{align*}
	\alpha \normO{\oomega_h^{j-1}}^2 &+ 
	\frac{\lamex^2}{\tau}\Big[g_{11}\normO{\nabla \ee_h^{j-1}}^2 + 2g_{12}\scalarproductO{\nabla \ee_h^{j}}{\nabla \ee_h^{j-1}} + g_{22}\normO{\nabla \ee_h^{j}}^2\Big]\\
	&\qquad-\frac{\lamex^2}{\tau} \Big[g_{11}\normO{\nabla \ee_h^{j-2}}^2 + 2g_{12}\scalarproductO{\nabla \ee_h^{j-1}}{\nabla \ee_h^{j-2}} + g_{22}\normO{\nabla \ee_h^{j-1}}^2 \Big]\\
	&\mkern-50mu\le -\alpha\scalarproductO{\oomega_h^{j-1}}{\qq_h^j} - \scalarproductO{\widehat{\ee}_h^j\times\vv_{\star,h}^{j-1}}{\oomega_h^{j-1}+\qq_h^j} - \scalarproductO{\widehat{\mm}_h^{j}\times \oomega_h^{j-1}}{\qq_h^j} \\
	&\qquad-\lamex^2\scalarproductO{\nabla\ee_h^j}{\nabla\qq_h^j} -\scalarproductO{\rr_h^j}{\oomega_h^{j-1}+\qq_h^j}-\lamex^2\scalarproductO{\nabla\ee_h^j}{\nabla\sss_h^j}\\
	& \mkern-50mu\leq \alpha\normO{\oomega_h^{j-1}}\normO{\qq_h^j} + \norm{\vstarh^{j-1}}_{\bL^\infty(\Omega)}\normO{\widehat{\ee}_h^j}(\normO{\oomega_h^{j-1}}+\normO{\qq_h^j})+ \norm{\mhat_h^j}_{\bL^\infty(\Omega)}\normO{\oomega_h^{j-1}}\normO{\qq_h^j} \\
	&\qquad + \normO{\nabla\ee_h^j}\lamex^2(\normO{\nabla\sss_h^j}+\normO{\nabla\qq_h^j}) + \normO{\boldsymbol{r}_h^j}(\normO{\oomega_h^{j-1}}+\normO{\qq_h^j})\\
	&\mkern-68mu\stackrel{\eqref{eq: bounds C_X C_Y},\eqref{eq: bound m hat (x)}}{\lesssim} \alpha\normO{\oomega_h^{j-1}}\normO{\qq_h^j} + \normO{\widehat{\ee}_h^j}(\normO{\oomega_h^{j-1}}+\normO{\qq_h^j})+\normO{\oomega_h^{j-1}}\normO{\qq_h^j} \\
	& \qquad+ \normO{\nabla\ee_h^j}(\normO{\nabla\sss_h^j}+\normO{\nabla\qq_h^j})+ \normO{\boldsymbol{r}_h^j}(\normO{\oomega_h^{j-1}}+\normO{\qq_h^j}),
\end{align*}
where the hidden constant depends only on $\gamma$, $\lamex$, $M$, $|\Omega|$, and $\bar{h}$. The definition of $\mhat_h^j$ and $\widehat{\ee}_h^j$ guarantee that, for $k\in\{0,1\}$,
\begin{align*}
\norm{\widehat{\ee}_h^j}_{\bH^k(\Omega)} &=\norm{2\ee_h^{j-1} - \ee_h^{j-2}}_{\bH^k(\Omega)} \lesssim \norm{\ee_h^{j-1}}_{\bH^k(\Omega)} + \norm{\ee_h^{j-2}}_{\bH^k(\Omega)}.
\end{align*}
This last bound, together with Young's inequality on the product terms, absorption into the term $\normO{\oomega_h^{j-1}}$ and the bounds on $\norm{\qq_h^j}_{\bH^1(\Omega)}$ from \eqref{eq: bound q_h^n} and $\normO{\rr_h^j}$ from \eqref{eq: bound r_h^n} allow us to estimate further as
\begin{align*}
	\frac{\alpha}{2} \normO{\oomega_h^{j-1}}^2 &+ \frac{\lamex^2}{\tau}\Big[g_{11}\normO{\nabla \ee_h^{j-1}}^2 + 2g_{12}\scalarproductO{\nabla \ee_h^{j}}{\nabla \ee_h^{j-1}} + g_{22}\normO{\nabla \ee_h^{j}}^2\Big] \\
	& \qquad - \frac{\lamex^2}{\tau} \Big[g_{11}\normO{\nabla \ee_h^{j-2}}^2 + 2g_{12}\scalarproductO{\nabla \ee_h^{j-1}}{\nabla \ee_h^{j-2}} + g_{22}\normO{\nabla \ee_h^{j-1}}^2 \Big] \\
	& \mkern-30mu\lesssim \norm{\ee_h^j}_{\bH^1(\Omega)}^2 + \norm{\ee_h^{j-1}}_{\bH^1(\Omega)}^2 + \norm{\ee_h^{j-2}}_{\bH^1(\Omega)}^2 + \normO{\nabla\sss_h^j}^2 + \norm{\qq_h^j}_{\bH^1(\Omega)}^2 + \normO{\boldsymbol{r}_h^j}^2\\
	& \mkern-48mu\stackrel{\eqref{eq: bound r_h^n},\eqref{eq: bound q_h^n}}{\lesssim} \sum_{k=0}^{2} \norm{\ee_h^{j-k}}_{\bH^1(\Omega)}^2 + \normO{\nabla\sss_h^j}^2 + \normO{\boldsymbol{d}_h^j}^2.
\end{align*}
Overall, the hidden constants depend only on $\gamma$, $\alpha$, $\lamex$, $M$, $|\Omega|$, and $\bar{h}$.
Multiplying by $\tau$ and summing over $j=2,\dots, n\le N$, we obtain
\begin{align}
	\frac{\alpha}{2} \tau\sum_{j=2}^{n} \normO{\oomega_h^{j-1}}^2 &+ \lamex^2\Big[g_{11}\normO{\nabla \ee_h^{n-1}}^2 + 2g_{12}\scalarproductO{\nabla \ee_h^{n}}{\nabla \ee_h^{n-1}} + g_{22}\normO{\nabla \ee_h^{n}}^2\Big]\nonumber\\
	& \mkern-30mu\lesssim \lamex^2\Big[g_{11}\normO{\nabla \ee_h^{0}}^2 + 2g_{12}\scalarproductO{\nabla \ee_h^{1}}{\nabla \ee_h^{0}} + g_{22}\normO{\nabla \ee_h^{1}}^2\Big] \label{eq: intermediate last proof}\\
	&+\tau (\norm{\ee_h^0}_{\bH^1(\Omega)}^2+\norm{\ee_h^1}_{\bH^1(\Omega)}^2)+ \tau \sum_{j=2}^{n} \norm{\ee_h^j}_{\bH^1(\Omega)}^2 + \tau \sum_{j=2}^{n} \big(\normO{\nabla\sss_h^j}^2 +\normO{\boldsymbol{d}_h^j}^2\big).\nonumber
\end{align}
Let $\mu^-$, $\mu^+>0$ be the minimum and maximum eigenvalue of the symmetric matrix $G$. With $\mu^-|\xx|^2 \le G \xx\cdot\xx\le \mu^+|\xx|^2$ for every $\xx\in\R^2$, there holds
\begin{align*}
	g_{11}\normO{\nabla \ee_h^{n-1}}^2+2 g_{12}\scalarproductO{\nabla \ee_h^n}{\nabla \ee_h^{n-1}}+g_{22}\normO{\nabla \ee_h^n}^2 &\geq \mu^{-}\big(\normO{\nabla \ee_h^n}^2+\normO{\nabla \ee_h^{n-1}}^2\big)\geq \mu^-\normO{\nabla\ee_h^n}^2\\
\intertext{and}
	g_{11}\normO{\nabla \ee_h^0}^2+2 g_{12}\scalarproductO{\nabla \ee_h^0}{\nabla \ee_h^1}+g_{22}\normO{\nabla \ee_h^1}^2 &\leq \mu^{+}\big(\normO{\nabla \ee_h^0}^2+\normO{\nabla \ee_h^1}^2\big).
\end{align*}
Considering these relations, \eqref{eq: intermediate last proof} leads to
\begin{equation}\label{eq: error estimate}
	\begin{aligned}
		\tau \sum_{j=2}^{n} \normO{\oomega_h^{j-1}}^2 &+ \normO{\nabla\ee_h^n}^2\lesssim \normO{\nabla\ee_h^0}^2 + \normO{\nabla\ee_h^1}^2+\norm{\ee_h^0}_{\bH^1(\Omega)}^2+\norm{\ee_h^1}_{\bH^1(\Omega)}^2\\
		&\mkern+120mu+ \tau \sum_{j=2}^{n} \norm{\ee_h^j}_{\bH^1(\Omega)}^2 + \tau \sum_{j=2}^{n} \big(\normO{\nabla\sss_h^j}^2 +\normO{\boldsymbol{d}_h^j}^2\big)\\
		&\mkern-75mu\lesssim \norm{\ee_h^0}_{\bH^1(\Omega)}^2+\norm{\ee_h^1}_{\bH^1(\Omega)}^2+\tau \sum_{j=2}^{n} \norm{\ee_h^j}_{\bH^1(\Omega)}^2 +\tau \sum_{j=2}^{n} \big(\normO{\nabla\sss_h^j}^2 +\normO{\boldsymbol{d}_h^j}^2\big),
	\end{aligned}
\end{equation}
where the hidden constants depend only on $\gamma$, $\alpha$, $\lamex$, $M$, $|\Omega|$, $\mu^\pm$, $T$, and $h\le \bar{h}$. 

\textbf{Step~4 (Bound on $\boldsymbol{\norm{\ee_h^n}^2_{\bH^1(\Omega)}}$).}
From the left-hand side of \eqref{eq: error estimate}, we observe that it remains to relate the term $\normO{\ee_h^n}^2$ in terms of the quantity $\tau\sum_{j=2}^{n}\normO{\oomega_h^{j-1}}^2$, so as to recover the full $\bH^1(\Omega)$-norm of the error on the left-hand side.

Recall the identity 
\begin{equation}\label{eq: omega_h^n-1 rewritten}
	\oomega_h^{n-1} - \sss_h^n\stackrel{\eqref{eq: omega_h^n-1}}{=}\frac{1}{\tau}\Big(\frac{3}{2}\ee_h^n-2\ee_h^{n-1}+\frac{1}{2} \ee_h^{n-2}\Big)\eqcolon \frac{1}{\tau}\Big(\delta_0\ee_h^n+\delta_1\ee_h^{n-1}+\delta_2 \ee_h^{n-2}\Big)=\frac{1}{\tau}\sum_{j=0}^2 \delta_j \ee_h^{n-j}
\end{equation}
and consider the equality
\begin{equation}\label{eq: omega_h^n-1 rewritten 2}
\frac{1}{\tau} \sum_{j=2}^n \delta_{n-j} \ee_h^j=\oomega_h^{n-1}-\sss_h^n-\bg_h^n, \quad \text{ with } n \ge 2 \text{ and } \delta_{\ell}=0 \text{ for }\ell>2,
\end{equation}
where $\boldsymbol{g}_h^n:=\dfrac{1}{\tau} (\delta_{n} \ee_h^0 + \delta_{n-1}\ee_h^1)$ depends only on the starting errors and satisfies $\bg_h^n=0$ for $n \ge 4$. For $n=2$, the equality \eqref{eq: omega_h^n-1 rewritten 2} reads 
$$
\frac{1}{\tau} \delta_0 \ee_h^2=\oomega_h^{1}-\sss_h^2-\bg_h^2, \quad\text{ with }\quad \bg_h^2 = \frac{1}{\tau}(\delta_2 \ee_h^0 + \delta_1 \ee_h^1).
$$
For $n=3$, the equality \eqref{eq: omega_h^n-1 rewritten 2} reads
$$
\frac{1}{\tau} \big(\delta_1 \ee_h^2 + \delta_0 \ee_h^3\big)=\oomega_h^{2}-\sss_h^3-\bg_h^3, \quad\text{ with }\quad \bg_h^3 = \frac{1}{\tau} \delta_2 \ee_h^1,
$$
while the equality \eqref{eq: omega_h^n-1 rewritten 2} coincides with \eqref{eq: omega_h^n-1 rewritten} for $n\ge 4$.

To express $\ee_h^n$ in terms of $\oomega_h^{j-1}$, $\sss_h^j$ and $\bg_h^j$, it remains to invert the discrete convolutional equation \eqref{eq: omega_h^n-1 rewritten 2}. To this end, we multiply both sides by $\zeta^{n}$ and sum over $n\ge 2$.
Notice that since $\delta_{n-j}=0$ for $n-j>2$, we have
$$
\frac{1}{\tau}\sum_{n\ge 2}\sum_{j=2}^n \delta_{n-j} \ee_h^j \zeta^n = \frac{1}{\tau}\sum_{j \ge 2} \sum_{n\ge j} \delta_{n-j} \ee_h^j \zeta^{n} = \frac{1}{\tau}\sum_{j \ge 2} \sum_{k=0}^2 \ee_h^j\delta_{k} \zeta^{k+j} = \frac{1}{\tau}\delta(\zeta)\sum_{j \ge 2} \ee_h^j \zeta^j,
$$
where
$$\delta(\zeta):=\sum_{k=0}^2 \delta_k \zeta^k = \frac{3}{2}-2\zeta+\frac{1}{2}\zeta^2 = \frac{(\zeta-3)(\zeta-1)}{2}.$$
Thus, we obtain
\begin{equation}\label{eq: convolution in zeta}
\frac{1}{\tau}\delta(\zeta)\sum_{m \ge 2} \ee_h^m \zeta^m = \sum_{m\ge 2} \big(\oomega_h^{m-1}-\sss_h^m-\bg_h^m\big) \zeta^m.
\end{equation}
We now consider the power series of $1/\delta(\zeta)$, which can be expanded for $|\zeta|<1$ using the geometric series expansion as
$$
\kappa(\zeta)=\sum_{j=0}^{\infty} \kappa_j \zeta^j:=\frac{1}{\delta(\zeta)}=\frac{2}{(\zeta-3)(\zeta-1)} =  \frac{1}{1-\zeta}-\frac{1}{3}\cdot \frac{1}{1-\zeta/3} = \sum_{j=0}^\infty \Big(1-\frac{1}{3^{j+1}}\Big) \zeta^j.
$$
In particular, it holds that $0<\kappa_j = 1-\dfrac{1}{3^{j+1}}<1$ for all $j\ge 0$. Therefore, \eqref{eq: convolution in zeta} yields
$$
\sum_{m \ge 2} \ee_h^m \zeta^m = \tau \kappa(\zeta) \sum_{m\ge 2} \big(\oomega_h^{m-1}-\sss_h^m-\bg_h^m\big) \zeta^m = \tau \sum_{j=0}^\infty\sum_{m\ge 2}  \kappa_j \big(\oomega_h^{m-1}-\sss_h^m-\bg_h^m\big) \zeta^{m+j}.
$$
Extracting the $n$-th coefficient on both sides, noticing $m = n-j$ and $j\le n-2$, we finally obtain, for $n \ge 2$,
$$
\boldsymbol{e}_h^n=\tau \sum_{j=0}^{n-2}\kappa_j\left(\oomega_h^{n-1-j}-\boldsymbol{s}_h^{n-j}-\boldsymbol{g}_h^{n-j}\right)=\tau\sum_{j=2}^n \kappa_{n-j}\left(\oomega_h^{j-1}-\boldsymbol{s}_h^j-\boldsymbol{g}_h^j\right).
$$
Therefore, we can estimate the error by
\begin{equation}\label{eq: estimate e_h^n}
\begin{aligned}
\normO{\boldsymbol{e}_h^n}^2&\le 2\tau^2 \normO[\Big]{\sum_{j=2}^n \kappa_{n-j}(\oomega_h^{j-1}-\boldsymbol{s}_h^j)}^2+2\tau^2\normO[\Big]{\sum_{j=2}^3 \kappa_{n-j}\boldsymbol{g}_h^j}^2\\
&\le 2n\tau^2  \sum_{j=2}^n\normO{\oomega_h^{j-1} - \sss_h^j}^2 + 4\tau^2 \sum_{j=2}^3\normO{\boldsymbol{g}_h^j}^2\\
&\lesssim\tau \sum_{j=2}^n \normO{\oomega_h^{j-1}}^2 + \tau \sum_{j=2}^n \normO{\sss_h^j}^2 + \normO{\ee_h^0}^2 + \normO{\ee_h^1}^2,
\end{aligned}
\end{equation}
where the hidden constant depends only on $T$. Combining \eqref{eq: estimate e_h^n} with \eqref{eq: error estimate}, we derive
\begin{equation*}
	\begin{split}
		\norm{\ee_h^n}_{\bH^1(\Omega)}^2 &= \normO{\ee_h^n}^2+\normO{\nabla\ee_h^n}^2\stackrel{\eqref{eq: estimate e_h^n}}{\lesssim} \tau \sum_{j=2}^n \normO{\oomega_h^{j-1}}^2 + \tau \sum_{j=2}^n \normO{\sss_h^j}^2 + \normO{\ee_h^0}^2 + \normO{\ee_h^1}^2 + \normO{\nabla\ee_h^n}^2\\
		& \mkern-30mu\stackrel{\eqref{eq: error estimate}}{\lesssim} \tau \sum_{j=2}^{n} \norm{\ee_h^j}^2_{\bH^1(\Omega)} +\tau \sum_{j=2}^{n} \big(\norm{\sss_h^j}_{\bH^1(\Omega)}^2 + \normO{\boldsymbol{d}_h^j}^2\big)+(\norm{\ee_h^0}_{\bH^1(\Omega)}^2+\norm{\ee_h^1}_{\bH^1(\Omega)}^2),
	\end{split}
\end{equation*}
i.e., 
\begin{align*}
	\norm{\ee_h^n}_{\bH^1(\Omega)}^2\le  \newconst{const_final_proof}\Big[\tau \sum_{j=2}^{n} \norm{\ee_h^j}^2_{\bH^1(\Omega)} +\tau \sum_{j=2}^{n} \big(\norm{\sss_h^j}_{\bH^1(\Omega)}^2 + \normO{\boldsymbol{d}_h^j}^2\big)+(\norm{\ee_h^0}_{\bH^1(\Omega)}^2+\norm{\ee_h^1}_{\bH^1(\Omega)}^2)\Big],
\end{align*}
where $\const{const_final_proof}>0$ depends only on $\gamma$, $\alpha$, $\lamex$, $M$, $|\Omega|$, $\mu^\pm$, $T$, and $\bar{h}$.

\textbf{Step~5 (Choice of $\bar{\tau}$ and proof of (\ref{eq: stability full discretization}) conditional to assumption (\ref{eq: induction claim})).}
Let us now fix $\bar{\tau}>0$ such that $\bar{\tau}\const{const_final_proof}\leq1/2$. Then, we are led to
\begin{equation*}
	\norm{\ee_h^n}_{\bH^1(\Omega)}^2 {\le}2\const{const_final_proof}\Big[ \tau \sum_{j=2}^{n-1} \norm{\ee_h^j}^2_{\bH^1(\Omega)} +\tau \sum_{j=2}^{n} \big(\norm{\sss_h^j}_{\bH^1(\Omega)}^2 + \normO{\boldsymbol{d}_h^j}^2\big)+(\norm{\ee_h^0}_{\bH^1(\Omega)}^2+\norm{\ee_h^1}_{\bH^1(\Omega)}^2)\Big].
\end{equation*}
The discrete Gronwall inequality (see Lemma~\ref{lem: gronwall} in Appendix~\ref{section: appendix}) applied with
\begin{equation}\label{eq: def a_n}
a_n \coloneqq 2 \const{const_final_proof}\Big[ \tau \sum_{j=2}^{n} \big(\norm{\sss_h^j}_{\bH^1(\Omega)}^2 + \normO{\boldsymbol{d}_h^j}^2\big)+(\norm{\ee_h^0}_{\bH^1(\Omega)}^2+\norm{\ee_h^1}_{\bH^1(\Omega)}^2)\Big], 
\end{equation}
 $b_n \coloneqq 2\const{const_final_proof}\tau$ and $w_n = \norm{\ee_h^n}_{\bH^1(\Omega)}^2$ for $n\ge 2$ as well as $w_0\coloneqq 0 \eqqcolon w_1$
implies that 
	\begin{align*}
	\norm{\ee_h^n}_{\bH^1(\Omega)}^2 \le a_n \exp{(2\const{const_final_proof} n \tau)} \le a_n \exp{(2 \const{const_final_proof}\,T\,)} \quad\text{for every}\quad n=2,\ldots,N.
	\end{align*}
The constant $\const{const_final_proof}$ depends only on $\gamma$, $\alpha$, $\lamex$, $M$, $|\Omega|$, $\mu^\pm$, $T$, as well as on $\bar{h}$ and $\bar{\tau}$
and is therefore independent of $n$, $h$, and $\tau$. This proves \eqref{eq: stability full discretization} with $\const{C_stab}\coloneqq 2\const{const_final_proof}\exp{(2 \const{const_final_proof}\,T\,)}$ provided that \eqref{eq: induction claim} holds for all $j=0,\dots,n-1$.

\textbf{Step~6 (Proof of assumption (\ref{eq: induction claim}) by induction).}
We suppose that $h\leq \bar{h}$ and $\tau \leq \bar{\tau}$.
To show that the initial values for $n\in\{0,1\}$ satisfy \eqref{eq: induction claim}, notice that
\begin{equation}\label{eq: bound e0 H1}
\begin{aligned}
\norm{\ee_h^0}_{\bH^1(\Omega)} & \stackrel{\eqref{eq: e_h^n}}{\le} \norm{\mm_h^0-\mm_\star^0}_{\bH^1(\Omega)} + \norm{\mm_\star^0 - \mstarh^0}_{\bH^1(\Omega)}\\
&\mkern-7mu\stackrel{\eqref{eq: assumption m0},\eqref{eq: Ritz_error_1}}{\le} C_0 h + C_{\rm R} h \norm{\mm_\star^0}_{\bH^2(\Omega)}\stackrel{\eqref{eq: regularity last prop}}{\le} (C_0+C_{\rm R}M)h,
\end{aligned}
\end{equation}
which implies
$$
\norm{\ee_h^0}_{\bL^{\infty}(\Omega)}\stackrel{\eqref{eq: sobolev}}{\le}C_{\infty}h^{-1/2}\norm{\ee_h^0}_{\bH^1(\Omega)} \stackrel{\eqref{eq: bound e0 H1}}{\le} C_{\infty} (C_0+C_{\rm R}M)\bar{h}^{1/2}\stackrel{\eqref{eq: bar h 1}}{\le} \rho.
$$
Analogously, the error after the first step satisfies
\begin{equation}\label{eq: bound e1 H1}
	\norm{\ee_h^1}_{\bH^1(\Omega)}\stackrel{\eqref{eq: assumption m1}}{\le} \const{feischl_tran}(h+\tau^2)\,{\le} \,\const{feischl_tran}(h+C_{\rm CFL}^2 h^{(1+\varepsilon)/2}),
\end{equation}
which implies
\begin{equation*}
\norm{\ee_h^1}_{\bL^\infty(\Omega)}\stackrel{\eqref{eq: sobolev}}{\le}C_{\infty}h^{-1/2}\norm{\ee_h^1}_{\bH^1(\Omega)} \le C_{\infty}\const{feischl_tran} (\bar{h}^{1/2}+C_{\rm CFL}^2\bar{h}^{\varepsilon/2})\stackrel{\eqref{eq: bar h 2}}{\le} \rho.
\end{equation*}
According to the induction principle, we may thus assume \eqref{eq: induction claim} for $j=0,\dots,n-1$ and must show \eqref{eq: induction claim} for $j=n$. 
To this end, recall $\normO{\dd_h^j}\le\const{consistency} (h+\tau^2)$ from Proposition~\ref{prop: consistency error}. This guarantees 
$$
\tau\sum_{j=2}^n\normO{\dd_h^j}^2\stackrel{\eqref{eq: bound d_h^n}}{\le}\const{consistency}^2\tau(n-1)(h+\tau^2)^2\le\const{consistency}^2\tau(n-1)(h+\tau^2)^2\stackrel{\eqref{eq: CFL}}{<} \const{consistency}^2T\big[h+C_{\rm CFL}^2 h^{(1+\varepsilon)/2}\big]^2.
$$ 
Similarly, $\normO{\sss_h^j}\le \const{s} (h+\tau^2)$ from Proposition~\ref{prop: bound s_h^n} yields that
$$
\tau\sum_{j=2}^n\norm{\sss_h^j}^2_{\bH^1(\Omega)}\stackrel{\eqref{eq: bound s_h^n}}{\le}\const{s}^2\tau(n-1)(h+\tau^2)^2\le\const{s}^2\tau (n-1)(h+\tau^2)^2\stackrel{\eqref{eq: CFL}}{<}\const{s}^2T\big[h+C_{\rm CFL}^2 h^{(1+\varepsilon)/2}\big]^2. 
$$
Plugging these last two estimates with \eqref{eq: bound e0 H1}--\eqref{eq: bound e1 H1} into \eqref{eq: def a_n}, we see that 
\begin{equation}\label{eq: bound a_n}
	\begin{aligned}
		a_{n}& \le 2 C_{19}\Big[(\const{s}^2T + \const{consistency}^2T + C_1^2 )(h+C_{\rm CFL}^2 h^{(1+\varepsilon)/2})^2 + (C_0+C_{\rm R}M)^2 h^2\Big]\\
&\le 2 C_{19}\Big[T(\const{s}^2 + \const{consistency}^2) + C_1^2 + (C_0+C_{\rm R}M)^2\Big] h g(\bar{h})^2,
	\end{aligned}
\end{equation}
where $g({h}) \coloneqq {h}^{1/2} + C_{\rm CFL}^2 {h}^{\varepsilon/2}$.
This implies 
\begin{equation*}
	\begin{aligned}
		\norm{\ee_h^n}_{\bL^{\infty}(\Omega)}^2 &\stackrel{\eqref{eq: sobolev}}{\le} C_{\infty}^2 h^{-1} \norm{\ee_h^n}_{\bH^1(\Omega)}^2 \\
		&\stackrel{\eqref{eq: bound a_n}}{\le} 2C_{\infty}^2 C_{19}\Big[T(\const{s}^2 + \const{consistency}^2) + C_1^2 + (C_0+C_{\rm R}M)^2\Big] \exp{(2 \const{const_final_proof}\,T\,)}g(\bar{h})^2 .
	\end{aligned}
\end{equation*}
By further reducing $\bar{h}$, we thus guarantee $\norm{\ee_h^n}_{\bL^{\infty}(\Omega)} \le \rho$ and prove \eqref{eq: induction claim} for $j=n$. This completes the proof of the proposition.
\end{proof}
\subsection{Proof of Theorem~\ref{thm: main result}.}
We can finally combine the previous propositions to prove our main result: Stability \eqref{eq: stability full discretization} from Proposition~\ref{prop: stability full discretization} yields 
\begin{align*}
	\norm{\mm_h^n-\mstarh^n}^2_{\bH^1(\Omega)}\stackrel{\eqref{eq: e_h^n}}{=}\norm{\ee_h^n}_{\bH^1(\Omega)}^2 &\lesssim h^2+\tau^4.
\end{align*}
Since $	\mm_\star^n-\mm_h^n = \mm_\star^n - \mstarh^n - (\mm_h^n - \mstarh^n) = (\mathbf{I}-\bR_h)\mm_\star^n-\ee_h^n,$
it follows by triangle inequality and the bound $\norm{(\mathbf{I}-\bR_h)\mm_\star^n}_{\bH^1(\Omega)}\le C_{\rm R} h$ that 
\begin{equation}\label{eq: almost final}
\norm{\mm(t_n)-\mm_h^n}_{\bH^1(\Omega)} \le C_{\rm conv}(h + \tau^2),
\end{equation}
where the constant $C_{\rm conv}>0$ depends only on $\gamma$, $\alpha$, $\lamex$, $M$, $|\Omega|$, $\mu^\pm$, $T$, $C_0$, $C_{\ff}$, $C_{\rm CFL}$ as well as on $\bar{h}$ and $\bar{\tau}$.\qed

\subsection{Proof of Corollary~\ref{cor: optimal order error estimate interpoland}.}
To prove \eqref{eq: optimal order error interpoland}, we define the piecewise linear time-interpoland of the exact solution 
$$
\boldsymbol{\mathcal{I}}_\tau\mm(t) \coloneqq \frac{t-t_n}{\tau}\mm_\star^{n+1}+\frac{t_{n+1}-t}{\tau}\mm_\star^n \quad \text{for every } t\in[t_n, t_{n+1}].
$$ 
Thanks to the triangular inequality, it holds that
\begin{align*}
	\norm{\mm(t)-\mm_{h\tau}(t)}_{\bH^1(\Omega)} &\le \norm{\mm(t)-\boldsymbol{\mathcal{I}}_\tau\mm(t)}_{\bH^1(\Omega)} + \norm{\boldsymbol{\mathcal{I}}_\tau\mm(t)-\mm_{h\tau}(t)}_{\bH^1(\Omega)} = I_1 + I_2.
\end{align*}
Thanks to a Taylor expansion of $\mm$ around $t\in[t_n, t_{n+1}]$, we have
\begin{align*}
	\mm_\star^n &= \mm(t) + (t_n-t)\partial_t\mm(t) + \int_{t}^{t_n} (t_n-s)\partial_{tt}\mm(s) \, \d s
\intertext{and}
	\mm_\star^{n+1} &= \mm(t) + (t_{n+1}-t)\partial_t\mm(t) + \int_{t}^{t_{n+1}} (t_{n+1}-s)\partial_{tt}\mm(s) \, \d s.
\end{align*}
Thanks to straightforward algebraic manipulations, this implies that
\begin{equation*}
\begin{aligned}
	I_1 &= \norm[\Big]{\frac{t-t_n}{\tau}\int_{t}^{t_{n+1}} (t_{n+1}-s)\partial_{tt}\mm(s) \, \d s + \frac{t_{n+1}-t}{\tau}\int_{t}^{t_n} (t_n-s)\partial_{tt}\mm(s) \, \d s}_{\bH^1(\Omega)} \\
	&\le \Big|\frac{t-t_n}{\tau}\int_t^{t_{n+1}} (t_{n+1}-s)\, \d s + \frac{t_{n+1}-t}{\tau}\int_{t_n}^{t} (s-t_n) \, \d s\Big|\norm{\partial_{tt}\mm}_{L^{\infty}(0,T; \bH^1(\Omega))}\\
	& = \frac{(t_{n+1}-t)(t-t_n)}{2\tau}(t_{n+1}-t_n)\norm{\partial_{tt}\mm}_{L^{\infty}(0,T; \bH^1(\Omega))} \le \frac{\tau^2}{8}\norm{\partial_{tt}\mm}_{L^{\infty}(0,T; \bH^1(\Omega))}.
\end{aligned}
\end{equation*}
Moreover, it holds that 
\begin{equation*}
	\begin{aligned}
		I_2 & = \norm[\Big]{\frac{t-t_n}{\tau}(\mm_\star^{n+1}-\mm_h^{n+1})+\frac{t_{n+1}-t}{\tau}(\mm_\star^n-\mm_h^n)}_{\bH^1(\Omega)} \\
		&\stackrel{\eqref{eq: almost final}}{\le}C_{\rm conv}\Big[\frac{t-t_n}{\tau}+\frac{t_{n+1}-t}{\tau} \Big](h+\tau^2) = C_{\rm conv}(h+\tau^2).
	\end{aligned}
\end{equation*}
Altogether, we have shown that
\begin{align*}
	\norm{\mm-\mm_{h\tau}}_{L^{\infty}(0,T; \bH^1(\Omega))} & = \sup_{t\in[0,T]} \norm{\mm(t)-\mm_{h\tau}(t)}_{\bH^1(\Omega)} \le \frac{\tau^2}{8}M + C_{\rm conv}(h+\tau^2),
\end{align*} which proves \eqref{eq: optimal order error estimate} with $C_{\rm conv}' \coloneqq C_{\rm conv}+ M/8$ and concludes the proof of Corollary~\ref{cor: optimal order error estimate interpoland}. \qed
\section{Numerical experiments}\label{section:numerics}
\input{llg_bdf2_a_priori_estimates_numerics.tex}

\appendix
\section{}\label{section: appendix}
\begin{proof}[{\bfseries Proof of Lemma~\ref{lem: normalization bounds}}]
	The proof is split into two steps. 

	\textbf{Step~1:}
	To prove \eqref{eq: normalization bounds 0}, note that
	$$
		\norm{\bN(\boldsymbol{u})}_{\bL^\infty(\Omega)} = \norm[\Big]{\frac{\boldsymbol{u}}{|\boldsymbol{u}|}}_{\bL^\infty(\Omega)} \le \frac{1}{c} \norm{\boldsymbol{u}}_{\bL^\infty(\Omega)}.
	$$
	Moreover, recall that $\partial_j |\boldsymbol{u}| = \dfrac{\partial_j \boldsymbol{u}\cdot \boldsymbol{u}}{|\boldsymbol{u}|},$
	and therefore
	\begin{equation}\label{eq:lem_4_I}
		\partial_j\bN(\boldsymbol{u})=\partial_j\Big(\frac{\boldsymbol{u}}{|\boldsymbol{u}|}\Big) = \frac{\partial_j \boldsymbol{u} |\boldsymbol{u}| - \boldsymbol{u} \partial_j |\boldsymbol{u}|}{|\boldsymbol{u}|^2} = \frac{\partial_j \boldsymbol{u}}{|\boldsymbol{u}|} - \frac{\boldsymbol{u} \,(\partial_j \boldsymbol{u} \cdot \boldsymbol{u})}{|\boldsymbol{u}|^3}.
	\end{equation}
	This proves \eqref{eq: normalization bounds i}, since
	\begin{align*}
		\left|\partial_j\Big(\frac{\boldsymbol{u}}{|\boldsymbol{u}|}\Big)\right| &\le \left|\frac{\partial_j \boldsymbol{u}}{|\boldsymbol{u}|} \right|+ \left|\frac{\boldsymbol{u}(\partial_j \boldsymbol{u}\cdot \boldsymbol{u})}{|\boldsymbol{u}|^3} \right| \leq  \frac{2}{c} |\partial_j \boldsymbol{u}|.
	\end{align*}

	To prove \eqref{eq: normalization bounds ii}, notice that
	\begin{align*}
		\partial_i\partial_j\Big(\frac{\boldsymbol{u}}{|\boldsymbol{u}|}\Big) & \stackrel{\eqref{eq:lem_4_I}}{=} \frac{\partial_i\partial_j \boldsymbol{u}}{|\boldsymbol{u}|} -\frac{\partial_j \boldsymbol{u} \cdot \partial_i |\boldsymbol{u}|}{|\boldsymbol{u}|^2}-\frac{\partial_i(\boldsymbol{u}(\partial_j\boldsymbol{u} \cdot \boldsymbol{u}))}{|\boldsymbol{u}|^3} - \boldsymbol{u}(\partial_j \boldsymbol{u}\cdot \boldsymbol{u})\partial_i\frac{1}{|\boldsymbol{u}|^3}\\		
		&\mkern-50mu=\frac{\partial_i\partial_j \boldsymbol{u}}{|\boldsymbol{u}|} -\frac{\partial_j \boldsymbol{u}(\partial_i \boldsymbol{u} \cdot \boldsymbol{u})}{|\boldsymbol{u}|^3}- \frac{\partial_i \boldsymbol{u}\,(\partial_j \boldsymbol{u}\cdot \boldsymbol{u})}{|\boldsymbol{u}|^3}-\boldsymbol{u}\frac{\partial_i\partial_j \boldsymbol{u} \cdot \boldsymbol{u}}{|\boldsymbol{u}|^3}- \boldsymbol{u} \frac{\partial_j \boldsymbol{u} \cdot \partial_i \boldsymbol{u}}{|\boldsymbol{u}|^3} + 3 \boldsymbol{u} \frac{(\partial_j \boldsymbol{u} \cdot \boldsymbol{u})(\partial_i \boldsymbol{u} \cdot \boldsymbol{u})}{|\boldsymbol{u}|^5}.
	\end{align*}
	This implies that
	\begin{align*}
		\Big|\partial_i\partial_j\Big(\frac{\boldsymbol{u}}{|\boldsymbol{u}|}\Big)\Big|&\le \Big|\frac{\partial_i\partial_j \boldsymbol{u}}{|\boldsymbol{u}|}\Big| + 7 \frac{|\partial_i \boldsymbol{u}||\partial_j \boldsymbol{u}|}{|\boldsymbol{u}|^2}\le \frac{1}{c} |\partial_i\partial_j \boldsymbol{u}| + \frac{7}{c^2} |\partial_i \boldsymbol{u}||\partial_j \boldsymbol{u}|.
	\end{align*}

	To prove \eqref{eq: normalization bounds iii}, first notice that $\partial_i\partial_j \boldsymbol{u}_h = 0$ on every $K\in \mathcal{T}_h$. Therefore, that the last estimate reduces to 
	\begin{align*}
		\partial_i\partial_j\Big(\frac{\boldsymbol{u}_h}{|\boldsymbol{u}_h|}\Big) \le \frac{7}{c^2}|\partial_i \boldsymbol{u}_h||\partial_j \boldsymbol{u}_h|.
	\end{align*}
	
	\textbf{Step~2:} To prove \eqref{eq: normalization bounds 2}, the triangular inequality yields that
	\begin{equation}\label{eq: in appendix}
	\left|\frac{\boldsymbol{u}}{|\boldsymbol{u}|}-\frac{\widetilde{\boldsymbol{u}}}{|\widetilde{\boldsymbol{u}}|}\right|=\left|\frac{\boldsymbol{u}(|\widetilde{\boldsymbol{u}}|-|\boldsymbol{u}|)+|\boldsymbol{u}|(\boldsymbol{u}-\widetilde{\boldsymbol{u}})}{|\boldsymbol{u}||\widetilde{\boldsymbol{u}}|}\right| \leq 2 \min \left\{|\boldsymbol{u}|^{-1},|\widetilde{\boldsymbol{u}}|^{-1}\right\}|\boldsymbol{u}-\widetilde{\boldsymbol{u}}|\le \frac{2}{c} |\boldsymbol{u}-\widetilde{\boldsymbol{u}}|.
	\end{equation}
	Integrating \eqref{eq: in appendix} over $\Omega$, we prove \eqref{eq: normalization bounds 2} for $k=0$.
	If $k=1$, we use \eqref{eq:lem_4_I} to deduce:
	\begin{align*}
		\left|\partial_j\Big(\frac{\boldsymbol{u}}{|\boldsymbol{u}|}\Big)-\partial_j\Big(\frac{\widetilde{\boldsymbol{u}}}{|\widetilde{\boldsymbol{u}}|}\Big)\right| & \stackrel{\eqref{eq:lem_4_I}}{=} \left|\frac{\partial_j \boldsymbol{u}}{|\boldsymbol{u}|} - \frac{\partial_j \widetilde{\boldsymbol{u}}}{|\widetilde{\boldsymbol{u}}|}\right| + \left|\frac{\boldsymbol{u} (\partial_j \boldsymbol{u} \cdot \boldsymbol{u})}{|\boldsymbol{u}|^3} - \frac{\widetilde{\boldsymbol{u}} (\partial_j \widetilde{\boldsymbol{u}} \cdot \widetilde{\boldsymbol{u}})}{|\widetilde{\boldsymbol{u}}|^3}\right|\eqcolon I_1 + I_2.
	\end{align*}
	It holds that 
	\begin{align*}
		I_1 & = \left|\frac{(\partial_j \boldsymbol{u}) | \widetilde{\boldsymbol{u}}| - (\partial_j \widetilde{\boldsymbol{u}} )|\boldsymbol{u}|}{|\boldsymbol{u}||\widetilde{\boldsymbol{u}}|}\right| \le \frac{|\partial_j \boldsymbol{u} - \partial_j \widetilde{\boldsymbol{u}}| |\widetilde{\boldsymbol{u}}| + |\partial_j \widetilde{\boldsymbol{u}}| \big||\widetilde{\boldsymbol{u}}| - |\boldsymbol{u}|\big|}{|\boldsymbol{u}||\widetilde{\boldsymbol{u}}|}\le \frac{1}{c} |\partial_j \boldsymbol{u} - \partial_j \widetilde{\boldsymbol{u}}| + \frac{1}{c^2} |\partial_j\widetilde{\boldsymbol{u}}||\boldsymbol{u} - \widetilde{\boldsymbol{u}}|
	\end{align*}
	and 
	\begin{align*}
		I_2 & \le \left| \frac{\boldsymbol{u}(\partial_j\boldsymbol{u}\cdot \boldsymbol{u})-\widetilde{\boldsymbol{u}}(\partial_j\widetilde{\boldsymbol{u}}\cdot\widetilde{\boldsymbol{u}})}{|\boldsymbol{u}|^3}\right| + \left|\widetilde{\boldsymbol{u}}(\partial_j\widetilde{\boldsymbol{u}} \cdot \widetilde{\boldsymbol{u}})\Big(\frac{1}{|\boldsymbol{u}|^3} - \frac{1}{|\widetilde{\boldsymbol{u}}|^3}\Big) \right|\eqqcolon I_{2,1} + I_{2,2}.
	\end{align*}
	We can estimate $I_{2,1}$ as
	\begin{align*}
		I_{2,1} &=  \left| \frac{(\boldsymbol{u}-\widetilde{\boldsymbol{u}})( \partial_j \boldsymbol{u} \cdot \boldsymbol{u}) + \widetilde{\boldsymbol{u}}((\boldsymbol{u}-\widetilde{\boldsymbol{u}}) \cdot \partial_j \boldsymbol{u}) + \widetilde{\boldsymbol{u}}(\widetilde{\boldsymbol{u}}\cdot (\partial_j \boldsymbol{u} -\partial_j \widetilde{\boldsymbol{u}}))}{|\boldsymbol{u}|^3} \right|\\
		& \le \frac{|\boldsymbol{u} - \widetilde{\boldsymbol{u}}| |\partial_j \boldsymbol{u} \cdot \boldsymbol{u}|}{|\boldsymbol{u}|^3} + \frac{|\widetilde{\boldsymbol{u}}| |\boldsymbol{u} - \widetilde{\boldsymbol{u}}| |\partial_j \boldsymbol{u}|}{|\boldsymbol{u}|^3} + \frac{|\widetilde{\boldsymbol{u}}| |\partial_j \boldsymbol{u} - \partial_j \widetilde{\boldsymbol{u}}| |\widetilde{\boldsymbol{u}}|}{|\boldsymbol{u}|^3} \\
		& \le \frac{2}{c^2} |\partial_j \boldsymbol{u}| |\boldsymbol{u} - \widetilde{\boldsymbol{u}}| + \frac{1}{c^2} |\partial_j \boldsymbol u - \partial_j \widetilde{\boldsymbol u}|.
	\end{align*}
	To estimate $I_{2,2}$ note that the mean value theorem applied to the function $f(t) = 1/t^3$ provides some $\xi \in 
	\mathbb{R}$ such that $\min\{|\boldsymbol{u}|, |\widetilde{\boldsymbol{u}}|\} \le \xi \le \max\{|\boldsymbol{u}|, |\widetilde{\boldsymbol{u}}|\}$, with
	$$
	\bigg|\frac{1}{|\boldsymbol{u}|^3} - \frac{1}{|\widetilde{\boldsymbol{u}}|^3}\bigg| = \frac{3}{\xi^4} \big||\widetilde{\boldsymbol{u}}| - |\boldsymbol{u}|\big|\le \frac{3}{\xi^4} |\boldsymbol{u} - \widetilde{\boldsymbol{u}}|.
	$$
	Since $c\le|\boldsymbol{u}|, |\widetilde{\boldsymbol{u}}|\le c^{-1}$, we have $\xi\ge c$ and therefore
	\begin{align*}
		I_{2,2}\le |\widetilde{\boldsymbol{u}}|^2|\partial_j\widetilde{\boldsymbol{u}}|\left|\frac{1}{|\boldsymbol{u}|^3} - \frac{1}{|\widetilde{\boldsymbol{u}}|^3}\right|\le \frac{3}{c^6}|\partial_j\widetilde{\boldsymbol{u}}| |\boldsymbol{u}-\widetilde{\boldsymbol{u}}|.
	\end{align*}
	Putting all the estimates together, we prove
	\begin{equation}\label{eq: in appendix 2}
		\left|\partial_j\Big(\frac{\boldsymbol{u}}{|\boldsymbol{u}|}\Big)-\partial_j\Big(\frac{\tilde{\boldsymbol{u}}}{|\tilde{\boldsymbol{u}}|}\Big)\right| \lesssim |\partial_j \boldsymbol{u} - \partial_j \tilde{\boldsymbol{u}}| + (|\partial_j \boldsymbol{u}| + |\partial_j \tilde{\boldsymbol{u}}|) |\boldsymbol{u} - \tilde{\boldsymbol{u}}|,
	\end{equation}
	where the hidden constant depends only on $c$. Considering \eqref{eq: in appendix}--\eqref{eq: in appendix 2} and integrating over $\Omega$, we conclude the proof of \eqref{eq: normalization bounds 2} for $k=1$.
\end{proof}

\begin{proof}[{\bfseries Proof of Lemma~\ref{lem: projection}}]
	To prove \eqref{eq: stability projection}, note that 
	$
	\bP(\uu)\vv = \vv - (\bN(\uu)\cdot\vv)\bN(\uu),
	$
	which implies 
	\begin{align*}
		\norm{\bP(\uu)\vv}_{\bW^{k,p}(\Omega)}\lesssim\norm{\vv}_{\bW^{k,p}(\Omega)}(1+\norm{\bN(\uu)}^2_{\bW^{k,\infty}(\Omega)}).
	\end{align*}
	Using the estimates from Lemma~\ref{lem: normalization bounds} as well as $1\lesssim c^2 \le \norm{\uu}_{\bL^\infty}^2$ by assumption, we obtain
	\begin{align*}
		\norm{\bP(\uu)\vv}_{\bW^{k,p}(\Omega)} &\stackrel{\eqref{eq: normalization bounds}}{\lesssim} \norm{\vv}_{\bW^{k,p}(\Omega)}\big(1+\norm{\uu}_{\bW^{k,\infty}(\Omega)}^2\big)\lesssim\norm{\vv}_{\bW^{k,p}(\Omega)}\norm{\uu}_{\bW^{k,\infty}(\Omega)}^2,
	\end{align*}
	where the hidden constants depend only on $c$. This proves \eqref{eq: stability projection}.
	To prove \eqref{eq: projection}, let
	\begin{equation}\label{eq: error vector}
	\boldsymbol{e}\coloneqq\bN(\uu)-\bN(\widetilde{\uu})=\dfrac{\uu}{|\uu|}-\dfrac{\widetilde{\uu}}{|\widetilde{\uu}|}
	\end{equation}
	and note that
	$$
	(\bP(\uu)-\bP(\widetilde{\uu}))\vv=-\frac{(\uu\uu^T)}{|\uu|^2}\vv+\frac{(\widetilde{\uu}\widetilde{\uu}^T)}{|\widetilde{\uu}|^2}\vv\stackrel{\eqref{eq: error vector}}{=}
	-\big(\bN(\uu)\ee^T+\ee\,\bN(\widetilde{\uu})^T\big)\vv.
	$$
	This leads to
	\begin{align*}
	&\normO{(\bP(\uu)-\bP(\widetilde{\uu}))\vv} \le \norm{\vv}_{\bL^\infty(\Omega)}\normO[\big]{\bN(\uu)\ee^T+\ee\,\bN(\widetilde{\uu})^T}\\
	&\le \norm{\vv}_{\bL^\infty(\Omega)}\normO{\ee}\big(\norm{\bN(\widetilde{\uu})}_{\bL^\infty(\Omega)}+\norm{\bN(\uu)}_{\bL^\infty(\Omega)}\big)\stackrel{\eqref{eq: error vector}}{=} 2 \norm{\vv}_{\bL^\infty(\Omega)}\normO{\bN(\uu)-\bN(\widetilde{\uu})}.
	\end{align*}
	Applying \eqref{eq: normalization bounds 2} from Lemma~\ref{lem: normalization bounds} concludes the proof of \eqref{eq: projection} for $k=0$. To prove \eqref{eq: projection L1}, we can apply the Hölder inequality and proceed similarly as
	\begin{align*}
	&\norm{(\bP(\uu)-\bP(\widetilde{\uu}))\vv}_{\bL^1(\Omega)} \le \normO{\vv}\normO[\big]{\bN(\uu)\ee^T+\ee\,\bN(\widetilde{\uu})^T}\\
	&\le \normO{\vv}\normO{\ee}\big(\norm{\bN(\widetilde{\uu})}_{\bL^\infty(\Omega)}+\norm{\bN(\uu)}_{\bL^\infty(\Omega)}\big)\stackrel{\eqref{eq: error vector}}{=} 2 \normO{\vv}\normO{\bN(\uu)-\bN(\widetilde{\uu})}.
	\end{align*} Again, Lemma~\ref{lem: normalization bounds} concludes the proof of \eqref{eq: projection L1}.
	The proof of \eqref{eq: projection} for $k=1$ follows by noticing that
	\begin{align*}
		\partial_j\big[(\bP(\uu)-\bP(\widetilde{\uu}))\vv\big] &= -\partial_j\big[(\bN(\uu)\ee^T+\ee\,\bN(\widetilde{\uu})^T)\vv\big]\\
			&\mkern-100mu= -\big[\partial_j\bN(\uu)\,\ee^T + \bN(\uu)\,\partial_j\ee^T + (\partial_j\ee) \,\bN(\widetilde{\uu})^T + \ee \,\partial_j\bN(\widetilde{\uu})^T{\big]}\vv \\
			&- \big[\bN(\uu)\ee^T+\ee\,\bN(\widetilde{\uu})^T\big]\partial_j\vv.
	\end{align*}
	Thanks to \eqref{eq: normalization bounds} from Lemma~\ref{lem: normalization bounds} and the present assumptions, we have
	$$
		\norm{\nabla\bN(\uu)}_{\bL^\infty(\Omega)}\lesssim \norm{\nabla\uu}_{\bL^\infty(\Omega)}\le M \quad \text{and} \quad \norm{\nabla\bN(\widetilde{\uu})}_{\bL^\infty(\Omega)}\lesssim \norm{\nabla\widetilde{\uu}}_{\bL^\infty(\Omega)}\le M.
	$$
	This yields
	\begin{align*}
		&\normO{\nabla\big((\bP(\uu)-\bP(\widetilde{\uu}))\vv\big)} \lesssim \big(\normO{\ee}+\normO{\nabla\ee}\big)\norm{\vv}_{\bL^\infty(\Omega)}+ \normO{\ee}\norm{\nabla\vv}_{\bL^\infty(\Omega)}\\
		& \stackrel{\eqref{eq: error vector}}{=} \big(\normO{\bN(\uu)-\bN(\widetilde{\uu})} + \normO{\nabla\bN(\uu)-\nabla\bN(\widetilde{\uu})}\big)\norm{\vv}_{\bL^\infty(\Omega)}+\normO{\bN(\uu)-\bN(\widetilde{\uu})}\norm{\nabla\vv}_{\bL^\infty(\Omega)},
	\end{align*}
	where the hidden constant depends only on $M$. Applying \eqref{eq: normalization bounds 2} from Lemma~\ref{lem: normalization bounds} concludes the proof of \eqref{eq: projection} for $k=1$ and therefore the proof of the lemma.
\end{proof}

\begin{proof}[{\bfseries Proof of Lemma~\ref{lem: stability I_h}}]
	For $k=0$ and $K\in \mathcal{T}_h$, we define the set
	$$
		\mathcal{K}=\left\{(r_h,q_h)\in (\boldsymbol{\mathcal{P}}_r(K))^2\colon \norm{r_h}_{\bL^p(K)}=1, c \le |q_h|\le c^{-1}\right\}.
	$$
	Being closed and bounded on a finite dimensional space, $\mathcal{K}$ is compact. Therefore, the continuous function
	\begin{align*}
		F\colon \mathcal{K} \to \R, \qquad
		(r_h,q_h) \mapsto \frac{\norm{\III_h(r_h/|q_h|)}_{\bL^p(K)}}{\norm{{r_h}/{|q_h|}}_{\bL^p(K)}}
	\end{align*}
	attains its maximum on $\mathcal{K}$. This implies that there exists a constant $C>0$ such that
	\begin{equation}\label{eq: stability I_h L^p}
		\norm[\Big]{\III_h\Big(\frac{r_h}{|q_h|}\Big)}_{\bL^p(K)} \le C \norm[\Big]{\frac{r_h}{|q_h|}}_{\bL^p(K)}, \quad \text{for all} \quad (r_h,q_h)\in \mathcal{K}.
	\end{equation}  
	Due to the linearity of $\III_h$, the constraint $\norm{r_h}_{\bL^p(K)}=1$ does not affect the value of $F$, since it holds that $F(\lambda r_h, q_h)=F(r_h, q_h)$ for every $\lambda\neq 0$. This proves the $\bL^p$-stability.
	
	For $k=1$, let $\alpha_K\coloneqq 1/|K|\int_K r_h/|q_h|\d x$. The inverse estimate \eqref{eq: inverse estimate L^p} proves that
	\begin{align*}
		\norm[\Big]{\nabla \III_h\Big(\frac{r_h}{|q_h|}\Big)}_{\bL^p(K)} & = \norm[\Big]{\nabla \III_h\Big(\frac{r_h}{|q_h|}-\alpha_K\Big)}_{\bL^p(K)}  \lesssim h_K^{-1}\norm[\Big]{\III_h\Big(\frac{r_h}{|q_h|}-\alpha_K\Big)}_{\bL^p(K)}\\
		& \stackrel{\eqref{eq: stability I_h L^p}}{\lesssim} h_K^{-1}\norm[\Big]{\frac{r_h}{|q_h|}-\alpha_K}_{\bL^p(K)} \lesssim \norm[\Big]{\nabla\frac{r_h}{|q_h|}}_{\bL^p(K)}.
	\end{align*}
	This proves the $\bW^{1,p}$-stability result and hence concludes the proof.
\end{proof}

\begin{proof}[{\bfseries Proof of Lemma~\ref{lem: semi-discrete approximation}}]
	Taylor expansion of $\mm\in C^2([0,T\,],\bL^2(\Omega))$ around $t_{n-2}$ shows
	\begin{align*}
		\mm_\star^{n-1}&=\mm_\star^{n-2}+\tau\partial_t\mm_\star^{n-2}+\int_{t_{n-2}}^{t_{n-1}}(t_{n-1}-s)\partial_s^2\mm(s)\d s,\\
		\mm_\star^n&=\mm_\star^{n-2}+2\tau\partial_t\mm_\star^{n-2}+\int_{t_{n-2}}^{t_{n}}(t_{n}-s)\partial_s^2\mm(s)\d s.
	\end{align*}
	Therefore, we get
	\begin{align*}
		\mm_\star^n-\widehat{\mm}_\star^{n}&\stackrel{\eqref{eq: Ritz projections}}{=} \mm_\star^n-2\mm_\star^{n-1}+\mm_\star^{n-2}\\
		&\mkern+5mu= \int_{t_{n-2}}^{t_{n}}(t_{n}-s)\partial_s^2\mm(s)\d s-2\int_{t_{n-2}}^{t_{n-1}}(t_{n-1}-s)\partial_s^2\mm(s)\d s.
	\end{align*}
	This yields 
	\begin{equation}\label{eq: extrapolation error}
		\normO{\widehat{\mm}_\star^n-\mm_\star^n} \lesssim \tau^2
	\end{equation}
	where the hidden constant depends only on $\max\limits_{t\in[0,T\,]}\normO{\partial_t^2\mm(t)}=\norm{\partial_t^2\mm}_{L^\infty(0,T;\bL^2(\Omega))}\le M <\infty$. This proves \eqref{eq: consistency semi predictor}.

	To prove \eqref{eq: consistency semi derivative}, note that $\bP(\mm_\star^n)\partial_t\mm_\star^n=\partial_t\mm_\star^n\in\bT(\mm_\star^n)$. Therefore, the time derivative error reads
	\begin{equation}\label{eq: in appendix 3}
		\begin{aligned}
			&\vv_\star^{n-1}-\partial_t\mm_\star^n\stackrel{\eqref{eq: v_star}}{=} \bP(\widehat{\mm}_\star^{n})\frac{1}{\tau}\Big(\frac{3}{2}\mm_\star^n-2\mm_\star^{n-1}+\frac{1}{2}\mm_\star^{n-2}\Big)-\bP(\mm_\star^n)\partial_t\mm_\star^n\\
			&= \bP(\widehat{\mm}_\star^{n})\bigg[\frac{1}{\tau}\Big(\frac{3}{2}\mm_\star^n-2\mm_\star^{n-1}+\frac{1}{2}\mm_\star^{n-2}\Big)-\partial_t\mm_\star^n\bigg]+\big[\bP(\widehat{\mm}_\star^{n})-\bP(\mm_\star^n)\big]\partial_t\mm_\star^n.
		\end{aligned}
	\end{equation}
	Since $\mm\in C^1([0,T\,],\bL^\infty(\Omega))$, the last term in \eqref{eq: in appendix 3} can be bounded thanks to Lemma \ref{lem: projection} as
	\begin{equation}\label{eq: in appendix 4}
		\normO{\big(\bP(\widehat{\mm}_\star^{n})-\bP(\mm_\star^n)\big)\partial_t\mm_\star^n} \stackrel{\eqref{eq: projection}}{\lesssim}\normO{\widehat{\mm}_\star^{n}-\mm_\star^n}\norm{\partial_t\mm_\star^n}_{\bL^{\infty}(\Omega)}\stackrel{\eqref{eq: extrapolation error}}{\lesssim}\tau^2,
	\end{equation}
	where the hidden constant depends only on $M$.

	The bound on the first term of the right-hand side of \eqref{eq: in appendix 3} follows from a Taylor expansion of $\mm\in C^3([0,T],\bL^2(\Omega))$ around $t_{n-2}$, exactly as in \eqref{eq: time derivative error}. Indeed,
	\begin{align*}
		\normO[\Big]{\bP(\widehat{\mm}_\star^{n})&\Big[\frac{1}{\tau}\Big(\frac{3}{2}\mm_\star^n-2\mm_\star^{n-1}+\frac{1}{2}\mm_\star^{n-2}\Big)-\partial_t\mm_\star^n\Big]}\\
		&\stackrel{\eqref{eq: stability projection}}{\lesssim}\norm{\mhat_\star^n}^2_{\bL^\infty(\Omega)}\normO[\Big]{\frac{1}{\tau}\Big(\frac{3}{2}\mm_\star^n-2\mm_\star^{n-1}+\frac{1}{2}\mm_\star^{n-2}\Big)-\partial_t\mm_\star^n}.
	\end{align*}
	Arguing exactly as in $\eqref{eq: time derivative error}$, we have that
	\begin{equation*}\label{eq: time derivative error appendix}
		\normO[\Big]{\frac{1}{\tau}\Big(\frac{3}{2}\mm_\star^n-2\mm_\star^{n-1}+\frac{1}{2}\mm_\star^{n-2}\Big)-\partial_t\mm_\star^n}\lesssim \tau^2.
	\end{equation*}
	Plugging this and \eqref{eq: in appendix 4} into \eqref{eq: in appendix 3}, we derive
	\begin{equation*}\label{eq: time derivative error 2}
		\normO{\vv_\star^{n-1}-\partial_t\mm_\star^n} \lesssim \tau^2,
	\end{equation*}
	where the hidden constant depends only on $M$. This concludes the proof.
\end{proof}
\begin{lemma}[Discrete Gronwall inequality \texorpdfstring{\cite[Lemma~10.5]{thomee2013galerkin}})]\label{lem: gronwall}
Let $\left\{a_n\right\}_{n \geq 0},$\\$\left\{b_n\right\}_{n \geq 0}$, and $\left\{w_n\right\}_{n \geq 0}$ be sequences of real numbers such that
$$
a_n \leq a_{n+1}, \quad b_n \geq 0, \quad \text { and } \quad w_n \leq a_n+\sum_{j=0}^{n-1} b_j w_j \quad \text { for all } n \geq 0 .
$$
Then, it holds that
\begin{equation*}
	w_n \leq a_n \exp \sum_{j=0}^{n-1} b_j \quad \text { for all } n \geq 0.\tag*{\qed}
\end{equation*}
\end{lemma}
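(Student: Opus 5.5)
The plan is a direct telescoping argument for a fixed index, with the upper bound $a_n$ frozen. One point must be recorded first. The final step $\prod_j(1+b_j)\le\exp\sum_j b_j$ can only be multiplied by $a_n$ when $a_n\ge0$, and without this the claim can fail. For example, $a_n\equiv-1$, $b_0=1$, $w_0=-1$, $w_1=-2$ satisfy the hypotheses, but $w_1\le -e$ is false. So I would add the assumption $a_n\ge 0$ to the statement (or restrict the claim to indices with $a_n\ge0$). This is harmless for the paper: in Step~5 of the proof of Proposition~\ref{prop: stability full discretization}, the sequence $a_n$ from \eqref{eq: def a_n} is a sum of squared norms and hence nonnegative.

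Fix $n\ge0$. Because $(a_k)$ is nondecreasing, the hypothesis gives $w_k\le a_n+\sum_{j=0}^{k-1}b_jw_j$ for every $0\le k\le n$. I then define the auxiliary partial sums
\[
S_k\coloneqq a_n+\sum_{j=0}^{k-1}b_jw_j,\qquad k=0,\dots,n,
\]
so that $S_0=a_n$ and $w_k\le S_k$ for all $k\le n$.

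The key recursion comes from $b_k\ge0$ together with $w_k\le S_k$:
\[
S_{k+1}=S_k+b_kw_k\le S_k+b_kS_k=(1+b_k)S_k .
\]
This step uses only the nonnegativity of $b_k$, not the sign of $w_k$ or $S_k$. Since each factor $1+b_k$ is positive, the inequality can be iterated, giving
\[
S_k\le a_n\prod_{j=0}^{k-1}(1+b_j).
\]

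Taking $k=n$, I obtain $w_n\le S_n\le a_n\prod_{j=0}^{n-1}(1+b_j)$. The elementary inequality $1+x\le e^x$ applied with $x=b_j$, combined with $a_n\ge0$, then yields $w_n\le a_n\exp\sum_{j=0}^{n-1}b_j$. The only delicate point is this sign requirement on $a_n$, which I would state explicitly; everything else is routine bookkeeping.
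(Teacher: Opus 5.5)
The paper does not prove this lemma; it only quotes \cite[Lemma~10.5]{thomee2013galerkin} and closes the statement with \qed. So there is no in-paper argument to compare with, and your proposal is a complete and correct proof. Using monotonicity, every $w_k$ with $k\le n$ is bounded in terms of the fixed value $a_n$. The recursion $S_{k+1}\le(1+b_k)S_k$ then uses only $b_k\ge0$. Iterating with the positive factors $1+b_k$ gives $w_n\le a_n\prod_{j=0}^{n-1}(1+b_j)$ for \emph{every} real $a_n$. The sign of $a_n$ enters only in the final comparison $\prod_{j}(1+b_j)\le\exp\sum_{j}b_j$.

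Your caveat is a genuine correction, not pedantry. For sequences of real numbers, as written, the lemma is false. Your example ($a_n\equiv-1$, $b_0=1$, $w_0=-1$, $w_1=-2$) satisfies all hypotheses but violates the conclusion at $n=1$. It extends to full sequences, for instance by taking $b_n=0$ and $w_n=-2$ for $n\ge2$. Adding $a_n\ge0$ costs nothing in the paper. In Step~5 of the proof of Proposition~\ref{prop: stability full discretization}, the sequence $a_n$ from \eqref{eq: def a_n} is a nondecreasing sum of squared norms, and $b_n$ is a nonnegative multiple of $\tau$.

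Compared with the usual inductive proof, your route is slightly stronger. That proof assumes $w_j\le a_j e^{B_j}$ for $j<n$ with $B_j=\sum_{i<j}b_i$ and telescopes via $b_je^{B_j}\le e^{B_{j+1}}-e^{B_j}$. Your argument instead postpones the exponential to the last line and yields the sharper product bound. It also makes clear that nonnegativity of $a_n$ is the only extra hypothesis the exponential form needs.
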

\renewcommand{\thetheorem}{A.\arabic{theorem}}
\setcounter{theorem}{0} 

\printbibliography
\end{document}

%% file: llg_bdf2_a_priori_estimates_numerics.tex
To support our theoretical findings, we present some numerical experiments that demonstrate the convergence 
rates $\OO(h+\tau^2)$ stated in Theorem~\ref{thm: main result}. Throughout, we restrict to $\Omega\subset \R^2$ and report 
on the empirical results obtained by our Matlab implementation of Algorithm~\ref{alg: full discr}. We stress that $d=2$ relates to the thin-film limit of LLG and thus keeps relevant physics; see \cite{davoli2020}.
\subsection{Some experiments from \cite{part1}}\label{sec: recall_part_1}
Some experiments on experimental convergence rates were already shown in our previous work \cite{part1} to anticipate 
the (formally) second-order convergence in time and the results of the present work. 
We refer to \cite[Example 4.1]{part1} for a test case with a smooth constant initial condition $\mm^0\equiv(0,1,0)$ 
evolving under the influence of a time-constant external field $\ff(x,y) = (x,y,0)/|(x,y,0)|$, 
while we refer to \cite[Example 4.2]{part1} for an experiment proposed in \cite{feischl} with a time-dependent applied field and exact solution
$$
\mm(\xx,t) \coloneqq \begin{pmatrix}
    -(x_1^3-3x_1^2/2+1/4)\sin(3\pi t/T) \\
    \sqrt{1-(x_1^3-3x_1^2/2+1/4)^2} \\
    -(x_1^3-3x_1^2/2+1/4)\cos(3\pi t/T)
\end{pmatrix}.
$$ 
Numerical experiments in \cite{part1} confirm the expected convergence rates, i.e., the first-order convergence in space and the second-order convergence in time of the $\ell^\infty(\bH^1)$-error
$$
\max_{j=1,\dots,N} \norm{\mm_h^j - \mm(t_j) }_{\bH^{1}{(\Omega)}} = \OO(h+\tau^2);
$$
see also Figure~\ref{fig:recall}.
\begin{figure}[t]
\centering

\begin{subfigure}{0.9\linewidth}
  \centering
  \begin{subfigure}[b]{0.48\linewidth}
    \includegraphics[width=\linewidth]{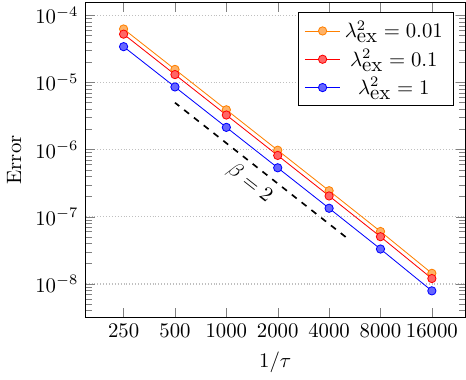}
  \end{subfigure}
  \begin{subfigure}[b]{0.48\linewidth}
    \includegraphics[width=\linewidth]{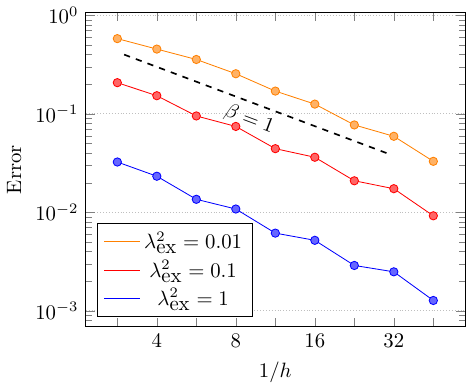}
  \end{subfigure}
  \caption{Empirical convergence rates of the $\bH^1$-error $\norm{\mm_{\rm ref}(T)-\mm_{h\tau}(T)}_{\bH^1(\Omega)}$ at final time $T$ for the first example of Section~\ref{sec: recall_part_1} (time-constant external field $\ff$).}
\end{subfigure}

\vspace{0.5em}

\begin{subfigure}{0.9\linewidth}
  \centering
  \begin{subfigure}[b]{0.48\linewidth}
    \includegraphics[width=\linewidth]{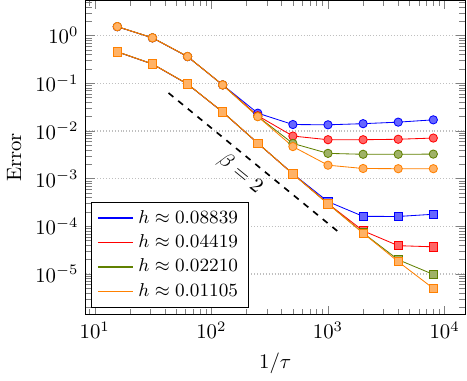}
  \end{subfigure}
  \begin{subfigure}[b]{0.48\linewidth}
    \includegraphics[width=\linewidth]{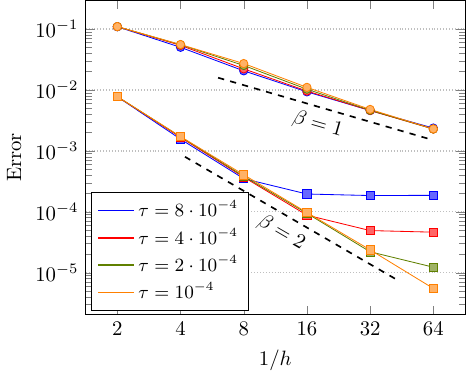}
  \end{subfigure}
  \caption{Empirical convergence rates of $\max\limits_{j=1,\dots,N} \norm{\mm(t_j) - \mm_h^j}_{\bL^2(\Omega)}$ (\protect\tikz[baseline=-0.5ex]{
  \protect\draw[line width=1pt] (-0.4,0) -- (-0.15,0); 
  \protect\draw[line width=1pt] (-0.15,-0.15) rectangle (0.15,0.15); 
  \protect\draw[line width=1pt] (0.15,0) -- (0.4,0); 
 })
 and $\max\limits_{j=1,\dots,N} \norm{\mm(t_j) - \mm_h^j}_{\bH^1(\Omega)}$
 (\protect\tikz[baseline=-0.5ex]{
  \protect\draw[line width=1pt] (-0.4,0) -- (-0.15,0); 
  \protect\draw[line width=1pt] (0,0) circle [radius=0.15]; 
  \protect\draw[line width=1pt] (0.15,0) -- (0.4,0); 
 }) for the second example of Section~\ref{sec: recall_part_1} (time dependent external field $\ff$).}
\end{subfigure}
\captionsetup{width=0.9\linewidth}
\caption{Empirical convergence rates in the experiments from Section~\ref{sec: recall_part_1}.}
\label{fig:recall}

\end{figure}
In particular, these numerical experiments show that for fixed refinement, either spatial or temporal error starts to dominate and the overall error does not decrease anymore. To prevent the spatial error to dominate over the temporal error and to equibalance the error components, one can impose $h=C\tau^2$, for some constant $C>0$. In Figure~\ref{fig:recall_bis}, we compute the convergence rates corresponding to uniform meshes $\mathcal{T}_\ell$, with mesh-sizes $h_\ell = 2^{-(2\ell+1)}$ and time-step sizes $\tau_\ell = 2^{-\ell} / 100 $, i.e., $C = 5000$.
This choice leads to optimal decay $\OO(\tau^2)$ for the error.
\begin{figure}[t]
\centering
\includegraphics[width=0.6\linewidth]{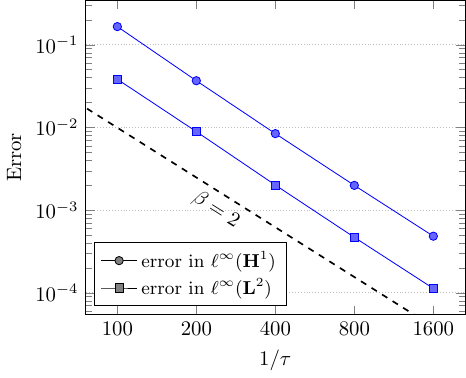}
\captionsetup{width=\textwidth}
\caption{Empirical convergence rates for $h = C \tau^2$ with $C = $5,000 in the second experiment from Section~\ref{sec: recall_part_1}, with time-dependent $\ff$.} 
\label{fig:recall_bis}
\end{figure}
\subsection{Experimental convergence rates for time-dependent applied field}\label{sec:example_2}
We consider another experiment from \cite{feischl}. We define 
$g(t)\coloneqq(T+0.1)/(T+0.1-t)$ and $d(\xx)\coloneqq~(x_1-1/2)^2+(x_2-1/2)^2$. For some $D>0$, the exact solution reads
$$
\mm(\xx,t) \coloneqq \begin{pmatrix}
    D\text{e}^{-\frac{g(t)}{1/4-d(\xx)}}(x_1-1/2)\\
    D\text{e}^{-\frac{g(t)}{1/4-d(\xx)}}(x_2-1/2)\\
    \sqrt{1-D^2 \text{e}^{-2\frac{g(t)}{1/4-d(\xx)}}d(\xx)}
\end{pmatrix}
\text{ if } d(\xx)<\dfrac{1}{4}\, \text{ and }\, \mm(\xx,t)\coloneqq\begin{pmatrix} 0\\0\\1 \end{pmatrix}\,\text{ else.}
$$ We refer to \cite{feischl} for a visualization of the evolution of $\mm$ in time. Notice that the solution is smooth in both space and time. 

We solve \eqref{eq: LLG system} for $\Omega = (0,1)^2$ and choose the parameters as in \cite{feischl}, i.e., $D\coloneqq 400$, $T=0.2$, $\alpha = 0.2$, and $\lamex^2=1.$ Moreover, we compute the external field $\ff$ by inserting the exact prescribed solution into \eqref{eq: alternative LLG}, i.e.,
$$
\ff(\xx,t) =  \alpha \partial_t\mm(\xx,t) + \mm(\xx,t)\times \partial_t\mm(\xx,t)-\lamex^2 \Delta \mm(\xx,t).
$$
To test the spatial convergence rate, we fix the time-step size $\tau = 10^{-3}$ and consider a sequence of uniform triangulations of $\Omega$ corresponding to a sequence of mesh sizes $h_{\ell} = 2^{-(\ell+1)}$ for $\ell = 0,\dots,7$, ranging from $h_0 = 1/2$ with $\#\mathcal{T}_0=16$ to $h_7 = 1/256$ with $\#\mathcal{T}_7=$262,144. In Figure~\ref{fig:example_2_conv} (right), we plot the convergence of the errors 
$$
\max\limits_{j=1,\dots,N} \norm{\mm(t_j) - \mm_h^j}_{\bL^2(\Omega)} \text{ and } \max\limits_{j=1,\dots,N} \norm{\mm(t_j) - \mm_h^j}_{\bH^1(\Omega)}
$$ over $1/h$. To test the temporal convergence rate, we fix the uniform triangulation of $\Omega$ consisting of 262,144 triangles and consider a sequence of uniform partitions of the time interval, i.e., $\tau_\ell = 2^{-\ell} /10$ for $\ell=0,\dots,7$. To avoid that the spatial error dominates over the temporal one, we require the mesh to be fine. We notice that this phenomenon is starting to appear for the lowest values of $\tau$ for the $\bL^2$-norm. In Figure~\ref{fig:example_2_conv} (left), we plot the convergence of the $\ell^\infty(\bL^2)$- and $\ell^\infty(\bH^1)$-errors over $1/\tau$.
\begin{figure}
\centering
\begin{subfigure}[b]{0.49\linewidth} 
   \centering
   \includegraphics[width=1\linewidth]{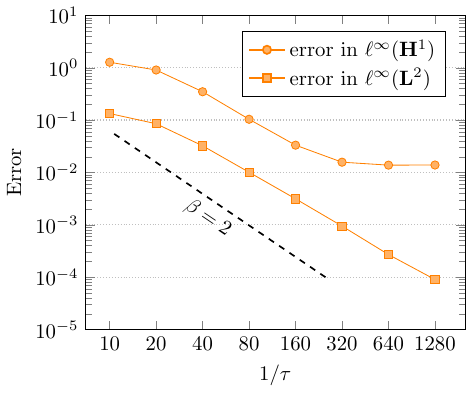}
\end{subfigure}
\begin{subfigure}[b]{0.49\linewidth}
  \centering
  \includegraphics[width=1\linewidth]{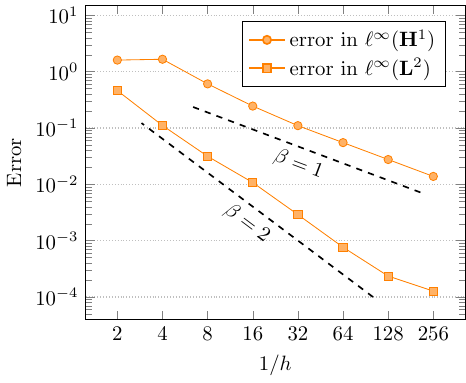}
\end{subfigure}
\captionsetup{width=\textwidth}
\caption{Empirical convergence rates of $\max\limits_{j=1,\dots,N} \norm{\mm(t_j) - \mm_h^j}_{\bL^2(\Omega)}$ (\protect\tikz[baseline=-0.5ex]{
  \protect\draw[line width=1pt] (-0.4,0) -- (-0.15,0); 
  \protect\draw[line width=1pt] (-0.15,-0.15) rectangle (0.15,0.15); 
  \protect\draw[line width=1pt] (0.15,0) -- (0.4,0); 
 })
 and $\max\limits_{j=1,\dots,N} \norm{\mm(t_j) - \mm_h^j}_{\bH^1(\Omega)}$   
 (\protect\tikz[baseline=-0.5ex]{
  \protect\draw[line width=1pt] (-0.4,0) -- (-0.15,0); 
  \protect\draw[line width=1pt] (0,0) circle [radius=0.15]; 
  \protect\draw[line width=1pt] (0.15,0) -- (0.4,0); 
 }) in the experiment from Section~\ref{sec:example_2}. Left: Convergence in time for a fixed spatial mesh with $h = 1/256$. Right: Convergence in space for a fixed time-step size $\tau = 10^{-3}$.}
\label{fig:example_2_conv}
\end{figure}
We observe that the temporal convergence rates are slightly below the expected second order in time. We conjecture that this might be due to the fact that the solution, despite being smooth in both space and time, has a term $d(t)$ which is close to be singular at $t = T$. 
To investigate this behavior further, Figure~\ref{fig:example_2_bis} shows the temporal convergence rates for an exact solution of the same form and the same parameters, but with $\tilde{g}(t) = (T + \chi)/ (T + \chi - t)$, where $\chi\in\{0.01, 0.1, 1\}$, instead of $g(t)$. 
\begin{figure}[ht!]
\centering
\includegraphics[width=0.6\linewidth]{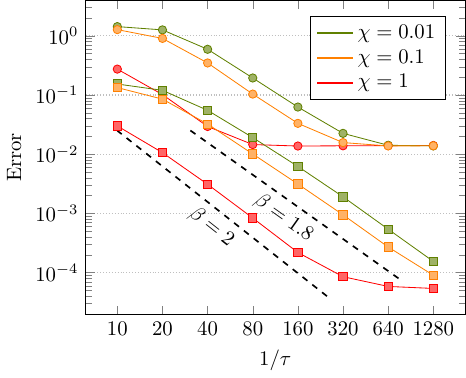}
\captionsetup{width=\textwidth}
\caption{Empirical convergence rates of $\max\limits_{j=1,\dots,N} \norm{\mm(t_j) - \mm_h^j}_{\bL^2(\Omega)}$ (\protect\tikz[baseline=-0.5ex]{
  \protect\draw[line width=1pt] (-0.4,0) -- (-0.15,0); 
  \protect\draw[line width=1pt] (-0.15,-0.15) rectangle (0.15,0.15); 
  \protect\draw[line width=1pt] (0.15,0) -- (0.4,0); 
 })
 and $\max\limits_{j=1,\dots,N} \norm{\mm(t_j) - \mm_h^j}_{\bH^1(\Omega)}$   
 (\protect\tikz[baseline=-0.5ex]{
  \protect\draw[line width=1pt] (-0.4,0) -- (-0.15,0); 
  \protect\draw[line width=1pt] (0,0) circle [radius=0.15]; 
  \protect\draw[line width=1pt] (0.15,0) -- (0.4,0); 
 }) 
 in the experiment from Section~\ref{sec:example_2} with $\tilde{g}(t) = (T + \chi)/(T + \chi - t)$.}
\label{fig:example_2_bis}
\end{figure}

We observe that the case $\chi = 1 $ leads to a better convergence rate, which is closer to the expected second-order rate, together with a better accuracy of the error, even though the stagnation of the $\ell^\infty(\bL^2)$-error appears before. On the contrary, the case $\chi = 0.01$ behaves similarly to the case of the original example $\chi = 0.1$ for the convergence rates, but with a slightly worse accuracy of the error. This seems to confirm our conjecture that the term $g(t)$, which is close to be singular at $t = T$, is responsible for the suboptimal convergence rates in time.

\subsection{Pulse effective field and high-frequency initial condition}\label{sec:example_3}

To go beyond the verification of the implementation via prescribed exact solutions, we investigate the convergence properties of Algorithm~\ref{alg: full discr} when the initial condition $\mm^0$ is chosen to be a high-frequency oscillation
\[
    \mm^0(x,y) = \frac{(0.2, \sin(4x+4y), \cos(4x+4y))}{|(0.2, \sin(4x+4y), \cos(4x+4y))|}
\]
and evolves under the influence of an external field which is constant in space and acts as a rapid pulse in time, namely
\[
    \ff(t) = (0, 0, \operatorname{sech}(10(t-0.5))).
\] 
The use of the $\operatorname{sech}$ function provides a sharp peak at $t=0.5$, forcing the scheme to adapt to a rapidly changing effective field.
For damping parameter $\alpha = 0.25$ and exchange constant $\lambda_{\text{ex}}^2 = 1$, we solve LLG~\eqref{eq: LLG system} on $\Omega = (0,1)^2$ up to the final time $T = 1$. 

To test second-order convergence in time, we fix a uniform triangulation of $\Omega$ consisting of $512$ triangles, corresponding to a mesh size $h = 2^{-7/2}\approx 0.0884$. 
We employ a uniform partition of the time interval with time-step sizes $\tau_\ell = 2^{-\ell} \tau_0$ for $\ell = 0, \ldots, 9$ with $\tau_0 = 5 \cdot 10^{-3}$. The error is calculated comparing the numerical solution to a reference solution $\mm_{\text{ref}}$ computed with $\tau_{\text{ref}} = 2^{-13} \tau_0$.

To test first-order convergence in space, we fix the time-step size $\tau = 10^{-1}$ and consider a sequence of uniform triangulations. We start from a coarse mesh with $h_0 = 2^{-3/2}\approx 0.3536$ and $\#\mathcal{T}_{0} = 32$ that is successively refined by longest edge bisection ten times, leading to $h_{10}=2^{-13/2}\approx 0.011$ and $\#\mathcal{T}_{10} =$32,768. The reference solution is obtained from a mesh refined twice further, i.e., $h_{12} =2^{-15/2}\approx 0.0055$ and $\#\mathcal{T}_{{12}} =$ 131,072. Figure~\ref{fig:example_3_conv} displays the $\bH^1$- and $\bL^2$-errors at final $T = 1$ over $1/\tau$ (left) and $1/h$ (right). We show that the scheme achieves optimal first-order convergence in space and second-order convergence in time.
\begin{figure}[ht!]
     \centering
     \begin{subfigure}[b]{0.48\linewidth}
         \centering
         \includegraphics[width=1\linewidth]{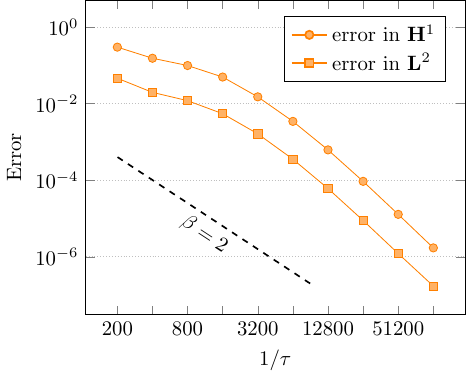}
     \end{subfigure}
     \begin{subfigure}[b]{0.49\linewidth}  
         \centering
         \includegraphics[width=1\linewidth]{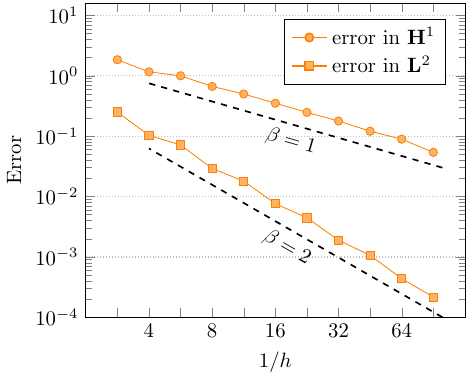}
     \end{subfigure}
     \captionsetup{width=\textwidth}
     \caption{Empirical convergence rates of the final-time errors $\norm{\mm_{\rm ref}(T) - \mm_{h\tau}(T)}_{\bL^2(\Omega)}$ (\protect\tikz[baseline=-0.5ex]{
  \protect\draw[line width=1pt] (-0.4,0) -- (-0.15,0); 
  \protect\draw[line width=1pt] (-0.15,-0.15) rectangle (0.15,0.15); 
  \protect\draw[line width=1pt] (0.15,0) -- (0.4,0); 
 })
 and $\norm{\mm_{\rm ref}(T) - \mm_{h\tau}(T)}_{\bH^1(\Omega)}$   
 (\protect\tikz[baseline=-0.5ex]{
  \protect\draw[line width=1pt] (-0.4,0) -- (-0.15,0); 
  \protect\draw[line width=1pt] (0,0) circle [radius=0.15]; 
  \protect\draw[line width=1pt] (0.15,0) -- (0.4,0); 
 }) for the experiment from Section~\ref{sec:example_3}. Left: Convergence in time for a fixed spatial mesh with $h = 2^{-7/2}\approx 0.0884$. Right: Convergence in space for a fixed time-step size $\tau = 10^{-1}$.}
     \label{fig:example_3_conv} 
\end{figure}

%% file: literature.bib
@Book{BrownB1962,
  author    = {Brown, W. F.},
  publisher = {North-Holland Publishing Company, New York},
  title     = {{Magnetostatic principles in ferromagnetism}},
  year      = {1962},
}

@book{BrownB1963,
	Author = {Brown, W F},
	Publisher = {Interscience Publishers, London},
	Title = {{Micromagnetics}},
	Year = {1963}}

@book{hubert1998magnetic,
  author    = {Alex Hubert and Rudolf Schäfer},
  title     = {Magnetic Domains: The Analysis of Magnetic Microstructures},
  publisher = {Springer},
  address   = {Berlin, Heidelberg},
  year      = {1998},
  isbn      = {978-3-540-64108-7},
  doi       = {10.1007/978-3-540-85054-0}
}

@article{LandauA1935,
  author  = {L. D. Landau and E. M. Lifshitz},
  title   = {On the theory of the dispersion of magnetic permeability in ferromagnetic bodies},
  journal = {Physikalische Zeitschrift der Sowjetunion},
  volume  = {8},
  pages   = {153--169},
  year    = {1935}
}

@article {MR4617914,
    AUTHOR = {Liu, Mingru and Huang, Pengzhan and He, Yinnian},
     TITLE = {A linearized {C}rank-{N}icolson/leapfrog scheme for the
              {L}andau-{L}ifshitz equation},
   JOURNAL = {Rocky Mountain J. Math.},
  FJOURNAL = {The Rocky Mountain Journal of Mathematics},
    VOLUME = {53},
      YEAR = {2023},
    NUMBER = {3},
     PAGES = {821--837},
      ISSN = {0035-7596,1945-3795},
   MRCLASS = {65M60},
  MRNUMBER = {4617914},
       DOI = {10.1216/rmj.2023.53.821},
       URL = {https://doi.org/10.1216/rmj.2023.53.821},
}

@article {MR3273326,
    AUTHOR = {Gao, Huadong},
     TITLE = {Optimal error estimates of a linearized backward {E}uler {FEM}
              for the {L}andau-{L}ifshitz equation},
   JOURNAL = {SIAM J. Numer. Anal.},
  FJOURNAL = {SIAM Journal on Numerical Analysis},
    VOLUME = {52},
      YEAR = {2014},
    NUMBER = {5},
     PAGES = {2574--2593},
      ISSN = {0036-1429,1095-7170},
   MRCLASS = {65M60 (35K51 35Q60 65M12 78M12)},
  MRNUMBER = {3273326},
MRREVIEWER = {Sarangam\ Majumdar},
       DOI = {10.1137/130936476},
       URL = {https://doi.org/10.1137/130936476},
}

@article {bkw2024,
    AUTHOR = {Bartels, S\"oren and Kov\'acs, Bal\'azs and Wang, Zhangxian},
     TITLE = {Error analysis for the numerical approximation of the harmonic
              map heat flow with nodal constraints},
   JOURNAL = {IMA J. Numer. Anal.},
  FJOURNAL = {IMA Journal of Numerical Analysis},
    VOLUME = {44},
      YEAR = {2024},
    NUMBER = {2},
     PAGES = {633--653},
      ISSN = {0272-4979,1464-3642},
   MRCLASS = {65M12 (65M60 80A19)},
  MRNUMBER = {4727106},
       DOI = {10.1093/imanum/drad037},
       URL = {https://doi.org/10.1093/imanum/drad037},
}

@article {MR3542789,
    AUTHOR = {An, Rong},
     TITLE = {Optimal error estimates of linearized {C}rank-{N}icolson
              {G}alerkin method for {L}andau-{L}ifshitz equation},
   JOURNAL = {J. Sci. Comput.},
  FJOURNAL = {Journal of Scientific Computing},
    VOLUME = {69},
      YEAR = {2016},
    NUMBER = {1},
     PAGES = {1--27},
      ISSN = {0885-7474,1573-7691},
   MRCLASS = {65M60 (35K55 35Q60 65M15)},
  MRNUMBER = {3542789},
MRREVIEWER = {Dmitriy\ Leykekhman},
       DOI = {10.1007/s10915-016-0181-1},
       URL = {https://doi.org/10.1007/s10915-016-0181-1},
}

@article {MR4362832,
    AUTHOR = {Cai, Yongyong and Chen, Jingrun and Wang, Cheng and Xie,
              Changjian},
     TITLE = {A second-order numerical method for
              {L}andau-{L}ifshitz-{G}ilbert equation with large damping
              parameters},
   JOURNAL = {J. Comput. Phys.},
  FJOURNAL = {Journal of Computational Physics},
    VOLUME = {451},
      YEAR = {2022},
     PAGES = {Paper No. 110831, 12},
      ISSN = {0021-9991,1090-2716},
   MRCLASS = {78M20},
  MRNUMBER = {4362832},
       DOI = {10.1016/j.jcp.2021.110831},
       URL = {https://doi.org/10.1016/j.jcp.2021.110831},
}

@article {MR4454924,
    AUTHOR = {An, Rong and Sun, Weiwei},
     TITLE = {Analysis of backward {E}uler projection {FEM} for the
              {L}andau-{L}ifshitz equation},
   JOURNAL = {IMA J. Numer. Anal.},
  FJOURNAL = {IMA Journal of Numerical Analysis},
    VOLUME = {42},
      YEAR = {2022},
    NUMBER = {3},
     PAGES = {2336--2360},
      ISSN = {0272-4979,1464-3642},
   MRCLASS = {65M60 (65M12 65M15)},
  MRNUMBER = {4454924},
       DOI = {10.1093/imanum/drab038},
       URL = {https://doi.org/10.1093/imanum/drab038},
}

@article {MR4646547,
    AUTHOR = {Guo, Qi and Xiao, Yamin},
     TITLE = {The initial-boundary value problem for the {L}andau-{L}ifshitz
              equation with {G}ilbert damping term},
   JOURNAL = {Commun. Math. Sci.},
  FJOURNAL = {Communications in Mathematical Sciences},
    VOLUME = {21},
      YEAR = {2023},
    NUMBER = {6},
     PAGES = {1727--1742},
      ISSN = {1539-6746,1945-0796},
   MRCLASS = {35Q60 (35B65)},
  MRNUMBER = {4646547},
MRREVIEWER = {Sharad\ Dwivedi},
}

@article {MR4673464,
    AUTHOR = {Cai, Yongyong and Chen, Jingrun and Wang, Cheng and Xie,
              Changjian},
     TITLE = {Error analysis of a linear numerical scheme for the
              {L}andau-{L}ifshitz equation with large damping parameters},
   JOURNAL = {Math. Methods Appl. Sci.},
  FJOURNAL = {Mathematical Methods in the Applied Sciences},
    VOLUME = {46},
      YEAR = {2023},
    NUMBER = {18},
     PAGES = {18952--18974},
      ISSN = {0170-4214,1099-1476},
   MRCLASS = {65M06 (35K61 35Q60 82D40)},
  MRNUMBER = {4673464},
       DOI = {10.1002/mma.9601},
}

@article {bartels2016,
    AUTHOR = {Bartels, S\"oren},
     TITLE = {Projection-free approximation of geometrically constrained
              partial differential equations},
   JOURNAL = {Math. Comp.},
  FJOURNAL = {Mathematics of Computation},
    VOLUME = {85},
      YEAR = {2016},
    NUMBER = {299},
     PAGES = {1033--1049},
      ISSN = {0025-5718,1088-6842},
   MRCLASS = {65J05 (65M12 65M60 65N12 65N30)},
  MRNUMBER = {3454357},
MRREVIEWER = {Waleed\ M.\ Abd-Elhameed},
       DOI = {10.1090/mcom/3008},
       URL = {https://doi.org/10.1090/mcom/3008},
}

@article {bs2006,
    AUTHOR = {Ba\v nas, \v Lubom\'ir and Slodi\v cka, Mari\'an},
     TITLE = {Error estimates for {L}andau-{L}ifshitz-{G}ilbert equation
              with magnetostriction},
   JOURNAL = {Appl. Numer. Math.},
  FJOURNAL = {Applied Numerical Mathematics. An IMACS Journal},
    VOLUME = {56},
      YEAR = {2006},
    NUMBER = {8},
     PAGES = {1019--1039},
      ISSN = {0168-9274,1873-5460},
   MRCLASS = {78A25 (65M60 74F15)},
  MRNUMBER = {2234837},
MRREVIEWER = {Nicolae\ Pop},
       DOI = {10.1016/j.apnum.2005.09.003},
}

@article {aj2006,
    AUTHOR = {Alouges, Fran\c cois and Jaisson, Pascal},
     TITLE = {Convergence of a finite element discretization for the
              {L}andau-{L}ifshitz equations in micromagnetism},
   JOURNAL = {Math. Models Methods Appl. Sci.},
  FJOURNAL = {Mathematical Models and Methods in Applied Sciences},
    VOLUME = {16},
      YEAR = {2006},
    NUMBER = {2},
     PAGES = {299--316},
      ISSN = {0218-2025,1793-6314},
   MRCLASS = {65M60 (35K55 35Q60 82D40)},
  MRNUMBER = {2210092},
MRREVIEWER = {B\'eatrice\ M.\ Rivi\`ere},
       DOI = {10.1142/S0218202506001169},
}

@article {bkp2008,
    AUTHOR = {Bartels, S\"oren and Ko, Joy and Prohl, Andreas},
     TITLE = {Numerical analysis of an explicit approximation scheme for the
              {L}andau-{L}ifshitz-{G}ilbert equation},
   JOURNAL = {Math. Comp.},
  FJOURNAL = {Mathematics of Computation},
    VOLUME = {77},
      YEAR = {2008},
    NUMBER = {262},
     PAGES = {773--788},
      ISSN = {0025-5718,1088-6842},
   MRCLASS = {82D40 (35K65 35Q60 65M12)},
  MRNUMBER = {2373179},
MRREVIEWER = {Yaniv\ Almog},
       DOI = {10.1090/S0025-5718-07-02079-0},
}

@article {akt2012,
    AUTHOR = {Alouges, F. and Kritsikis, E. and Toussaint, J.-C.},
     TITLE = {A convergent finite element approximation for Landau-Lifshitz-Gilbert equation},
   JOURNAL = {Physica B},
    VOLUME = {407},
      YEAR = {2012},
    NUMBER = {5},
     PAGES = {1345--1349},
      ISSN = {0898-1221,1873-7668},
   MRCLASS = {65M60 (82D40)},
  MRNUMBER = {3766546},
       DOI = {10.1016/j.camwa.2017.11.028},
}

@article {bsffgppr2014,
    AUTHOR = {Bruckner, F. and Suess, D. and Feischl, M. and F\"uhrer, T.
              and Goldenits, P. and Page, M. and Praetorius, D. and Ruggeri,
              M.},
     TITLE = {Multiscale modeling in micromagnetics: existence of solutions
              and numerical integration},
   JOURNAL = {Math. Models Methods Appl. Sci.},
  FJOURNAL = {Mathematical Models and Methods in Applied Sciences},
    VOLUME = {24},
      YEAR = {2014},
    NUMBER = {13},
     PAGES = {2627--2662},
      ISSN = {0218-2025,1793-6314},
   MRCLASS = {78A25 (35Q60 65N30)},
  MRNUMBER = {3260281},
       DOI = {10.1142/S0218202514500328},
}

@article {mrs2018,
    AUTHOR = {Praetorius, Dirk and Ruggeri, Michele and Stiftner, Bernhard},
     TITLE = {Convergence of an implicit-explicit midpoint scheme for
              computational micromagnetics},
   JOURNAL = {Comput. Math. Appl.},
  FJOURNAL = {Computers \& Mathematics with Applications. An International
              Journal},
    VOLUME = {75},
      YEAR = {2018},
    NUMBER = {5},
     PAGES = {1719--1738},
      ISSN = {0898-1221,1873-7668},
   MRCLASS = {65M60 (82D40)},
  MRNUMBER = {3766546},
       DOI = {10.1016/j.camwa.2017.11.028},
}

@article {ruggeri2022,
    AUTHOR = {Ruggeri, Michele},
     TITLE = {Numerical analysis of the {L}andau-{L}ifshitz-{G}ilbert
              equation with inertial effects},
   JOURNAL = {ESAIM Math. Model. Numer. Anal.},
  FJOURNAL = {ESAIM. Mathematical Modelling and Numerical Analysis},
    VOLUME = {56},
      YEAR = {2022},
    NUMBER = {4},
     PAGES = {1199--1222},
      ISSN = {2822-7840,2804-7214},
   MRCLASS = {65M60 (35K61 65M12 78M10)},
  MRNUMBER = {4444535},
       DOI = {10.1051/m2an/2022043},
}

@article {ds2014,
    AUTHOR = {Dumas, Eric and Sueur, Franck},
     TITLE = {On the weak solutions to the {M}axwell-{L}andau-{L}ifshitz
              equations and to the {H}all-magneto-hydrodynamic equations},
   JOURNAL = {Comm. Math. Phys.},
  FJOURNAL = {Communications in Mathematical Physics},
    VOLUME = {330},
      YEAR = {2014},
    NUMBER = {3},
     PAGES = {1179--1225},
      ISSN = {0010-3616,1432-0916},
   MRCLASS = {35Q60 (35D30 82D10)},
  MRNUMBER = {3227510},
MRREVIEWER = {Shintaro\ Kondo},
       DOI = {10.1007/s00220-014-1924-1},
}

@book {prohl2001,
    AUTHOR = {Prohl, Andreas},
     TITLE = {Computational micromagnetism},
    SERIES = {Advances in Numerical Mathematics},
 PUBLISHER = {Teubner, Stuttgart},
      YEAR = {2001},
     PAGES = {xviii+304},
      ISBN = {3-519-00358-9},
   MRCLASS = {82D40 (65N30 74F15 82-02 82C80 82D30)},
  MRNUMBER = {1885923},
MRREVIEWER = {Thomas\ P.\ Svobodny},
       DOI = {10.1007/978-3-663-09498-2},
}

@article {cimrak2005,
    AUTHOR = {Cimr\'ak, Ivan},
     TITLE = {Error estimates for a semi-implicit numerical scheme solving
              the {L}andau-{L}ifshitz equation with an exchange field},
   JOURNAL = {IMA J. Numer. Anal.},
  FJOURNAL = {IMA Journal of Numerical Analysis},
    VOLUME = {25},
      YEAR = {2005},
    NUMBER = {3},
     PAGES = {611--634},
      ISSN = {0272-4979,1464-3642},
   MRCLASS = {82D40 (78M10 82-04)},
  MRNUMBER = {2153750},
       DOI = {10.1093/imanum/dri011},
       URL = {https://doi.org/10.1093/imanum/dri011},
}

@article {cimrak2007,
    AUTHOR = {Cimr\'ak, Ivan},
     TITLE = {Existence, regularity and local uniqueness of the solutions to
              the {M}axwell-{L}andau-{L}ifshitz system in three dimensions},
   JOURNAL = {J. Math. Anal. Appl.},
  FJOURNAL = {Journal of Mathematical Analysis and Applications},
    VOLUME = {329},
      YEAR = {2007},
    NUMBER = {2},
     PAGES = {1080--1093},
      ISSN = {0022-247X,1096-0813},
   MRCLASS = {35Q60 (35B10 35B65 78A25 82D40)},
  MRNUMBER = {2296907},
MRREVIEWER = {Franck\ Sueur},
       DOI = {10.1016/j.jmaa.2006.06.080},
}

@article {csg1998,
    AUTHOR = {Chen, Yunmei and Ding, Shijin and Guo, Boling},
     TITLE = {Partial regularity for two-dimensional {L}andau-{L}ifshitz
              equations},
   JOURNAL = {Acta Math. Sinica (N.S.)},
  FJOURNAL = {Acta Mathematica Sinica. New Series},
    VOLUME = {14},
      YEAR = {1998},
    NUMBER = {3},
     PAGES = {423--432},
      ISSN = {1000-9574},
   MRCLASS = {35Q55 (58E50)},
  MRNUMBER = {1693128},
MRREVIEWER = {Yisong\ Yang},
       DOI = {10.1007/BF02580447},
}

@article {cf2001,
    AUTHOR = {Carbou, Gilles and Fabrie, Pierre},
     TITLE = {Regular solutions for {L}andau-{L}ifschitz equation in a
              bounded domain},
   JOURNAL = {Differ. Integral Equ.},
  FJOURNAL = {Differential and Integral Equations. An International Journal
              for Theory \& Applications},
    VOLUME = {14},
      YEAR = {2001},
    NUMBER = {2},
     PAGES = {213--229},
      ISSN = {0893-4983},
   MRCLASS = {35Q60 (35A07 35K20 82D40)},
  MRNUMBER = {1797387},
MRREVIEWER = {Bruno\ Scheurer},
}

@article {ft2017b,
    AUTHOR = {Feischl, Michael and Tran, Thanh},
     TITLE = {Existence of regular solutions of the
              {L}andau-{L}ifshitz-{G}ilbert equation in 3{D} with natural
              boundary conditions},
   JOURNAL = {SIAM J. Math. Anal.},
  FJOURNAL = {SIAM Journal on Mathematical Analysis},
    VOLUME = {49},
      YEAR = {2017},
    NUMBER = {6},
     PAGES = {4470--4490},
      ISSN = {0036-1410,1095-7154},
   MRCLASS = {35Q60 (35K55 35R60 60H15 65M12 82D40)},
  MRNUMBER = {3723324},
MRREVIEWER = {Peter\ Bernard\ Weichman},
       DOI = {10.1137/16M1103427},
}

@article {ft2017,
    AUTHOR = {Feischl, Michael and Tran, Thanh},
     TITLE = {The eddy current--{LLG} equations: {FEM}-{BEM} coupling and a
              priori error estimates},
   JOURNAL = {SIAM J. Numer. Anal.},
  FJOURNAL = {SIAM Journal on Numerical Analysis},
    VOLUME = {55},
      YEAR = {2017},
    NUMBER = {4},
     PAGES = {1786--1819},
      ISSN = {0036-1429,1095-7170},
   MRCLASS = {65M60 (35Q41 35Q60 35R60 60H15 65M12 65M15 82D45)},
  MRNUMBER = {3679317},
       DOI = {10.1137/16M1065161},
}

@article {part1,
    AUTHOR = {Ald{\'e}, Michele and Feischl, Michael and Praetorius, Dirk},
     TITLE = {BDF2-type integrator for Landau-Lifshitz-Gilbert equation in micromagnetics: unconditional weak convergence to weak solutions},
   JOURNAL = {Math. Comp., in print},
      YEAR = {2026},
}

@article {part3,
    AUTHOR = {Ald{\'e}, Michele and Feischl, Michael and Praetorius, Dirk},
     TITLE = {BDF2-type integrator for Landau-Lifshitz-Gilbert equation in micromagnetics: implicit-explicit time-stepping and full effective field},
   JOURNAL = {work in progress},
      YEAR = {2026},
}

@article {bp2006,
    AUTHOR = {Bartels, S\"oren and Prohl, Andreas},
     TITLE = {Convergence of an implicit finite element method for the
              {L}andau-{L}ifshitz-{G}ilbert equation},
   JOURNAL = {SIAM J. Numer. Anal.},
  FJOURNAL = {SIAM Journal on Numerical Analysis},
    VOLUME = {44},
      YEAR = {2006},
    NUMBER = {4},
     PAGES = {1405--1419},
      ISSN = {0036-1429,1095-7170},
   MRCLASS = {65M60 (82D40)},
  MRNUMBER = {2257110},
MRREVIEWER = {Anne\ Nouri},
       DOI = {10.1137/050631070},
}

@article {dppr2023,
    AUTHOR = {Di Fratta, Giovanni and Pfeiler, Carl-Martin and Praetorius,
              Dirk and Ruggeri, Michele},
     TITLE = {The mass-lumped midpoint scheme for computational
              micromagnetics: {N}ewton linearization and application to
              magnetic skyrmion dynamics},
   JOURNAL = {Comput. Methods Appl. Math.},
  FJOURNAL = {Computational Methods in Applied Mathematics},
    VOLUME = {23},
      YEAR = {2023},
    NUMBER = {1},
     PAGES = {145--175},
      ISSN = {1609-4840,1609-9389},
   MRCLASS = {65M60 (35K55 65M12 65M22 65Z05)},
  MRNUMBER = {4529401},
       DOI = {10.1515/cmam-2022-0060},
}

@article {hpprss2019,
    AUTHOR = {Hrkac, Gino and Pfeiler, Carl-Martin and Praetorius, Dirk and
              Ruggeri, Michele and Segatti, Antonio and Stiftner, Bernhard},
     TITLE = {Convergent tangent plane integrators for the simulation of
              chiral magnetic skyrmion dynamics},
   JOURNAL = {Adv. Comput. Math.},
  FJOURNAL = {Advances in Computational Mathematics},
    VOLUME = {45},
      YEAR = {2019},
    NUMBER = {3},
     PAGES = {1329--1368},
      ISSN = {1019-7168,1572-9044},
   MRCLASS = {65M60 (35K55 35Q60 65M12 78M10)},
  MRNUMBER = {3955721},
       DOI = {10.1007/s10444-019-09667-z},
}

@article {akst2014,
    AUTHOR = {Alouges, Fran\c cois and Kritsikis, Evaggelos and Steiner,
              Jutta and Toussaint, Jean-Christophe},
     TITLE = {A convergent and precise finite element scheme for
              {L}andau-{L}ifschitz-{G}ilbert equation},
   JOURNAL = {Numer. Math.},
  FJOURNAL = {Numerische Mathematik},
    VOLUME = {128},
      YEAR = {2014},
    NUMBER = {3},
     PAGES = {407--430},
      ISSN = {0029-599X,0945-3245},
   MRCLASS = {65M60 (35K55 35Q60 65M12)},
  MRNUMBER = {3268842},
MRREVIEWER = {Nicolae\ Pop},
       DOI = {10.1007/s00211-014-0615-3},
}

@article {dpprs2020,
    AUTHOR = {Di Fratta, Giovanni and Pfeiler, Carl-Martin and Praetorius,
              Dirk and Ruggeri, Michele and Stiftner, Bernhard},
     TITLE = {Linear second-order {IMEX}-type integrator for the (eddy
              current) {L}andau-{L}ifshitz-{G}ilbert equation},
   JOURNAL = {IMA J. Numer. Anal.},
  FJOURNAL = {IMA Journal of Numerical Analysis},
    VOLUME = {40},
      YEAR = {2020},
    NUMBER = {4},
     PAGES = {2802--2838},
      ISSN = {0272-4979,1464-3642},
   MRCLASS = {65M60 (65M12 78M10)},
  MRNUMBER = {4167063},
       DOI = {10.1093/imanum/drz046},
}

@article {bpp2015,
    AUTHOR = {Ba\v nas, L. and Page, M. and Praetorius, D.},
     TITLE = {A convergent linear finite element scheme for the
              {M}axwell-{L}andau-{L}ifshitz-{G}ilbert equations},
   JOURNAL = {Electron. Trans. Numer. Anal.},
  FJOURNAL = {Electronic Transactions on Numerical Analysis},
    VOLUME = {44},
      YEAR = {2015},
     PAGES = {250--270},
      ISSN = {1068-9613},
   MRCLASS = {65M60 (65M12)},
  MRNUMBER = {3345799},
MRREVIEWER = {Pedro\ Gonz\'alez-Casanova},
}

@article {bppr2014,
    AUTHOR = {Ba\v nas, Lubom\'ir and Page, Marcus and Praetorius, Dirk and
              Rochat, Jonathan},
     TITLE = {A decoupled and unconditionally convergent linear {FEM}
              integrator for the {L}andau-{L}ifshitz-{G}ilbert equation with
              magnetostriction},
   JOURNAL = {IMA J. Numer. Anal.},
  FJOURNAL = {IMA Journal of Numerical Analysis},
    VOLUME = {34},
      YEAR = {2014},
    NUMBER = {4},
     PAGES = {1361--1385},
      ISSN = {0272-4979,1464-3642},
   MRCLASS = {65M60 (65M12 65M15 78A30)},
  MRNUMBER = {3269429},
MRREVIEWER = {Saulo\ Pomponet\ Oliveira},
       DOI = {10.1093/imanum/drt050},
       URL = {https://doi.org/10.1093/imanum/drt050},
}

@article {dip2020,
    AUTHOR = {Di Fratta, Giovanni and Innerberger, Michael and Praetorius,
              Dirk},
     TITLE = {Weak-strong uniqueness for the {L}andau-{L}ifshitz-{G}ilbert
              equation in micromagnetics},
   JOURNAL = {Nonlinear Anal. Real World Appl.},
  FJOURNAL = {Nonlinear Analysis. Real World Applications. An International
              Multidisciplinary Journal},
    VOLUME = {55},
      YEAR = {2020},
     PAGES = {103122, 13},
      ISSN = {1468-1218,1878-5719},
   MRCLASS = {35Q60 (35A01 35Q82 82D40)},
  MRNUMBER = {4077401},
MRREVIEWER = {Gaetano\ Siciliano},
       DOI = {10.1016/j.nonrwa.2020.103122},
       URL = {https://doi.org/10.1016/j.nonrwa.2020.103122},
}

@article {feischl,
    AUTHOR = {Akrivis, Georgios and Feischl, Michael and Kov\'{a}cs,
              Bal\'{a}zs and Lubich, Christian},
     TITLE = {Higher-order linearly implicit full discretization of the
              {L}andau-{L}ifshitz-{G}ilbert equation},
   JOURNAL = {Math. Comp.},
  FJOURNAL = {Mathematics of Computation},
    VOLUME = {90},
      YEAR = {2021},
    NUMBER = {329},
     PAGES = {995--1038},
      ISSN = {0025-5718,1088-6842},
   MRCLASS = {65M60 (35Q60 65L06 65M12 65M15)},
  MRNUMBER = {4232216},
MRREVIEWER = {Hamdullah\ Y\"{u}cel},
       DOI = {10.1090/mcom/3597},
       URL = {https://doi.org/10.1090/mcom/3597},
}

@book {hairer,
    AUTHOR = {Hairer, E. and Wanner, G.},
     TITLE = {Solving ordinary differential equations. {II}},
    SERIES = {Springer Series in Computational Mathematics},
    VOLUME = {14},
   EDITION = {revised},
      NOTE = {Stiff and differential-algebraic problems},
 PUBLISHER = {Springer, Berlin},
      YEAR = {2010},
     PAGES = {xvi+614},
      ISBN = {978-3-642-05220-0},
   MRCLASS = {65-02 (65Lxx)},
  MRNUMBER = {2657217},
       DOI = {10.1007/978-3-642-05221-7},
}

@article {Alouges_Soyeur,
    AUTHOR = {Alouges, Fran\c{c}ois and Soyeur, Alain},
     TITLE = {On global weak solutions for {L}andau-{L}ifshitz equations:
              existence and nonuniqueness},
   JOURNAL = {Nonlinear Anal.},
  FJOURNAL = {Nonlinear Analysis. Theory, Methods \& Applications. An
              International Multidisciplinary Journal},
    VOLUME = {18},
      YEAR = {1992},
    NUMBER = {11},
     PAGES = {1071--1084},
      ISSN = {0362-546X,1873-5215},
   MRCLASS = {35Q99 (35D05 58E20 82D40)},
  MRNUMBER = {1167422},
MRREVIEWER = {Kotik\ K.\ Lee},
       DOI = {10.1016/0362-546X(92)90196-L},
       URL = {https://doi.org/10.1016/0362-546X(92)90196-L},
}

@article {Alouges_2008,
    AUTHOR = {Alouges, Fran\c{c}ois},
     TITLE = {A new finite element scheme for {L}andau-{L}ifchitz equations},
   JOURNAL = {Discrete Contin. Dyn. Syst. Ser. S},
  FJOURNAL = {Discrete and Continuous Dynamical Systems. Series S},
    VOLUME = {1},
      YEAR = {2008},
    NUMBER = {2},
     PAGES = {187--196},
      ISSN = {1937-1632,1937-1179},
   MRCLASS = {65M60 (35K55 82B80 82D40)},
  MRNUMBER = {2379897},
MRREVIEWER = {Etienne\ Emmrich},
       DOI = {10.3934/dcdss.2008.1.187},
       URL = {https://doi.org/10.3934/dcdss.2008.1.187},
}

@article {Spin-polarized,
    AUTHOR = {Abert, Claas and Hrkac, Gino and Page, Marcus and Praetorius,
              Dirk and Ruggeri, Michele and Suess, Dieter},
     TITLE = {Spin-polarized transport in ferromagnetic multilayers: an
              unconditionally convergent {FEM} integrator},
   JOURNAL = {Comput. Math. Appl.},
  FJOURNAL = {Computers \& Mathematics with Applications. An International
              Journal},
    VOLUME = {68},
      YEAR = {2014},
    NUMBER = {6},
     PAGES = {639--654},
      ISSN = {0898-1221,1873-7668},
   MRCLASS = {65M60 (35B45 35D30 35K51 35Q60 65M12 82D40)},
  MRNUMBER = {3253344},
MRREVIEWER = {Marco\ Picasso},
       DOI = {10.1016/j.camwa.2014.07.010},
       URL = {https://doi.org/10.1016/j.camwa.2014.07.010},
}

@phdthesis{goldenits2012konvergente,
  title={Konvergente numerische Integration der Landau-Lifshitz-Gilbert Gleichung},
  author={Goldenits, Petra},
  year={2012},
  school={TU Wien, Institute of Analysis and Scientific Computing}
}

@book {brennerscott,
    AUTHOR = {Brenner, Susanne C. and Scott, L. Ridgway},
     TITLE = {The mathematical theory of finite element methods},
    SERIES = {Texts in Applied Mathematics},
    VOLUME = {15},
   EDITION = {Third},
 PUBLISHER = {Springer, New York},
      YEAR = {2008},
     PAGES = {xviii+397},
      ISBN = {978-0-387-75933-3},
   MRCLASS = {65-01 (65-02)},
  MRNUMBER = {2373954},
       DOI = {10.1007/978-0-387-75934-0},
}

@article {davoli2020,
    AUTHOR = {Davoli, Elisa and Di Fratta, Giovanni and Praetorius, Dirk and
              Ruggeri, Michele},
     TITLE = {Micromagnetics of thin films in the presence of
              {D}zyaloshinskii-{M}oriya interaction},
   JOURNAL = {Math. Models Methods Appl. Sci.},
  FJOURNAL = {Mathematical Models and Methods in Applied Sciences},
    VOLUME = {32},
      YEAR = {2022},
    NUMBER = {5},
     PAGES = {911--939},
      ISSN = {0218-2025,1793-6314},
   MRCLASS = {78A30 (35C20 35Q51 49J45 49S05 65M12 65M60 82D40)},
  MRNUMBER = {4430360},
       DOI = {10.1142/S0218202522500208},
}

@article {AnLiSun2024,
    AUTHOR = {An, Rong and Li, Yonglin and Sun, Weiwei},
     TITLE = {Optimal error analysis of the normalized tangent plane {FEM}
              for {L}andau-{L}ifshitz-{G}ilbert equation},
   JOURNAL = {IMA J. Numer. Anal.},
  FJOURNAL = {IMA Journal of Numerical Analysis},
    VOLUME = {45},
      YEAR = {2025},
    NUMBER = {5},
     PAGES = {3109--3137},
      ISSN = {0272-4979,1464-3642},
   MRCLASS = {65M12 (65M60)},
  MRNUMBER = {4966454},
       DOI = {10.1093/imanum/drae084},
       URL = {https://doi.org/10.1093/imanum/drae084},
}

@article {Dahlquist,
    AUTHOR = {Dahlquist, Germund},
     TITLE = {{$G$}-stability is equivalent to {$A$}-stability},
   JOURNAL = {BIT},
  FJOURNAL = {BIT. Nordisk Tidskrift for Informationsbehandling (BIT)},
    VOLUME = {18},
      YEAR = {1978},
    NUMBER = {4},
     PAGES = {384--401},
      ISSN = {0006-3835},
   MRCLASS = {65L05 (65J10)},
  MRNUMBER = {520750},
MRREVIEWER = {Ramon\ E.\ Moore},
       DOI = {10.1007/BF01932018},
       URL = {https://doi.org/10.1007/BF01932018},
}

@article{baiocchi_crouzeix,
author = {Baiocchi, Claudio and Crouzeix, Michel},
title = {On the equivalence of A-stability and G-stability},
year = {1989},
issue_date = {February 1989},
publisher = {Elsevier Science Publishers B. V.},
address = {NLD},
volume = {5},
number = {1–2},
issn = {0168-9274},
journal = {Appl. Numer. Math.},
month = feb,
pages = {10–22},
numpages = {13},
DOI = {10.1016/0168-9274(89)90020-2},
}

@article {nevanlinna_odeh,
    AUTHOR = {Nevanlinna, Olavi and Odeh, F.},
     TITLE = {Multiplier techniques for linear multistep methods},
   JOURNAL = {Numer. Funct. Anal. Optim.},
  FJOURNAL = {Numerical Functional Analysis and Optimization. An
              International Journal},
    VOLUME = {3},
      YEAR = {1981},
    NUMBER = {4},
     PAGES = {377--423},
      ISSN = {0163-0563,1532-2467},
   MRCLASS = {65L05},
  MRNUMBER = {636736},
MRREVIEWER = {Peter\ Alfeld},
       DOI = {10.1080/01630568108816097},
       URL = {https://doi.org/10.1080/01630568108816097},
}

@book{ciarlet,
author = {Ciarlet, Philippe G.},
title = {The finite element method for elliptic problems},
publisher = {Society for Industrial and Applied Mathematics},
year = {2002},
doi = {10.1137/1.9780898719208},
address = {},
edition   = {},
%URL = {https://epubs.siam.org/doi/abs/10.1137/1.9780898719208},
%eprint = {https://epubs.siam.org/doi/pdf/10.1137/1.9780898719208}
}

@book{thomee2013galerkin,
  author    = {Vidar Thom{\'e}e},
  title     = {Galerkin finite element methods for parabolic problems},
  edition   = {2},
  series    = {Springer Series in Computational Mathematics},
  volume    = {25},
  publisher = {Springer},
  address   = {Berlin, Heidelberg},
  year      = {2006},
  isbn      = {978-3-540-33122-3},
  doi       = {10.1007/3-540-33122-0}
}

@misc{guo2026NSLLG,
      title={Compressible Navier-Stokes-Landau-Lifshitz-Gilbert system: derivations and well-posedness}, 
      author={Boling Guo and Ning Jiang and Hui Liu and Yi-Long Luo and Teng-Fei Zhang},
      year={2026},
      eprint={2604.20265},
      archivePrefix={arXiv},
      primaryClass={math.AP},
      %url={https://arxiv.org/abs/2604.20265}, 
}

@misc{Li2026,
      title={Stability and error analysis of fully discrete original energy-dissipative and length-preserving scheme for the Landau-Lifshitz-Gilbert equation}, 
      author={Binghong Li and Xiaoli Li and Cheng Wang and Jiang Yang},
      year={2026},
      eprint={2602.07571},
      archivePrefix={arXiv},
      primaryClass={math.NA},
      %url={https://arxiv.org/abs/2602.07571}, 
}

@misc{XieWang2025,
      title={Error analysis of third-Order in time and fourth-order linear finite difference scheme for Landau-Lifshitz-Gilbert equation under large damping parameters}, 
      author={Changjian Xie and Cheng Wang},
      year={2025},
      eprint={2510.25172},
      archivePrefix={arXiv},
      primaryClass={math.NA},
      %url={https://arxiv.org/abs/2510.25172}, 
}

@misc{Xie2025,
      title={A novel third-order accurate and stable scheme for micromagnetic simulations}, 
      author={Changjian Xie},
      year={2025},
      eprint={2511.21047},
      archivePrefix={arXiv},
      primaryClass={math-ph},
      %url={https://arxiv.org/abs/2511.21047}, 
}

@article {abp24,
    AUTHOR = {Akrivis, Georgios and Bartels, S\"oren and Palus, Christian},
     TITLE = {Quadratic constraint consistency in the projection-free
              approximation of harmonic maps and bending isometries},
   JOURNAL = {Math. Comp.},
  FJOURNAL = {Mathematics of Computation},
    VOLUME = {94},
      YEAR = {2025},
    NUMBER = {355},
     PAGES = {2251--2269},
      ISSN = {0025-5718,1088-6842},
   MRCLASS = {65M22 (35J50 35J57 35J62 65M60)},
  MRNUMBER = {4919561},
       DOI = {10.1090/mcom/4035},
       URL = {https://doi.org/10.1090/mcom/4035},
}
